\numberwithin{equation}{section}
\newcommand{\ring}[1]{\ensuremath{\mathbb{#1}}}
\newcommand\minus{\smallsetminus}
\newcommand\<{\langle}
\renewcommand\>{\rangle}
\newcommand\CC{\ring{C}}
\newcommand\NN{\ring{N}}
\newcommand\QQ{\ring{Q}}
\newcommand\RR{\ring{R}}
\newcommand\VV{{\mathscr V}}
\newcommand\ZZ{\ring{Z}}
\newcommand\cB{{\mathscr B}}
\newcommand\cC{{\mathscr C}}
\newcommand\cK{{\mathscr K}}
\newcommand\cN{{\mathscr N}}
\newcommand\cS{{\mathscr S}}
\newcommand\cT{{\mathscr T}}
\newcommand\cU{{\mathscr U}}
\newcommand\del{\partial}
\newcommand{\HA}{H_A(\beta)}
\newcommand{\HrA}{H_{\rho(A)}(\beta_0,\beta)}
\newcommand{\be}{{\boldsymbol{\varepsilon}}}
\newcommand\vol{{\rm vol}}
\newcommand\rank{{\rm rank}}
\newcommand\conv{{\rm conv}}
\newcommand\tp{{\rm top}}
\newcommand\supp{{\rm supp}}
\newcommand\nsup{{\rm nsupp}}
\newcommand\nsupp{{\rm nsupp}}
\newcommand{\inw}{{\rm in}_w}
\newcommand{\inow}{{\rm in}_{(0,w)}}
\newcommand{\inww}{{\rm in}_{(-w,w)}}
\newcommand{\Fw}{{\mathscr{N}}_w}
\newtheorem{theorem}{Theorem}[section]
\newtheorem{lemma}[theorem]{Lemma}
\newtheorem{corollary}[theorem]{Corollary}
\newtheorem{proposition}[theorem]{Proposition}
\newtheorem{definition}[theorem]{Definition}
\newtheorem{convention}[theorem]{Convention}
\newtheorem{notation}[theorem]{Notation}
\theoremstyle{definition}
\newtheorem{remark}[theorem]{Remark}
\newtheorem{example}[theorem]{Example}
\begin{document}

\title{Nilsson solutions for irregular $A$-hypergeometric systems}

\author{Alicia Dickenstein}
\address[AD]{Departamento de Matem\'atica,
FCEN, Univ. de Buenos Aires, Buenos Aires, Argentina.}
\email{alidick@dm.uba.ar}
\thanks{AD was 
partially supported by UBACYT 20020100100242, CONICET PIP 112-200801-00483 
and ANPCyT PICT 2008-0902, Argentina}

\author{Federico Nicol\'as Mart\'{\i}nez}
\address[FM]
{Departamento de Matem\'atica, 
FCEN, Univ. de Buenos Aires,  Buenos Aires, Argentina.}
\email{fnmartin@dm.uba.ar}
\thanks{FNM was supported by a doctoral grant from CONICET}

\author{Laura Felicia Matusevich}
\address[LFM]{Department of Mathematics,
Texas A\&M University, College Station, TX, USA.}
\email{laura@math.tamu.edu}
\thanks{LFM was partially supported by NSF Grant DMS 0703866 
and a Sloan Research Fellowship}

%
%

\begin{abstract}
We study the solutions of irregular $A$-hyper\-geo\-metric systems
that are cons\-truc\-ted from Gr\"obner degenerations 
with respect to 
generic positive weight vectors. These are 
formal logarithmic Puiseux series that belong to 
explicitly described Nilsson rings, and are therefore
called (formal) Nilsson series. When the weight vector is a
perturbation of $(1,\dots,1)$, these series converge and provide a
basis for the (multivalued) holomorphic hyper\-geo\-metric functions in a specific open
subset of $\CC^n$. Our results are more explicit when the parameters
are generic or when the solutions studied are logarithm-free.
We also give an alternative proof of a result of
Schulze and Walther that inhomogeneous $A$-hyper\-geo\-metric systems have
irregular singularities. 
\end{abstract}

\keywords{$A$-Hypergeometric functions, Irregular holonomic
  $D$-modules, Formal Nilsson series, 
Gr\"obner degenerations in the Weyl algebra}

\subjclass[2010]{33C70, 14F10, 14M25, 32C38}

\maketitle

\pagestyle{myheadings}
\markleft{Alicia Dickenstein, Federico Nicol\'as Mart{\'{\i}}nez 
and Laura Felicia Matusevich}

%
%

\section{Introduction}
\label{sec:intro}

The Frobenius method is a symbolic procedure for 
solving a linear ordinary differential 
equation in a neighborhood of a regular singular point. 
The solutions are represented as convergent
logarithmic Puiseux series that belong to the Nilsson class.
In the multivariate case, a direct generalization of the Frobenius
method, called the \emph{canonical series algorithm}, was introduced
by Saito, Sturmfels and Takayama in Chapter~2.5 of~\cite{SST}, based on
Gr\"obner degenerations in the Weyl algebra $D$. When applied
to a regular holonomic left $D$-ideal, 
this procedure yields a basis of the
solution space. The basis elements belong to an explicitly described
Nilsson ring, and are therefore called
\emph{Nilsson series}, or \emph{Nilsson solutions}.
Each Nilsson ring is constructed using a weight vector; the
choice of weight vector is a way of determining the common
domain of convergence of the corresponding solutions. 
The canonical series procedure requires a regular holonomic input; 
although one can run this algorithm on
holonomic left $D$-ideals that have irregular
singularities, there is no guarantee that the output 
series converge, or even that the  correct number of basis elements
will be produced. 

In this article, we study the solutions of 
the $A$-hyper\-geo\-metric systems introduced by
Gelfand, Graev, Kapranov and Zelevinsky~\cite{GGZ,GKZ}. 

\begin{definition}\label{def:A-hyp}
Denote by $D$ the Weyl algebra on $x_1,\dots,x_n$ and $\del_1,\dots,\del_n$,
where $\del_i$ stands for the partial derivative with respect to $x_i$.
Let $A = [a_{ij}] \in \ZZ^{d \times n}$ whose rows $\ZZ$-span $\ZZ^d$, 
and let $\beta \in \CC^d$.
The \textbf{$A$-hyper\-geo\-metric system with parameter $\beta$} is the left
$D$-ideal 
\[
H_A(\beta) =  I_A  + 
   \< E_1 - \beta_1,\dots,E_d -\beta_d\> \subset D,
\]
where $E_i = \sum_{j=1}^n a_{ij} x_j \del_j$, $1 \leq i \leq d$, and $I_A$
denotes the \textbf{toric ideal}
\[
I_A = \< \del^u - \del^v \mid A\cdot u = A\cdot v\> \subseteq \CC[\del].
\]
\end{definition}

These left $D$-ideals are always holonomic~\cite{Ado, GKZ}.
It is known that $\HA$ is regular holonomic
if and only if the $\QQ$-rowspan of the matrix $A$ contains the vector
$(1,\dots, 1)$.
The if direction was proved by Hotta in his work on equivariant $D$-modules~\cite{H}; 
Saito, Sturmfels and Takayama gave a partial converse, see
Theorem~2.4.11 in~\cite{SST}, where it is assumed that the parameter $\beta$ is
generic. Their proof uses Gr\"obner methods in $D$.

A different strategy 
to show that a $D$-ideal is not regular holonomic
is to prove that it has \emph{slopes}.
The analytic slopes of a
$D$-module were introduced by Meb\-khout in~\cite{mebkhout}, while an
algebraic version was given by Laurent~\cite{laurent}. These authors
have shown that the analytic and algebraic slopes of a $D$-module
along a smooth hypersurface agree~\cite{laurent-mebkhout}. From a
computational perspective, Assi, Castro--Jim\'enez and Granger gave a
Gr\"obner basis algorithm to find algebraic slopes~\cite{GBforslopes}.
There has been an effort to compute the (algebraic) slopes of $\HA$ along a
coordinate hypersurface.
In the cases $d=1$ and $n-d=1$, these slopes were determined
by Castro--Jim\'enez and Takayama~\cite{CT}, and 
Hartillo--Hermoso~\cite{Hartillo,Hartillo2}. More generally,
Schulze and Walther~\cite{SW} have calculated the slopes of $\HA$
under the assumption that the cone over the columns of $A$
contains no lines. Such a cone is called \emph{strongly convex}. The
fact that slopes of $\HA$ always exist when the vector $(1, \dots, 1)$ does not
belong to the rowspan of $A$, implies that $\HA$ has irregular singularities.
Thus, Corollary~3.16 in~\cite{SW} gives a converse for Hotta's regularity
theorem in the strongly convex case. We give an alternative proof of
this converse here, 
by extending ideas of Saito, Sturmfels and Takayama.
The main technical obstacle to overcome
is the potential existence of logarithmic hyper\-geo\-metric series.

Further insight into the solutions of hyper\-geo\-metric system comes
from the analytic approach taken up by Castro--Jim\'enez and
Fern\'andez--Fern\'andez~\cite{CF,FF}, who studied the Gevrey
filtration on the 
irregularity complex of an $A$-hy\-per\-geo\-me\-tric system. Since
formal series solutions of irregular systems need not converge, a
study of the Gevrey filtration provides information on how far such
series are from convergence.

Even though the regularity of $\HA$ is independent of $\beta$,
in practice, assuming that the parameters are generic makes a difference. 
In this case, Ohara and Takayama~\cite{OT}
show that the method of canonical series 
for a weight vector which is a perturbation of $(1,\dots,1)$ 
produces a basis for the solution space of $\HA$ 
consisting of (convergent) Nilsson series that contain no logarithms.
In that work (and many others, such as~\cite{Ado, MMW, SST})
an important role is played by a related hyper\-geo\-metric
system that does have regular singularities, called the 
\emph{homogenized system}. One of our main goals is to precisely explain
the relationship between the solutions of $\HA$ and those of its
homogenization.

In order to produce a basis of solutions of
$\HA$ when $\beta$ is not
generic, logarithmic series cannot be avoided, even in
the regular case. Dealing with logarithmic solutions of $\HA$
poses technical challenges that we resolve here, allowing us
to lift the genericity hypotheses from the results of Ohara and Takayama:
running the canonical series algorithm on $\HA$ with weight 
vector (a perturbation of) $(1,\dots,1)$ always produces a basis of
(convergent) Nilsson solutions of $\HA$, if the cone
spanned by the columns of $A$ 
is strongly convex. 
On the other hand, formal solutions of irregular hyper\-geo\-metric
systems that are not 
Nilsson series need to be considered, even in one variable (see, for
instance,~\cite{cope}).

This article is organized as follows. In Section~\ref{sec:prelim} we
introduce the formal basic Nilsson solutions of
$\HA$ in the direction of a weight vector $w$
(Definition~\ref{defformal}), whose linear span is denoted by $\Fw(\HA)$.
Section~\ref{sec:invert-der} explains the relationship between 
$\Fw(\HA)$ and the solution space of an associated regular holonomic
hyper\-geo\-metric system. A linear map $\rho$ between these spaces is
constructed in  
Definition~\ref{def:homog-sol} and
Proposition~\ref{propo:Nilsson-to-Nilsson}. The main result in
Section~\ref{sec:invert-der} is Theorem~\ref{thm:image of rho}, which
states that $\rho$ is injective, and describes its image. 

In Section~\ref{sec:generic-parameters} we restrict
our attention to generic parameter vectors $\beta$,
and give a
combinatorial formula for the dimension of $\Fw(\HA)$ 
(Theorem~\ref{coro:dimnilsson}). This formula is reminiscent of the
multiplicities 
of the $L$-characteristic cycles of the $A$-hyper\-geo\-metric $D$-module
$D/\HA$ computed in~\cite{SW}, but note that these authors use
different weights in the Weyl algebra than we do: while we focus on
weight vectors of the form $(-w,w)$, Schulze and Walther work with
$(u,v)$ such that $u+v$ is a positive multiple of $(1,\dots,1)$.

In Section~\ref{sec:logarithm free} 
we lift the genericity assumption from $\beta$,
but study only $A$-hyper\-geo\-metric Nilsson series that contain no
logarithms, and show that they arise from logarithm-free solutions of
the associated regular holonomic hyper\-geo\-metric system
(Theorem~\ref{teo:map-of-fake-exps}). 
We show in Theorem~\ref{prop:bSaito} that the logarithm-free
basic Nilsson solutions of $\HA$ in the direction of a weight vector
$w$ span the vector space of formal logarithm-free $A$-hyper\-geo\-metric
series in the direction of $w$, proving the formula 
in Display~(7) in~\cite{S}.  

In Section~\ref{sec:convergence} we
analyze the convergence of formal Nilsson
solutions of $A$-hyper\-geo\-metric systems. The main result of that section
is Theorem~\ref{ultimoteorema}, which provides an explicit
construction for a basis of the space of (multivalued) holomorphic
solutions of $\HA$ that 
converge in a specific fixed open set, assuming that the cone spanned
by the columns of $A$ 
is strongly convex. While this was well known in the
regular case, it is a new result in general. 
Finally, under the same strong convexity assumption for $A$, we
use our study of formal $A$-hyper\-geo\-metric Nilsson series in
Section~\ref{sec:Hotta} to (re)prove
that $A$-hyper\-geo\-metric systems arising from inhomogeneous toric
ideals have irregular singularities 
for all parameters (Theorem~\ref{coro:lower rank initial}).

\subsection*{Acknowledgements}

We are very grateful to Christine Berkesch, who proved
Theorem~\ref{thm:rank-lifting} at our request.
Part of this work was
performed while the third author was visiting MSRI in the Fall 2009,
and while the 
three authors met at the National Algebra Meeting in La Falda,
Argentina, in August 2008. We thank the organizers of these events for the
wonderful research atmosphere and hospitality. We are very grateful to
the anonymous referee, whose thoughtful suggestions have improved this article.

\section{Initial ideals and formal Nilsson series}
\label{sec:prelim}

Throughout this article, $A$ is a fixed $d \times n$ integer matrix
whose columns $\ZZ$-span the lattice $\ZZ^d$. 
Given $\beta \in \CC^d$, we consider the $A$-hyper\-geo\-metric system
$\HA$ introduced in Definition~\ref{def:A-hyp}, and
we define and study its Nilsson series solutions associated to a weight
vector $w \in \RR_{\geq 0}^n$. We use the convention that
$\NN = \{0,1,2,\dots\}$.

We work in the Weyl algebra $D$ in $x_1,\dots, x_n$, $\del_1,\dots,\del_n$.
A left $D$-ideal $I$ is said to be \emph{holonomic} if
$\textrm{Ext}^i_D(D/I,D)=0$ for all $i \neq n$. In this case, the 
\emph{holonomic rank} of $I$, denoted by $\rank(I)$, which is by definition 
the dimension of the space of germs of holomorphic solutions of
$I$ near a generic nonsingular point, is finite, by a result of
Kashiwara (see Theorem~1.4.19 in~\cite{SST}).

\begin{definition}
\label{def:weight}
Let $w \in \RR^n$. The $(-w,w)$-\textbf{weight} of 
$x^u\del^v \in D$ is
$ -w \cdot u + w \cdot v$.
\end{definition}

This weight induces a partial order on the monomials in
the Weyl algebra.

\begin{definition}
\label{def:initial-term}
Let $w \in \RR^n$ and $f=\sum_{u,v}c_{u v}x^u\partial^v \in D$.
The \textbf{initial form} $\inww(f)$ of $f \neq 0$ with respect to $(-w,w)$
is the subsum of $f$ consisting of its (nonzero) terms of maximal $(-w,w)$-weight. 
If $I$ is a left $D$-ideal, its \textbf{initial ideal} with respect
to $(-w,w)$ is
\[
\inww(I) = \< \inww(f) \mid f \in I, f \neq 0 \> \subset D.
\] 
\end{definition}

\begin{remark}
\label{remark:initials in the polynomial ring}
We can restrict the ${(-w,w)}$-weight to the monomials in $\CC[\del]
\subset D$; the weight of $\del^v$ reduces to $w \cdot
v$. When $w \in \RR^n_{>0}$, the induced partial order on the monomials in
$\CC[\del]$ has $1$ as the unique smallest monomial.
If $J \subseteq \CC[\del]$ is an ideal, we define the
initial ideal  $\inw(J)$ as the ideal generated by the initial terms
of all non zero polynomials in $J$. 
\end{remark}

The following definition characterizes the weight vectors we
consider in this article. 

\begin{definition}
\label{convention:weight vector}
A vector $w\in \RR_{> 0}^n$ is a \textbf{weight vector} for $\HA$ if
there exists a strongly convex open rational polyhedral cone ${\cC}
\subset \RR^n_{> 0}$, 
with $w \in {\cC}$,
such that, for all $w'\in {\cC}$, we have
\[
\inw(I_A) \, = \, {\rm in}_{w'}(I_A) \quad \text{ and } \quad
\inww(H_A(\beta)) \, = \, {\rm in}_{(-w', w')} (H_A(\beta)).
\]
\end{definition}

Note that the cone $\cC$ is not unique and that
the assumptions on $w$ in Definition~\ref{convention:weight
  vector} imply that $\inw(I_A)$ is a monomial ideal.
It follows from the existence of the Gr\"obner fan (see~\cite{Gfan}
for the commutative version, and~\cite{Gfan-Walg} for the situation in
the Weyl algebra) that weight vectors form an open dense subset of
$\RR_{> 0}^n$. 
For an introduction to the theory of Gr\"obner bases in the Weyl
algebra, we refer to~\cite{SST}; this text is also
our main reference for
background on $D$-modules and hyper\-geo\-metric differential equations.

\begin{remark}
\label{remark:positive-weight-vectors}
Define the $A$-grading on $D$ via $\deg_A(x^u\del^v) = A\cdot(v-u)$. A left
$D$-ideal is $A$-homogeneous if it is generated by $A$-homogeneous
elements of $D$. Note that $\HA$ is $A$-homogeneous.

Suppose that the cone spanned by the columns of $A$ is strongly
convex, that is, there exists $h \in \RR^d$ such
that the vector $h \cdot A$ is coordinatewise positive.
Let $J$ be an $A$-homogeneous left $D$-ideal. 
Then for any $w \in \RR^n$ there exists $w' \in \RR^n_{>0}$ such that 
$\inww(J) = {\rm in}_{(-w',w')}(J)$. 
In fact, given $w \in \RR^n$, it is enough to choose a positive number $\lambda$ such that 
$w' = w + \, \lambda h\cdot A$ is coordinatewise positive. To see
this, note that if $f = \sum c_{uv}
x^u\del^v \in D$ is $A$-homogeneous, the vector
$A\cdot (v-u)$ is independent of $(u,v)$ for all 
$c_{uv} \neq 0$.
As
\[
- w'\cdot u + w' \cdot v = -w \cdot u + w \cdot v + 
\lambda \,[ \,h\cdot A \cdot (v-u) \,], 
\]
using $w'$ instead of $w$ simply adds a
constant to the weights of the terms in $f$, and
then  $\inww(f)={\rm{in}}_{(-w',w')}(f)$.
Thus, when $A$ is strongly convex, we can drop the
positivity assumption in Definition~\ref{convention:weight vector}.

An important special case in which the cone 
spanned by the columns of $A$ is strongly
convex is when
$(1, \dots, 1)$ belongs to
the $\QQ$-rowspan of $A$. This happens if and only if the
toric ideal $I_A$ is homogeneous with respect to the usual
$\ZZ$-grading of the polynomial ring $\CC[\del]$ given by
$\deg(\del_1)=\dots=\deg(\del_n)=1$. 
We use the convention that the word \emph{homogeneous} without
qualifications refers to the usual $\ZZ$-grading of $\CC[\del]$.
\end{remark}

If $w$ is a weight vector for $\HA$, and $\cC$ is an open cone as in
Definition~\ref{convention:weight vector}, 
the \emph{dual cone} $\cC^*$
consisting of elements $u \in \RR^n$ such that $u \cdot w' \geq 0$ for
all $w' \in \cC$ is strongly convex. Moreover, for any nonzero 
$u \in \cC^*$ and any $w'\in \cC$, we have $ u\cdot w' > 0$.

We are now ready to 
define the space of formal 
Nilsson series solutions of $\HA$ associated to a weight
vector $w$. Note that the name \emph{Nilsson class} is usually
reserved for multivalued functions that satisfy tempered growth
conditions (see Chapter~6.4 in~\cite{bjork}).
In this article, we work
with \emph{formal} Nilsson series, except when otherwise noted.

\begin{definition}
\label{defformal}
Let $w$ be a weight vector for the hyper\-geo\-metric system $\HA$.
Denote $\log(x) = (\log(x_1),\dots,\log(x_n))$.
A formal solution $\phi$ of $\HA$ is called a \textbf{basic
Nilsson solution of $\HA$ in the direction of
  $w$} if it has the form 
\begin{equation}
\label{eqn:Nilsson series}
\phi\,  
= \sum_{u \in C} x^{v+u} p_u(\log(x)), 
\end{equation}
for some vector $v\in \CC^n$, and it satisfies
\begin{enumerate}[\qquad 1.]
\item $C$ is contained in 
  ${\cC}^* \cap \ker_\ZZ(A)$, 
  where $\cC$ is an open
  cone containing $w$ as in Definition~\ref{convention:weight vector},
\label{defformal1}
\item The $p_u$ are polynomials, and there exists $K\in \ZZ$ such that
  $\deg(p_u) \leq K$ for all 
  $u \in C$,
\label{defformal2}
\item $p_0 \not=0$. \label{defformal3}
\end{enumerate}
The set 
$\supp(\phi) = \{u \in C \mid p_u \neq 0\} \subset \ker_{\ZZ}(A)$ 
is called the
\textbf{support} of $\phi$.

The $\CC$-span of the basic Nilsson solutions of $\HA$ in the direction of
$w$ is called the \textbf{space of formal Nilsson series solutions of
  $\HA$ in the direction of $w$} and is denoted by $\Fw(\HA)$.
\end{definition}

 In what
follows, we make a detailed study of $\Fw(\HA)$. One of our main
results, Theorem~\ref{ultimoteorema}, states that, for fixed $A$ and $\beta$ there
exists a weight vector $w$ such that $\dim(\Fw(\HA)) = \rank(\HA)$.

Let $w$ be a weight vector for $\HA$.
If $\phi = \sum_{a,b} c_{ab} x^a \log(x)^b$ is a non zero 
element of $\Fw(\HA)$ then, by Proposition~2.5.2 in~\cite{SST}, the set of real parts
$
\{ 
\textrm{Re}(a \cdot w)
    \mid  c_{ab} \neq 0 \text{ for some } b
\}
$
achieves a (finite) minimum, denoted by $\mu(\phi)$,
and the subseries of $\phi$ whose
terms are $c_{ab} x^{a} \log(x)^b$ such that $c_{ab} \neq 0$ and
$\textrm{Re}(a \cdot w)  = \mu(\phi)$ is finite.
We call this finite sum the
\emph{initial series of $\phi$ with respect to $w$} and we denote it by
$\inw(\phi)$. 
The reason for including the third condition in
Definition~\ref{defformal} is to ensure that $\inw(\phi) = x^vp_0(\log(x))$.
Non-basic Nilsson solutions of $\HA$ will play a role in
Section~\ref{sec:logarithm free}.

\begin{remark} 
\label{rem:simplebasic}
Given a weight vector $w$, we may replace the first requirement in
Definition~\ref{defformal} by either of the following equivalent conditions:
\begin{enumerate}[ i)] 
\item  \label{rem:1} 
$C \subset \ker_{\ZZ}(A)$ and 
there exists an open neighborhood $U$ of $w$ such that, for all $w'\in
U$ and all $u \in C \minus \{ 0 \}$, we have
$ w'\cdot u > 0$.
\item \label{rem:2} 
There exist $\RR$-linearly independent
$\gamma_1, \dots, \gamma_n \in \QQ^n$ with 
$w\cdot \gamma_i  > 0$ for all $i=1, \dots, n$, such that 
$C \subset \big( \RR_{\geq 0} \gamma_1 + \dots + \RR_{\geq 0} \gamma_n
\big) \cap \ker_{\ZZ}(A)$.
\end{enumerate}

To see that~\eqref{rem:1} is equivalent to the first condition 
in Definition~\ref{defformal}, note first that, if the latter is true, we
may use $U = \cC$. 
The proof that~\eqref{rem:1} and~\eqref{rem:2} are equivalent is straightforward.
If~\eqref{rem:2} holds, we may take $\cC$ equal to the interior of 
the cone $\RR_{\geq 0} \gamma_1 + \dots + \RR_{\geq 0} \gamma_n$.
\end{remark}

The following lemma allows us to manipulate formal Nilsson solutions of
$\HA$ in the direction of a weight vector.

\begin{lemma}
\label{thm:aux-ini-indep}
Let $\phi_1,\dots,\phi_k \in \Fw(\HA)$.
\begin{enumerate}[1.]
\item If the initial series
$\inw(\phi_1), \dots,\inw(\phi_k)$ are $\CC$-linearly independent,
then so are the series
$\phi_1,\dots,\phi_k$. \label{t-item2}
\item If $\phi_1,\dots,\phi_k$ are $\CC$-linearly independent,
there exists a $k \times k$ complex matrix $(\lambda_{ij})$ such that
the initial series of 
$\psi_i = \sum_{j=1}^k \lambda_{ij} \phi_j$ for $i=1,\dots,k$
are $\CC$-linearly independent.
\label{t-item3}
\end{enumerate}
\end{lemma}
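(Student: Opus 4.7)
The plan for part 1 is a direct leading-term argument. I would assume toward a contradiction that $\sum_{i=1}^k c_i \phi_i = 0$ is a nontrivial linear relation, and set $\mu^* = \min\{\mu(\phi_i) : c_i \neq 0\}$, which is well defined since each $\mu(\phi_i)$ is finite by Proposition~2.5.2 in~\cite{SST}. Collecting the terms of $\sum_i c_i \phi_i$ at weight level $\mu^*$ yields the finite sum
\[
\sum_{i \,:\, \mu(\phi_i) = \mu^*,\; c_i \neq 0} c_i\, \inw(\phi_i) \;=\; 0,
\]
a nontrivial linear relation among a subset of $\{\inw(\phi_1), \dots, \inw(\phi_k)\}$, contradicting their linear independence. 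The only subtlety is that this minimal-weight part is a \emph{finite} sum and can therefore be extracted as a genuine element, which is guaranteed by Proposition~2.5.2 in~\cite{SST}.

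For part 2, I would set $V = \Span_\CC(\phi_1, \dots, \phi_k)$ and equip it with the decreasing filtration
\[
V_{\geq \mu} \;=\; \{\phi \in V : \mu(\phi) \geq \mu\} \cup \{0\}, \qquad \mu \in \RR.
\]
This is a filtration by $\CC$-subspaces: closure under addition follows from the fact that the weight of every term of a sum is inherited from one of the summands, so any cancellation of leading terms can only increase $\mu$. The filtration is exhaustive because $\mu(\phi) > -\infty$ for $\phi \in V \setminus \{0\}$, and separated because $\mu(\phi) < +\infty$ for the same $\phi$; both facts again come from Proposition~2.5.2 in~\cite{SST}. Since $\dim V = k < \infty$, only finitely many real numbers $\mu_1 < \dots < \mu_s$ occur as jumps, i.e., satisfy $V_{\geq \mu_j} \supsetneq V_{>\mu_j}$, where $V_{>\mu_j} = \bigcup_{\mu > \mu_j} V_{\geq \mu}$, and $\sum_j \dim(V_{\geq \mu_j}/V_{>\mu_j}) = k$.

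For each $j$ I would lift a basis of the quotient $V_{\geq \mu_j}/V_{>\mu_j}$ to vectors $\psi_{j,1}, \dots, \psi_{j,k_j} \in V_{\geq \mu_j}$; by construction each $\psi_{j,l}$ satisfies $\mu(\psi_{j,l}) = \mu_j$ (otherwise it would lie in $V_{>\mu_j}$ and vanish in the quotient), and together the $\psi_{j,l}$ form a basis of $V$, which encodes an invertible $k \times k$ matrix $(\lambda_{ij})$ expressing the $\psi_i$ as linear combinations of the $\phi_j$. To finish, I would check that the initial series $\inw(\psi_{j,l})$ are $\CC$-linearly independent: any hypothetical relation decomposes according to the real part $\mu_j$ of its terms into within-stratum relations $\sum_l c_l \inw(\psi_{j,l}) = 0$. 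Such a within-stratum relation forces $\mu\bigl(\sum_l c_l \psi_{j,l}\bigr) > \mu_j$, i.e., $\sum_l c_l \psi_{j,l} \in V_{>\mu_j}$, whence the basis property of $\{\psi_{j,l}\}_l$ in $V_{\geq \mu_j}/V_{>\mu_j}$ gives $c_l = 0$ for every $l$.

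The main obstacle, and the only nontrivial point, is confirming that the filtration has only finitely many jumps with finite-dimensional successive quotients. This is where the finiteness of $\dim V$ interacts crucially with the foundational Proposition~2.5.2 in~\cite{SST}: without knowing a priori that $\mu(\phi)$ is always finite, one could not rule out an infinite descent of jump values, and the associated-graded argument would collapse.
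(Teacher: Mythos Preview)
Your argument is correct. The paper does not give an independent proof: it simply cites Theorem~2.5.5, Lemma~2.5.6(2), and Proposition~2.5.7 of~\cite{SST}, noting that those results hold for formal Nilsson series. What you have written is a self-contained version of the standard leading-term/filtration argument that underlies those cited results, so in substance the two proofs coincide; the only difference is that you spell out explicitly the filtration of $V=\Span_\CC(\phi_1,\dots,\phi_k)$ by $\mu$-level and the passage to the associated graded, whereas the paper outsources this to the reference. Your presentation has the advantage of being readable without consulting~\cite{SST}, and it makes transparent exactly where finiteness of $\mu(\phi)$ (Proposition~2.5.2 in~\cite{SST}) is used.
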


\begin{proof}
Combine Theorem~2.5.5, Lemma~2.5.6(2) 
and Proposition~2.5.7 from~\cite{SST},
which hold for formal Nilsson series.
\end{proof}

\begin{definition} 
\label{exponent}
A vector $v \in \CC $ is an \textbf{exponent} of $\HA$ with respect to a weight
vector $w$ if $x^v$ is a solution of $\inww(\HA)$. 
\end{definition}

Note that if $w$ is a weight vector for $\HA$ and $\cC$ is a strongly convex
open cone as in Definition~\ref{convention:weight vector}, then for any
$w'\in \cC$, the exponents of $\HA$ with respect to $w$ and $w'$
coincide, because $\inww(\HA) = {\rm in}_{(-w',w')}(\HA)$. Moreover,
the basic Nilsson solutions of 
$\HA$ in the direction of $w$ and $w'$ are the same, and therefore
$\Fw(\HA) = \cN_{w'}(\HA)$. 

Let $w$ be a weight vector for $\HA$.
Since $\HA$ is a holonomic ideal for all 
$\beta$ (see Theorem~3.9 in~\cite{Ado}), the ideal $\inww(\HA)$
is holonomic as well, and its holonomic rank is at most 
$\rank(\HA)$, by Theorem~2.1 in~\cite{SST}. This implies that the
set of exponents of $\HA$ with respect to $w$ is finite, since
the monomial functions corresponding to different exponents are
linearly independent.

\begin{lemma} 
\label{rem:initial}
If $\phi $ is a basic Nilsson solution of $\HA$ in the direction of $w$
as in \eqref{eqn:Nilsson series},
then $v$ is an exponent of $\HA$ with respect to $w$.
\end{lemma}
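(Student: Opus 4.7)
The plan is to first identify $\inw(\phi)$, then invoke the general principle that initial series of formal solutions are solutions of initial ideals, and finally upgrade this to the statement that the bare monomial $x^v$ (with no logarithms) is itself annihilated by $\inww(\HA)$.

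First, since $C\subseteq \cC^*\cap\ker_{\ZZ}(A)$ and $w\in\cC$, the remark immediately preceding Definition~\ref{defformal} gives $w\cdot u>0$ for every nonzero $u\in C$. Consequently the minimum of $\mathrm{Re}((v+u)\cdot w)$ over $\supp(\phi)$ is attained only at $u=0$, so $\inw(\phi)=x^v p_0(\log x)$, which is nonzero since $p_0\neq 0$. By the extension to formal Nilsson series of Proposition~2.5.7 of~\cite{SST} (used in Lemma~\ref{thm:aux-ini-indep}), the initial series $x^v p_0(\log x)$ is a solution of $\inww(\HA)$.

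The central step is to upgrade this to the statement that $x^v$ itself is annihilated by $\inww(\HA)$. Given $P\in\inww(\HA)$, write $P=\sum c_{\mu\nu}\, x^\mu\del^\nu$ in normal form and group the terms by the shift $s=\mu-\nu\in\ZZ^n$, writing $P=\sum_s P_s$. The identity $\del_j(x^v q(\log x))=x^{v-e_j}(v_j+\del_{y_j})q(\log x)$, iterated, yields
\begin{equation*}
P_s\bigl(x^v q(\log x)\bigr)=x^{v+s}\,R_s(\del_y)\,q(\log x),
\end{equation*}
where $R_s(\del_y)$ is a constant-coefficient polynomial differential operator in the log-variables $y=\log x$, and whose constant term $R_s(0)$ is precisely the scalar $c_s$ satisfying $P_s\cdot x^v=c_s\,x^{v+s}$. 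Since the formal monomials $x^{v+s}$ for distinct $s\in\ZZ^n$ are linearly independent, the equation $P\cdot(x^v p_0(\log x))=0$ forces $R_s(\del_y)p_0=0$ for every $s$ appearing in $P$.

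A constant-coefficient differential operator $R(\del_y)$ with $R(0)\neq 0$ is injective on $\CC[y]$: for any nonzero polynomial $p$, the expression $R(\del_y)p$ has the same degree and same leading form as $R(0)p$, hence is nonzero. Applying this observation to each $R_s$, together with $p_0\neq 0$, we conclude $R_s(0)=0$, that is, $P_s\cdot x^v=0$, for every $s$; summing over $s$ gives $P\cdot x^v=0$. Hence $x^v$ is a solution of $\inww(\HA)$, proving that $v$ is an exponent. The main technical point is the bookkeeping verification of the formula for $R_s(\del_y)$; once that is in hand, the linear independence of the monomials $x^{v+s}$ and the injectivity lemma close the argument.
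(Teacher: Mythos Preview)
Your proof is correct, and the overall strategy matches the paper's: identify $\inw(\phi)=x^v p_0(\log x)$, observe that this is annihilated by $\inww(\HA)$, and then deduce that $x^v$ itself is annihilated. The difference lies in how the last step is justified. The paper simply invokes Theorems~2.3.3(2) and~2.3.11 of~\cite{SST}, which encode the structure theory of torus-homogeneous holonomic $D$-ideals and guarantee that whenever $x^v p(\log x)$ solves such an ideal, so does $x^v$. You instead reprove this implication by hand: grouping the terms of an arbitrary $P\in\inww(\HA)$ by the shift $s=\mu-\nu$, computing $P_s\cdot(x^v p_0(\log x))=x^{v+s}R_s(\partial_y)p_0$, and using the elementary fact that a constant-coefficient operator with nonzero constant term is injective on polynomials. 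Your route is more self-contained and makes the mechanism transparent; the paper's route is shorter but treats the structural input as a black box. One minor point: the reference you want for the statement that $\inw(\phi)$ solves $\inww(\HA)$ is Theorem~2.5.5 of~\cite{SST}, not Proposition~2.5.7 (the latter concerns linear independence, not passage to initial ideals), though your parenthetical pointer to Lemma~\ref{thm:aux-ini-indep} covers this.
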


\begin{proof}
Since $w$ is a weight vector, $w \cdot u > 0$ for all nonzero $u \in
\cC^*$. As $p_0 \neq 0$, we have $\inw(\phi) = x^vp_0(\log(x))$, and therefore
$x^v p_0(\log(x))$ is a solution of $\inww(\HA)$ 
by Theorem~2.5.5 in~\cite{SST}. But then $x^v$ is a solution of 
$\inww(\HA)$ by Theorems~2.3.3(2) and~2.3.11 in~\cite{SST}. 
\end{proof}

We compare the dimension of the space of formal
Nilsson solutions of $\HA$ in the direction of $w$ with the holonomic rank of the
associated initial ideal.

\begin{proposition}
\label{dimformmenorrankini}
Let $w$ be a weight vector for $\HA$. 
Then
 \begin{equation}
\dim_{\CC}(\Fw(\HA)) \leq \rank(\inww(\HA)).
 \end{equation}
\end{proposition}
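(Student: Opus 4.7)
The plan is to bound $\dim_\CC(\Fw(\HA))$ by producing, from any collection of linearly independent formal Nilsson solutions, the same number of linearly independent holomorphic solutions of $\inww(\HA)$ on a small (simply connected) open subset of the nonsingular locus of $\inww(\HA)$.

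Concretely, suppose $\phi_1,\ldots,\phi_k \in \Fw(\HA)$ are $\CC$-linearly independent. By part~(\ref{t-item3}) of Lemma~\ref{thm:aux-ini-indep}, after replacing the $\phi_i$ by suitable $\CC$-linear combinations, we may assume that the initial series $\inw(\phi_1),\ldots,\inw(\phi_k)$ are themselves $\CC$-linearly independent. The first step is then to observe, exactly as in the proof of Lemma~\ref{rem:initial} (invoking Theorem~2.5.5 in~\cite{SST}, which applies to formal Nilsson series), that each $\inw(\phi_i)$ is a (finite) solution of the initial ideal $\inww(\HA)$.

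Next, I would note that each $\inw(\phi_i)$ has the form $x^{v_i} q_i(\log(x))$ or, more generally, a finite sum $\sum_{j} x^{v_i+u_{ij}} q_{ij}(\log(x))$ whose terms all share the minimum real weight $\mu(\phi_i)$; in any case this is a finite $\CC$-linear combination of functions of the form $x^a \log(x)^b$. Such expressions are bona fide (multivalued) holomorphic functions on any simply connected open subset $U$ of $(\CC^*)^n$, and they remain $\CC$-linearly independent as holomorphic functions on $U$ because distinct monomials $x^a\log(x)^b$ are linearly independent.

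Choose such a $U$ inside the nonsingular locus of $\inww(\HA)$; this is possible since the nonsingular locus is Zariski-open and nonempty (as $\inww(\HA)$ is holonomic, hence its singular locus is a proper analytic subset). Then $\inw(\phi_1),\ldots,\inw(\phi_k)$ yield $k$ linearly independent holomorphic solutions of $\inww(\HA)$ on $U$, so $k \leq \rank(\inww(\HA))$. Taking $k = \dim_\CC(\Fw(\HA))$ proves the proposition.

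The only delicate point is the passage from linear independence of formal series to linear independence of holomorphic germs; this is the step that would be the main obstacle if one did not already have Lemma~\ref{thm:aux-ini-indep} available. Because that lemma reduces us to the case where the $\inw(\phi_i)$ are themselves linearly independent finite logarithmic Puiseux polynomials, the rest is routine.
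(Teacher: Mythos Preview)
Your proof is correct and follows essentially the same route as the paper: use Lemma~\ref{thm:aux-ini-indep}(\ref{t-item3}) to arrange that the initial series are linearly independent, observe that these initial series are finite logarithmic Puiseux polynomials solving $\inww(\HA)$, and conclude by comparing with the holonomic rank on a common open set. The paper's version is more terse but structurally identical; your added remarks about choosing $U$ inside the nonsingular locus and about linear independence of distinct monomials $x^a\log(x)^b$ merely spell out what the paper leaves implicit.
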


\begin{proof}
Choose $\psi_1,\dots,\psi_k$ linearly independent elements of $\Fw(\HA)$. 
The second part of
Lemma~\ref{thm:aux-ini-indep} allows us to assume that
$\textrm{in}_w(\psi_1),\dots,$ $\textrm{in}_w(\psi_k)$ are linearly
independent solutions of the initial system $\inww(\HA)$.  These
initial series have a non empty common open domain of convergence since
they have a finite number of terms. Therefore
$\dim_{\CC}(\Fw(\HA))$ cannot exceed the holonomic rank of $\inww(\HA)$.
\end{proof}

We show in Corollary~\ref{coro:dimequalrankini} that this inequality
is, in fact, an equality for generic $\beta$.
If $I_A$  is a homogeneous ideal, then Theorems~2.4.9,~2.5.1,
and~2.5.16 in~\cite{SST} imply that
$\dim_\CC \Fw(\HA)$, $\rank(\inww(\HA))$, and $\rank(\HA)$ are the same.
However, if $I_A$ is not homogeneous, 
$\rank(\inww(\HA))$ does not always equal $\rank(\HA)$ (see 
Corollary~\ref{coro:notalways}).

\section{Homogenization of formal Nilsson solutions of $\HA$}
\label{sec:invert-der}

The goal of this section is to obtain the solutions of the system
$\HA$ by solving a related hyper\-geo\-metric system that is regular holonomic.  
For generic parameters, this idea was used in other works, such 
as~\cite{OT}; here, we require no genericity hypotheses on $\beta$. The
key concept is that of homogenization.

\begin{notation}
\label{not:homogenization}
Throughout this article, the letter $\rho$ is used to
indicate the homogenization of various objects:
polynomials, ideals, and later on, Nilsson
series.

If $f \in \CC[\del_1,\dots,\del_n]$ is a polynomial,
we denote by $\rho(f) \in \CC[\del_0,\del_1,\dots,\del_n]$
its homogenization, that is,
\[
f = \sum_{u \in \NN^n} c_u \del^u  \Longrightarrow 
\rho(f) = \sum_{u\in \NN^n} c_u \del_0^{\deg(f)-|u|}\del^u, \quad |u| = u_1 +
\dots + u_n.
\]
If $I \subseteq \CC[\del_1,\dots,\del_n]$ is an ideal, then
$\rho(I)\subseteq \CC[\del_0,\del_1,\dots,\del_n] $ denotes the ideal generated by the homogenizations 
$\rho(f)$ for all $f \in I$.

If $A=[a_{ij}]$ is a $d\times n$ integer matrix, then
$\rho(A) \in \ZZ^{(d+1)\times(n+1)}$ is obtained by attaching a column of
zeros to the left of $A$, and then attaching 
a row of ones to the resulting matrix, namely
\[
\rho(A)=\left[\begin{array}{cccc}
 1 & 1 & \ldots & 1 \\
0 & a_{11} & \ldots & a_{1n} \\
\vdots & \vdots & & \vdots \\
0 & a_{d1} & \ldots & a_{dn}
\end{array}\right].
\]
Note that $\rho(I_A) = I_{\rho(A)}$.
\end{notation}

Let $w$ be a weight vector for $\HA$ and let $\cC$ be a cone as in
Definition~\ref{convention:weight vector}. In particular $w\in
\RR^n_{>0}$. For fixed $\beta_0 \in \CC$, consider
the (regular holonomic) hyper\-geo\-metric system associated to the matrix
$\rho(A)$ from Notation~\ref{not:homogenization} and the vector 
$(\beta_0,\beta) \in\CC^{n+1}$, which is denoted by $\HrA$.

\begin{remark}\label{rmk:0w}
Since $(1,\dots,1)$ belongs to the rational rowspan of $\rho(A)$,
weight vectors for $\HrA$ are not required to have positive
coordinates (see Remark~\ref{remark:positive-weight-vectors}). We wish
to use $(0,w)$ as a weight vector for $\HrA$, but 
this vector may not satisfy the definition. This can be remedied by
perturbing $w$ as follows.
The set of weight vectors for $\HrA$ is an open dense subset of
$\RR^{n+1}$. Therefore, given $w$ (and $\cC$) as before, there exists $\alpha=
(\alpha_0,\dots,\alpha_n) \in \RR^{n+1}$ and $\varepsilon_0>0$ 
such that $(0,w)+ \varepsilon \alpha$ is a weight vector for $\HrA$
for all $0<\varepsilon <\varepsilon_0$. But then the argument in
Remark~\ref{remark:positive-weight-vectors} shows that
$(0,w) + \varepsilon \alpha -\varepsilon \alpha_0 (1,\dots,1)$ is also
a weight vector for $\HrA$.
If $\varepsilon$ is sufficiently small, then 
$w'=w + \varepsilon((\alpha_1,\dots,\alpha_n)-\alpha_0(1,\dots,1))$
belongs to an open cone $\cC$ as in Definition~\ref{convention:weight vector}. This means
that we can use $w'$ instead of $w$ as weight vector for $\HA$, with the same
open cone, initial ideals, and basic Nilsson solutions as $w$, and
guarantee that $(0,w')$ is a weight vector for $\HrA$. 
The previous argument justifies assuming, as { we do from now on}, that
any time we choose a weight 
vector $w$ for $\HA$, the vector $(0,w)$ is a \emph{weight vector} for $\HrA$.
\end{remark}

We choose a weight vector $w$, and we wish to use the auxiliary 
regular
system $\HrA$ to study the solutions of $\HA$. The matrix
$\rho(A)$ is fixed, but we have freedom in the choice of the parameter
$\beta_0 \in \CC$, and it is convenient to assume that $\beta_0$ is
generic. The correct notion of genericity for $\beta_0$ can be found
in Definition~\ref{defi:homog-value}. Under that hypothesis, our
objective is to construct
an injective linear map
\begin{equation}
\label{eq:homog-sol}
\rho : \Fw(\HA) \longrightarrow
\mathscr{N}_{(0,w)}(H_{\rho(A)}(\beta_0,\beta)),
\end{equation}
whose image is described in Theorem~\ref{thm:image of
  rho}. Since by definition $\Fw(\HA)$ has a basis consisting of
basic Nilsson solutions of $\HA$ in the direction of $w$,
it is enough to define our map on those
series, and check that their images are linearly independent. 
For some weight vectors, $\rho$ is guaranteed to be
surjective (Proposition~\ref{cor:surj-case}). 
However, if the cone over the columns of $A$ is strongly convex and
$I_A$ is not homogeneous, there
always exist weights for which surjectivity 
fails (Proposition~\ref{coro:non-surj}).

Let $\phi=\sum_{u\in C} x^{v+u} p_u(\log(x))$ be a basic Nilsson
solution of $\HA$ in the direction of $w$ as 
in~\eqref{eqn:Nilsson series}.
Since $\phi$ is annihilated by the Euler operators
$E_1-\beta_1,\dots,E_d-\beta_d$, the polynomials $p_u$
appearing in $\phi$ belong to the symmetric algebra of the lattice
$\ker_{\ZZ}(A)$ by Proposition~5.2 in~\cite{S}. 

If $v$ is a vector, denote by $|v|$ the sum of its coordinates. Then,
for any $\gamma \in \ker_{\ZZ}(A)$, $(-|\gamma|,\gamma) \in
\ker_{\ZZ}(\rho(A))$. This inclusion
$\ker_{\ZZ}(A) \hookrightarrow \ker_{\ZZ} (\rho(A))$ 
induces an injection (denoted by $\; \widehat{\cdot}\;$) between the corresponding
symmetric algebras of the lattices of $\ker_{\ZZ}(A)$ and $\ker_{\ZZ}(\rho(A))$.
In concrete terms, let $\{ {\gamma}_1,\dots,{\gamma}_{n-d} \} \subset \ZZ^n $ 
be a $\ZZ$-basis of $\ker_{\ZZ}(A)$. We can write an element $p$
of the symmetric algebra of $\ker_{\ZZ}(A)$ as follows:
\[
p(t_1,\dots,t_n) = 
\sum_{\alpha \in \NN^{n-d}} c_{\alpha} \prod_{j=1}^{n-d} 
({\gamma}_j\cdot(t_1,\dots,t_n))^{\alpha_j},
\]
and then we have
\begin{equation}
\label{eq:qu}
\hat{p}(t_0,t_1,\dots,t_n) = 
\sum_{\alpha \in \NN^{n-d}} c_{\alpha} 
\prod_{j=1}^{n-d} ((-|{\gamma}_j|)\, t_0 +{\gamma}_j\cdot(t_1,\dots,t_n))^{\alpha_j}.
\end{equation}
Note that $\hat{p}(\log(x_0),\dots,\log(x_n))$ specializes to
$p(\log(x))$ when $x_0=1$, or equivalently, when $\log(x_0)=0$.

The formal definition of the homogenization of a basic Nilsson
solution 
\[\phi = \sum_{u\in C} x^{v+u} p_u(\log(x))\] 
of $\HA$ in the direction of $w$ is:
\begin{equation}
\label{eqn:definition of rho}
\rho(\phi) = \sum_{u\in C} \del_0^{|u|} x_0^{\beta_0-|v|}
x^{v+u}\widehat{p_u}(\log(x_0),\dots,\log(x_n)). 
\end{equation}
If $|u|\geq 0$ for all $u \in C$, the
above formula makes sense, and it easily checked that $\rho(\phi)$
is a basic Nilsson solution of $\HrA$ in the direction of $(0,w)$. 
The bulk of the work in this section concerns the definition and
properties of the operator $\del_0^{k}$ when $k \in \ZZ_{<0}$.

We point out that there is one case when the elements of the supports
of all basic Nilsson solutions of $\HA$ in the direction of $w$ are
guaranteed to have non negative coordinate sum, namely
when the weight vector $w$ is close to
$(1,\dots,1)$. We make this notion precise in the following
definition. 

\begin{definition}
\label{def:perturbation}
Let $w$ be weight vector for $\HA$. We say that $w$ is a
\textbf{perturbation} of  
$w_0 \in \RR_{>0}^n$ if there exists an open cone $\cC$ as in
Definition~\ref{convention:weight vector} with $w \in \cC$, such that
$w_0$ lies in the closure of $\cC$.
\end{definition}

Suppose that $\phi = x^v \sum_{u \in C} x^up_u(\log(x))$ 
is a basic Nilsson solution
of $\HA$ in the direction of a weight vector $w$ which is a perturbation
of $(1, \dots,1)$. Since $u \in C$ implies 
$u \in \cC^* = (\overline{\cC})^*$, we have $u \cdot w \geq 0$ for all
$u \in C$. But then, as $w$ is a perturbation of
$(1,\dots,1)$, it follows that $|u|= u \cdot (1,\dots,1) \geq 0$ for
all $u \in C$. Also, $C \subseteq \ZZ^n$ implies $|u| \in \ZZ$ for $u
\in C$. Therefore, the operator $\partial_0^{|u|}$ is
defined, and so is~\eqref{eqn:definition of rho}.

As we mentioned before, in order to work with other weight vectors, we
must define the operator $\del_0^k$ when $k$ is negative. 
We need to introduce key combinatorial objects from~\cite{STV}, the
 standard pairs of a monomial ideal $I$, which correspond to
to the associated primes of $I$ (see~\cite{STV}).

\begin{definition}
\label{def:stdpairs}
Let $I \subseteq \CC[\del_1,\dots,\del_n]$ be a monomial ideal. Consider the variety
\[
\VV=
\overline{
\{ u \in \NN^n \subset \CC^n  \mid  x^u \not \in I \}
}^{\rm{\scriptstyle{Zariski}}} 
\]
whose irreducible components are integer translates of coordinate spaces. For
$\sigma \subseteq \{1,\dots,n\}$, denote
$\CC^{\sigma} = \{ (t_1,\dots,t_n) \mid t_i = 0 \;\mbox{for}\; i \not \in \sigma \}$.
A pair $(\del^{\alpha},\sigma)$, where $\alpha \in \NN^n$ and 
$\sigma \subseteq \{1,\dots,n\}$, 
is called a \textbf{standard pair of $I$} if 
$\alpha+\CC^{\sigma}$ is an irreducible component of $\VV$ and
$\alpha$ is the coordinatewise minimal element of 
$(\alpha+\CC^{\sigma})\cap\NN^n$.
\end{definition}

Note that, if $(\del^{\alpha},\sigma)$ is a standard pair of a
monomial ideal $I\subseteq \CC[\del]$, then $\alpha_i = 0$ for all $i
\not \in \sigma$. 
A standard pair is \emph{top-dimensional} if the 
corresponding associated prime is minimal, and it is \emph{embedded}
otherwise. We denote by $\cT(I)$ the set of top-dimensional
standard pairs of $I$, and define
$\tp(I)$ to be the intersection
\[
\tp(I)=\bigcap_{(\partial^{\alpha}, \, \sigma)\in\cT(I)}
          \langle\partial_i^{\alpha_i+1} \mid i\notin\sigma\rangle.
\]

\begin{definition}
\label{def:fake-exps}
A vector $v \in \CC^n$ is a \textbf{fake exponent} of $\HA$ with
respect to $w$ if the function $x^v$ is a solution of the left
$D$-ideal $\inw(I_A)+\<E-\beta\>$.
\end{definition}

Note that, as $\inw(I_A)+\<E-\beta\> \subseteq \inww(\HA)$, all
exponents of $\HA$ with respect to $w$ (Definition~\ref{exponent}) are
also fake exponents. In general, this containment is strict, but fake
exponents are easier to study combinatorially than (true) exponents.
This is illustrated by the following result, a consequence
of Lemma~4.1.3 in~\cite{SST}, which links fake 
exponents and standard pairs.

\begin{lemma}
\label{lemma:fake-exps}
Let $w$ be a weight vector for $\HA$ and let $(\del^{\alpha},\sigma)$ be a
standard pair of $\inw(I_A)$. If there exists $v \in \CC^n$ such that
$A\cdot v= \beta$ and $v_i = \alpha_i \,$ for all $i \not \in \sigma$,
then $v$ is a fake exponent of $\HA$ with respect to $w$. All fake
exponents arise this way.
\end{lemma}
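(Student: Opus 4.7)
My plan is to separate the condition on $v$ into two independent pieces. The Euler operators $E_1 - \beta_1,\dots,E_d - \beta_d$ annihilate $x^v$ if and only if $A\cdot v = \beta$, since $E_i\, x^v = (A\cdot v)_i\, x^v$. It remains to show that $x^v$ is a solution of the monomial ideal $\inw(I_A)$ if and only if there is a standard pair $(\del^{\alpha},\sigma)$ of $\inw(I_A)$ with $v_i = \alpha_i$ for all $i\notin\sigma$. The elementary identity
\[
\del^u x^v = \prod_{j=1}^n v_j(v_j - 1)\cdots(v_j - u_j + 1)\, x^{v-u}
\]
is the bridge between the combinatorics of standard pairs and the action on $x^v$: the right-hand side vanishes exactly when some coordinate $v_j$ is a non-negative integer strictly less than $u_j$.

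For the forward direction, assume $v_i = \alpha_i$ for $i\notin\sigma$ and let $\del^u \in \inw(I_A)$. The key step is to produce some $i\notin\sigma$ with $u_i > \alpha_i$. If instead $u_i \leq \alpha_i$ for all $i\notin\sigma$, choosing $\mu\in\NN^n$ supported on $\sigma$ with $\mu_i$ sufficiently large yields $\del^u \mid \del^{\alpha+\mu}$, forcing $\del^{\alpha+\mu}\in\inw(I_A)$; but $\del^{\alpha+\mu}$ lies in $(\alpha+\CC^{\sigma})\cap\NN^n\subset \VV$, so $\del^{\alpha+\mu}\notin\inw(I_A)$, a contradiction. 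At the witnessing $i$, the value $v_i = \alpha_i$ is a non-negative integer with $\alpha_i < u_i$, so the factor $(v_i-\alpha_i)$ in the product above is zero, giving $\del^u x^v = 0$.

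For the converse, assume $x^v$ is annihilated by $\inw(I_A) + \< E-\beta\>$; the Euler part gives $A\cdot v = \beta$. Set $T = \{i : v_i\in\NN\}$ and choose an integer $N$ larger than every coordinate of every $\alpha$ occurring in the finitely many standard pairs of $\inw(I_A)$. Define $\tilde v\in\NN^n$ by $\tilde v_i = v_i$ for $i\in T$ and $\tilde v_i = N$ for $i\notin T$. Applying the formula at $u = \tilde v$, the factor at $i\in T$ is $v_i!\neq 0$, while for $i\notin T$ the value $v_i$ is not a non-negative integer, so every factor of $v_i(v_i-1)\cdots(v_i-N+1)$ is nonzero. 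Hence $\del^{\tilde v}x^v \neq 0$, so $\del^{\tilde v}\notin\inw(I_A)$, placing $\tilde v$ in some component $\alpha+\CC^{\sigma}$ of $\VV$; thus $\tilde v_i = \alpha_i$ for $i\notin\sigma$, and since $\alpha_i < N$ each such $i$ must lie in $T$, giving $v_i = \tilde v_i = \alpha_i$.

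The main obstacle is the combinatorial argument in the forward direction that isolates an index $i\notin\sigma$ with $u_i>\alpha_i$; once that is secured, the rest is a direct translation between membership in $\inw(I_A)$ and vanishing of the displayed product.
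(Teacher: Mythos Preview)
Your argument is correct. The paper itself gives no proof of this lemma, deferring instead to Lemma~4.1.3 in~\cite{SST}; your self-contained argument essentially unpacks that reference, translating the standard-pair combinatorics directly into the vanishing or nonvanishing of the product $\prod_j v_j(v_j-1)\cdots(v_j-u_j+1)$, with the auxiliary vector $\tilde v$ in the converse direction serving as a witness that locates $v$ inside a component of~$\mathscr{V}$.
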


We also need to consider standard pairs of initial ideals of $I_{\rho(A)}$.
Recall that the initial ideal $\inow(I_{\rho(A)})$ is a monomial ideal, as $(0,w)$
is a weight vector for $\HrA$.

\begin{definition}
\label{def:pass through zero}
A standard pair of
$\inow(I_{\rho(A)})$ is said to \textbf{pass through zero} if $0 \in \sigma$.
\end{definition}

\begin{remark}
\label{remark:exponent notation}
If a fake exponent $v$ of $\HA$ exists for a 
standard pair $(\del^{\alpha},\sigma)$ of $\inw(I_A)$, it is
unique by Corollary~3.2.9 in~\cite{SST}; this justifies 
using the notation $v=\beta^{(\del^{\alpha},\sigma)}$.
\end{remark}

\begin{definition}
\label{def:arises}
We say that a basic Nilsson solution of $\HA$ as 
in~\eqref{eqn:Nilsson series}
is \textbf{associated} to the standard pair $(\del^{\alpha},\sigma)$
if $v$ is the (fake) 
exponent corresponding to this standard pair. 
\end{definition}

The following definition gives the correct notion of genericity for
$\beta_0 \in \CC$ so that we can study the solutions of $\HA$ using
those of $\HrA$.

\begin{definition}
\label{defi:homog-value}
Let $w$ be a weight vector for $\HA$.
We say that $\beta_0 \in \CC$ is
a \textbf{homogenizing value} for $A$, $\beta$, and $w$ if 
$\beta_0 \notin \ZZ$ and
for any fake exponent $v$ of $\HA$ with respect to $w$, the quantity
$v_0$ defined by 
\[v_0: = \beta_0-\sum_{j=1}^n v_j\]
is not an integer number.
\end{definition}

Given a weight vector $w$ for $\HA$, we fix a homogenizing value
$\beta_0$ for $A$, $\beta$, and $w$. Let $\phi$ as
in~\eqref{eqn:Nilsson series} be a basic
Nilsson solution of $\HA$ in the direction of $w$. We want to construct a basic
Nilsson solution $\rho(\phi)$ of $\HrA$ in the direction of $(0,w)$.
The following lemma tells us how to differentiate
logarithmic terms.

\begin{lemma}(Lemma~5.3 in~\cite{S})
\label{lem:form-of-der}
Let $h$ be a polynomial in $n$ variables, $\nu \in \NN^n$ and $s \in \CC^n$.
Then
\[
\del^\nu x^{s} h(\log(x)) = x^{s-\nu} \left( \sum _{0 \leq \nu' \leq \nu}
\lambda_{\nu'}  \bigg[\del^{\nu-\nu'} h\bigg](\log(x))\right),
\]
where the sum is over nonnegative integer vectors $\nu'$ that are
coordinatewise smaller than $\nu$, and the
$\lambda_{\nu'}$ are certain complex numbers. \qed
\end{lemma}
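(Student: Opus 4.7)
The plan is to prove the formula by induction on $|\nu| = \nu_1 + \cdots + \nu_n$, the case $|\nu| = 0$ being trivial (take $\lambda_0 = 1$). The heart of the argument is the base case $\nu = e_i$: applying the Leibniz rule together with the chain-rule identity $\del_i \log(x_j) = \delta_{ij}/x_i$ yields
\[
\del_i\bigl[x^s h(\log x)\bigr]
= s_i\, x^{s-e_i} h(\log x) + x^{s-e_i}(\del_i h)(\log x)
= x^{s-e_i}\bigl[(\del_i h)(\log x) + s_i\, h(\log x)\bigr],
\]
which is of the required shape with $\lambda_0 = 1$ and $\lambda_{e_i} = s_i$.

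For the inductive step I would factor $\del^{\nu+e_j} = \del_j\circ \del^{\nu}$, apply the inductive hypothesis to $\del^{\nu}[x^{s}h(\log x)]$, and then apply the base case to each summand, substituting the polynomial $g = \del^{\nu-\nu'}h$ for $h$ and the exponent $s-\nu$ for $s$. A routine re-indexing of the resulting double sum, grouping terms according to the new index $\mu$ governing which derivative $\del^{\nu+e_j-\mu}h$ appears, recasts the expression in the desired form and produces the recursion
\[
\lambda'_\mu = \lambda_\mu + (s_j - \nu_j)\, \lambda_{\mu - e_j}
\]
for the new coefficients, with the convention that $\lambda_\alpha = 0$ whenever $\alpha$ lies outside $\{0 \leq \alpha \leq \nu\}$.

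Since the statement only asserts the existence of complex numbers $\lambda_{\nu'}$, the recursion is more precise than necessary; the salient point is that at each stage the coefficients depend polynomially on the entries of $s$ and $\nu$ but are independent of $h$, so the formula survives the passage from $\nu$ to $\nu+e_j$. No analytic or algebraic subtlety arises, because $x^s$ and $h(\log x)$ are multiplied as commuting functions and the operators involved are ordinary partial derivatives; the only real obstacle is combinatorial bookkeeping, which is handled by the re-indexing step above.
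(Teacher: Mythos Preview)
Your inductive argument is correct: the Leibniz rule plus the chain rule give the base case $\nu=e_i$ exactly as you wrote, and the factorization $\del^{\nu+e_j}=\del_j\circ\del^\nu$ together with the re-indexing $\mu=\nu'$ or $\mu=\nu'+e_j$ yields the recursion $\lambda'_\mu=\lambda_\mu+(s_j-\nu_j)\lambda_{\mu-e_j}$, which is more than enough to establish existence of the coefficients.

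There is nothing to compare against in the paper itself: the lemma is quoted verbatim from Saito~\cite{S} and marked with a \qed, so no proof is supplied here. Your argument is the natural elementary verification and would serve perfectly well as a self-contained proof; the only remark worth adding is that the coefficients $\lambda_{\nu'}$ depend on $s$ (and on $\nu$) but not on $h$, a fact you observe and which is implicitly used in the paper's later manipulations.
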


The following result allows us to define $\del_0^k$ when $k$ is a
negative integer.

\begin{lemma}
\label{lema:invert-del0}
Let $\hat{p}$ be a polynomial in $n+1$ variables, and $s \in \CC^n$. 
If $s_0\in\CC$ and $s_0 \not= -1$, there
exists a unique polynomial $\hat{q}$ with 
$\deg(\hat{q}) = \deg(\hat{p})$   
such that
\begin{equation}
\label{eqn:antider}
\del_0 \big[ x_0^{s_0+1} x^s \hat{q}(\log(x_0),\dots,\log(x_n)) \big] =
x_0^{s_0}x^s \hat{p}(\log(x_0),\dots,\log(x_n)).
\end{equation}
\end{lemma}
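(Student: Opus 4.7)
The plan is to reduce the functional equation \eqref{eqn:antider} to a linear equation in the polynomial ring $\CC[t_0,\dots,t_n]$, and then invert the resulting operator using the nilpotence of a partial derivative together with the hypothesis $s_0\neq -1$.

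First, I would compute the left-hand side of \eqref{eqn:antider} by the product and chain rules. Writing $t_i$ for the $i$-th argument of $\hat{q}$, using that $x^s$ is independent of $x_0$ and that $\del_0\log(x_0)=1/x_0$, a direct calculation gives
\[
\del_0\bigl[x_0^{s_0+1}x^s\,\hat{q}(\log(x_0),\dots,\log(x_n))\bigr]
= x_0^{s_0}x^s\bigl[(s_0+1)\hat{q}+\partial_{t_0}\hat{q}\bigr](\log(x_0),\dots,\log(x_n)).
\]
Since $\log(x_0),\dots,\log(x_n)$ are algebraically independent as functions of $x$, matching this against $x_0^{s_0}x^s\hat{p}(\log(x_0),\dots,\log(x_n))$ reduces the problem to finding a unique polynomial $\hat{q}\in\CC[t_0,\dots,t_n]$ with
\[
L(\hat{q}):=(s_0+1)\hat{q}+\partial_{t_0}\hat{q}=\hat{p}.
\]

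Second, I would invert $L$. Since $\partial_{t_0}$ is locally nilpotent on $\CC[t_0,\dots,t_n]$ (it annihilates any polynomial after finitely many applications), and $s_0+1\neq 0$ by the hypothesis $s_0\neq -1$, I can factor $L=(s_0+1)\bigl(I+\tfrac{1}{s_0+1}\partial_{t_0}\bigr)$ and write the inverse as a finite geometric series:
\[
\hat{q}=L^{-1}\hat{p}=\frac{1}{s_0+1}\sum_{k=0}^{\deg(\hat{p})}\left(\frac{-1}{s_0+1}\right)^{k}\partial_{t_0}^{k}\hat{p}.
\]
This formula delivers both existence and uniqueness of $\hat{q}$. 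Moreover, since $\partial_{t_0}^k$ strictly lowers total degree for $k\geq 1$, the $k=0$ summand contributes the top-degree part $\frac{1}{s_0+1}\hat{p}$ with no cancellation, and all higher-$k$ terms sit below that degree; hence $\deg(\hat{q})=\deg(\hat{p})$.

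The proof is largely routine. The only points to handle with care are the chain-rule computation through the logarithms and the observation that the hypothesis $s_0\neq -1$ is precisely what makes the nilpotent perturbation $L=(s_0+1)I+\partial_{t_0}$ of a nonzero scalar operator invertible; neither of these is a serious obstacle.
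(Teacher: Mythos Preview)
Your proof is correct and follows essentially the same approach as the paper: both compute $\del_0$ of the left-hand side to reduce \eqref{eqn:antider} to the polynomial equation $(s_0+1)\hat{q}+\partial_{t_0}\hat{q}=\hat{p}$ and then invert this triangular operator. The paper does this by expanding $\hat{p}$ and $\hat{q}$ in powers of $\log(x_0)$ and solving the resulting recursion for the coefficients, which is exactly your geometric-series formula written out coordinatewise.
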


\begin{proof}
Writing 
\[
\hat{p}(\log(x)) = \sum_{i=0}^k p_i(\log(x_1),\dots,\log(x_n)) \log(x_0)^i
\]
and
\[
\hat{q}(\log(x)) = \sum_{i=0}^k q_i(\log(x_1),\dots,\log(x_n)) \log(x_0)^i,
\] 
we can 
equate coefficients in~(\ref{eqn:antider}) to obtain
\[
p_k = (s_0+1)q_k \; ; \quad p_i = (s_0 +1) q_i + (i+1) q_{i+1} \quad 0
\leq i \leq k-1.
\]
Therefore
\[
q_{i} = \sum_{\ell = 0}^{k-i} (-1)^\ell \frac{\prod_{j=1}^{\ell}
(i+j)}{(s_0+1)^{\ell}} p_{i+\ell} 
\quad 0 \leq \ell \leq k,
\]
where the empty product is defined to be $1$.
\end{proof}

\begin{definition} 
\label{not:delta0-1}
With the notation of Lemma~\ref{lema:invert-del0}, define
\[
\del_0^{-1} \big[ x_0^{s_0}x^s \hat{p}(\log(x_0),\dots,\log(x_n)) \big] = 
x_0^{s_0+1} x^s \hat{q}(\log(x_0),\dots,\log(x_n)).\]
Note that if $s_0\not=-2, \dots, -k$, 
the construction
of  $\del_0^{-1}$ can be iterated $(k-1)$ times. We denote by
$\del_0^{-k} \big[ x_0^{s_0}x^s \hat{p}(\log(x_0),\dots,\log(x_n)) \big]$
the outcome of this procedure.
\end{definition}

\begin{lemma}
\label{lema:still-homog}
Use the same notation and hypotheses as in
Lemma~\ref{lema:invert-del0}, and assume furthermore that
$x_0^{s_0} x^s \hat{p}(\log(x))$ is a solution of $\< E_0 - \beta_0,
E-\beta\>$. Then
$\del_0^{-1}[x_0^{s_0} x^s \hat{p}(\log(x))]$ is a solution of
$\< E_0 -(\beta_0+1), E-\beta\>$.
If $s_0 \neq -2,\dots, -k$, then 
$\del_0^{-k}[x_0^{s_0} x^s \hat{p}(\log(x))]$ is a solution of
the system
$\< E_0 -(\beta_0+k), E-\beta\>$.
\end{lemma}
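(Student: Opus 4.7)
The plan is to first establish the $k=1$ case and then iterate. The Euler operators attached to $\rho(A)$ split into two types: the top one is $E_0 = x_0\del_0+\sum_{j=1}^n x_j\del_j$ (from the row of ones), while for $i=1,\dots,d$ the operator $E_i$ is as in Definition~\ref{def:A-hyp} and does not involve $x_0$ or $\del_0$, because the $i$th row of $\rho(A)$ begins with a zero. I would verify the assertion for these two types of operators separately.

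For the $E_i$ with $i\ge 1$: since $E_i$ involves neither $x_0$ nor $\del_0$, it commutes with $\del_0$ in the Weyl algebra, and I claim it also commutes with the antiderivative $\del_0^{-1}$ of Definition~\ref{not:delta0-1}. Writing $\psi = \del_0^{-1}\phi$, both $E_i\psi$ and $\del_0^{-1}(E_i\phi)$ have the form $x_0^{s_0+1}x^s\hat r(\log(x))$ and are antiderivatives of $E_i\phi$ with respect to $\del_0$, so the uniqueness in Lemma~\ref{lema:invert-del0} forces them to agree. Applied to the hypothesis $(E_i-\beta_i)\phi=0$, this gives $(E_i-\beta_i)\psi=0$. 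For $E_0$, the key commutator is $[\del_0,E_0]=\del_0$, so $\del_0(E_0\psi) = E_0(\del_0\psi)+\del_0\psi = E_0\phi+\phi = (\beta_0+1)\phi$; thus $E_0\psi$ and $(\beta_0+1)\psi$ are two antiderivatives of $(\beta_0+1)\phi$ of the prescribed shape $x_0^{s_0+1}x^s\hat r(\log(x))$. The main technical point is to conclude that they coincide, not merely that their difference lies in $\ker(\del_0)$. This is a direct calculation: if $x_0^{s_0+1}x^s\hat r(\log(x))$ is annihilated by $\del_0$ and $s_0+1\ne 0$, then expanding $\hat r$ as a polynomial in $\log(x_0)$ and comparing coefficients in descending degree forces each coefficient of $\hat r$ to vanish, so $\hat r\equiv 0$. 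This is the only nontrivial step and is essentially the uniqueness already exploited in Lemma~\ref{lema:invert-del0}.

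The iterated version follows by induction on $k$. The hypothesis $s_0\ne-2,\dots,-k$ guarantees that after $j$ applications of $\del_0^{-1}$ the leading exponent of $x_0$ is $s_0+j$, which remains different from $-1$, so $\del_0^{-k}$ is well defined on the input. Each application shifts $\beta_0$ by one while leaving $E-\beta$ unchanged, so after $k$ iterations we obtain a solution of $\<E_0-(\beta_0+k),\,E-\beta\>$, as required.
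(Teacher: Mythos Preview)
Your proof is correct and follows essentially the same route as the paper. The paper computes $\del_0\bigl[(E_i-\beta_i)\psi\bigr]=0$ (using $\del_0(E_i-\beta_i)=(E_i-\beta_i)\del_0$ for $i\geq 1$ and $\del_0(E_0-(\beta_0+1))=(E_0-\beta_0)\del_0$) and then argues that a function of the form $x_0^{s_0+1}x^s\hat r(\log(x))$ in $\ker(\del_0)$ with $s_0+1\neq 0$ must vanish; you package the same calculation as ``two antiderivatives of the prescribed shape must coincide by the uniqueness in Lemma~\ref{lema:invert-del0}'', and both proofs finish by induction on $k$.
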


\begin{proof}
If $i > 0$, $\del_0 (E_i - \beta_i) = (E_i - \beta_i) \del_0$, so that
\begin{align*}
\del_0 (E_i -\beta_i) (\del_0^{-1}x_0^{s_0} x^s \hat{p}(\log(x))) & = 
(E_i -\beta_i) \del_0 (\del_0^{-1}x_0^{s_0} x^s \hat{p}(\log(x))) \\
& =  (E_i-\beta_i) x_0^{s_0} x^s \hat{p}(\log(x))  \\
& = 0.
\end{align*}
This means that $(E_i-\beta_i) (\del_0^{-1}x_0^{s_0} x^s \hat{p} (\log(x)))$ is
constant 
with respect to $x_0$. On the other hand, $s_0+1 \neq 0$ and 
$(E_i-\beta_i) (\del_0^{-1}x_0^{s_0} x^s  \hat{p}(\log(x)))$
is a multiple of $x^{s_0+1}$. Thus, in order
to be constant with respect to $x_0$,  $(E_i-\beta_i)
(\del_0^{-1}x_0^{s_0} x^s \hat{p} (\log(x)))$ has to vanish. 
For $i=0$, the argument is similar since
$\del_0 (E_0 -(\beta_0+1))$ $= (E_0 -\beta_0)\del_0$.
The last assertion follows by induction on $k$.
\end{proof}

\begin{lemma}
\label{lema:commute}
Let $\nu \in \NN^n$, $k \in \ZZ$, and assume that $s_0 \notin \ZZ$. Then
\[
\del^\nu \big[ \del_0^k \big[ x_0^{s_0}x^s \hat{p}(\log(x_0),\dots,\log(x_n)) \big]
\big] =
\del_0^k \big[ \del^\nu \big[  x_0^{s_0}x^s \hat{p}(\log(x_0),\dots,\log(x_n)) \big]
\big] .
\]
\end{lemma}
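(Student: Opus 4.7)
My plan is to split the proof according to the sign of $k$. When $k \geq 0$, the operators $\partial_0^k$ and $\partial^\nu$ are both honest partial differential operators in disjoint sets of variables, so they commute and the identity is immediate. The substantive content is therefore the case $k < 0$, where $\partial_0^k$ is only defined via the antiderivative construction in Definition~\ref{not:delta0-1}. I will reduce this case to $k = -1$ by induction on $-k$, observing that the hypothesis $s_0 \notin \ZZ$ guarantees that every intermediate exponent of $x_0$ that appears (namely $s_0, s_0+1, \ldots, s_0+k+1$) avoids the forbidden value $-1$, so each successive application of $\partial_0^{-1}$ is legitimate.

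For the base case $k = -1$, I would set $\Phi = \partial_0^{-1}[x_0^{s_0} x^s \hat{p}(\log(x_0),\dots,\log(x_n))]$, so that $\Phi = x_0^{s_0+1} x^s \hat{q}(\log(x))$ for a unique polynomial $\hat{q}$ by Lemma~\ref{lema:invert-del0}, and $\partial_0 \Phi = x_0^{s_0} x^s \hat{p}(\log(x))$ by construction. I would then apply $\partial^\nu$ to this equation and use the fact that $\partial^\nu$ commutes with $\partial_0$ as genuine differential operators, obtaining
\[
\partial_0 \bigl[\partial^\nu \Phi\bigr] = \partial^\nu\bigl[x_0^{s_0} x^s \hat{p}(\log(x))\bigr].
\]
By Lemma~\ref{lem:form-of-der}, both $\partial^\nu \Phi$ and $\partial^\nu[x_0^{s_0}x^s\hat{p}(\log(x))]$ are of the form $x_0^{s_0+1} x^{s-\nu}$ (resp.\ $x_0^{s_0} x^{s-\nu}$) times a polynomial in $\log(x_0),\dots,\log(x_n)$, since $\partial^\nu$ does not involve $\partial_0$ and thus leaves the $x_0^{s_0+1}$ (resp.\ $x_0^{s_0}$) factor intact.

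This puts the displayed equation in exactly the shape required by the uniqueness part of Lemma~\ref{lema:invert-del0}, applied with $s$ replaced by $s - \nu$ and with $s_0 \neq -1$ (automatic from $s_0 \notin \ZZ$). The uniqueness therefore identifies $\partial^\nu \Phi$ with $\partial_0^{-1}\bigl[\partial^\nu[x_0^{s_0} x^s \hat{p}(\log(x))]\bigr]$, which is precisely the desired commutation for $k = -1$. The inductive step for $k \leq -2$ then follows by writing $\partial_0^{k} = \partial_0^{-1} \circ \partial_0^{k+1}$, applying the inductive hypothesis to move $\partial^\nu$ past $\partial_0^{k+1}$, and applying the base case to move it past the outermost $\partial_0^{-1}$; the hypothesis $s_0 \notin \ZZ$ is used exactly here to ensure none of the intermediate $x_0$-exponents hits $-1$.

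The main obstacle, if any, is purely bookkeeping: one must verify that at each step of the iteration, the expression to which $\partial_0^{-1}$ is applied still has the single-monomial-in-$x_0$ form required by Lemma~\ref{lema:invert-del0}. This is automatic because $\partial^\nu$ acts only on the $x_i$ with $i \geq 1$ and hence preserves the $x_0$-power shape, so no real difficulty arises.
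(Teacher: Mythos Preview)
Your proposal is correct and follows essentially the same approach as the paper's own proof, which is a one-line argument: the case $k\ge 0$ is clear, and for $k<0$ the result follows by induction from the uniqueness part of Lemma~\ref{lema:invert-del0}. You have simply spelled out in detail the base case and inductive step that the paper leaves implicit.
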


\begin{proof}
This is clear if $k \geq 0$. For $k<0$, the result follows by
induction from the
uniqueness part of Lemma~\ref{lema:invert-del0}.
\end{proof}

We are now ready to define the homogenization of a basic Nilsson
solution of $\HA$.

\begin{definition}
\label{def:homog-sol}
Let 
$\phi = x^v \sum_{u \in C} x^u p_u(\log(x))$ be a basic Nilsson
solution of $\HA$ 
in the direction of a weight vector $w$, and let $\beta_0\in\CC$ a
homogenizing value for $A$, 
$\beta$ and $w$, so that
$v_0=\beta_0-\sum_{i=1}^n v_i$ is not an integer. We
define
\[
\rho(\phi) = \sum_{u \in C} \del_0^{|u|} 
\left[ x_0^{v_0} x^{v+u}
\widehat{p_u}(\log(x_0),\log(x_1),\dots,\log(x_n) )\right] ,
\]
where $\widehat{p_u}$ is obtained from $p_u$ as in~(\ref{eq:qu}).
\end{definition}

Note that if $\phi_1$ and $\phi_2$ are basic Nilsson solutions of
$\HA$ and $\lambda \in \CC$ is such that $\phi_1+\lambda \phi_2$ is also basic, then
$\rho(\phi_1+\lambda \phi_2) = \rho(\phi_1)+\lambda\rho(\phi_2)$, as $\del_0^k$ is linear. 
We now verify that the homogenization of a basic Nilsson solution of
$\HA$ is a basic Nilsson solution of $\HrA$.

\begin{proposition}\label{propo:Nilsson-to-Nilsson}
Let $w$ be a weight vector for $\HA$ and let $\beta_0$ be a fixed
homogenizing value for $A$, $\beta$ and $w$. 
For any   
$\phi = x^v \sum x^u p_u(\log(x))$ basic Nilsson solution of $\HA$
in the direction of $w$, the (formal) series $\rho(\phi)$ from
Definition~\ref{def:homog-sol} is a basic Nilsson solution of $\HrA$
in the direction of $(0,w)$.
We extend $\rho$ linearly to obtain a map
\[
\rho: \Fw(\HA) \to \mathscr{N}_{(0,w)}(H_{\rho(A)}(\beta_0,\beta)).
\]
\end{proposition}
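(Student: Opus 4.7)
The plan is to verify two things: that $\rho(\phi)$ has the shape of a basic Nilsson series in direction $(0,w)$, and that it is annihilated by all generators of $\HrA$. I expect the toric ideal portion to be the main obstacle.

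For the shape, I would begin by iteratively applying Lemma~\ref{lema:invert-del0} when $|u|<0$ (valid because $v_0\notin\ZZ$, which is precisely the homogenizing value hypothesis and avoids all forbidden values $-1,-2,\dots$) and differentiating directly when $|u|\geq 0$, obtaining
\[
\rho(\phi)=\sum_{u\in C} x_0^{v_0-|u|}\,x^{v+u}\,\tilde p_u(\log(x_0),\log(x)),
\]
with $\deg \tilde p_u=\deg p_u$. Setting $V=(v_0,v)$ and $U=(-|u|,u)$, which lies in $\ker_\ZZ(\rho(A))$ because the first row of $\rho(A)$ is $(1,\dots,1)$, this exhibits $\rho(\phi)$ as a Nilsson-series candidate centered at $V$. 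Condition~\ref{defformal2} is inherited from the bound on $\deg p_u$. Condition~\ref{defformal3} holds because the $U=0$ term is $x_0^{v_0}x^v\widehat{p_0}$, and $\widehat{p_0}\neq 0$ since setting $\log(x_0)=0$ recovers $p_0\neq 0$. For the support condition~\ref{defformal1}, I would invoke Remark~\ref{rem:simplebasic}(i): for $(0,w')$ near $(0,w)$, the inner product $(0,w')\cdot U$ equals $w'\cdot u$, which is strictly positive for $u\in C\setminus\{0\}$ and $w'$ close to $w$.

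For the Euler operators, I would show that each summand $T_u:=\partial_0^{|u|}[x_0^{v_0}x^{v+u}\widehat{p_u}]$ is annihilated by every $E_i-\beta_i$ of $\rho(A)$. The key calculation is that each defining linear form $-|\gamma_j|\log(x_0)+\gamma_j\cdot(\log(x_1),\dots,\log(x_n))$ of $\widehat{p_u}$ is killed by every $E_i$: for $i\geq 1$ because $\gamma_j\in\ker(A)$ and $E_i$ does not involve $x_0$; for $i=0$ because its coefficients sum to $-|\gamma_j|+|\gamma_j|=0$. The Euler weights on the monomial $x_0^{v_0}x^{v+u}$ are then $\beta_i$ for $i\geq 1$ and $\beta_0+|u|$ for $i=0$ (via $v_0=\beta_0-|v|$). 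Lemma~\ref{lema:commute} lets $\partial_0^{|u|}$ pass through $E_i-\beta_i$ for $i\geq 1$, and Lemma~\ref{lema:still-homog} (for $|u|<0$) together with the commutator $\partial_0 E_0=(E_0+1)\partial_0$ iterated (for $|u|>0$) shift the $E_0$-eigenvalue from $\beta_0+|u|$ down to $\beta_0$.

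The toric ideal is the hardest part. I would check annihilation against the generating set $\rho(\partial^a-\partial^b)=\partial_0^{(|a|-|b|)^+}\partial^a-\partial_0^{(|b|-|a|)^+}\partial^b$, ranging over $\partial^a-\partial^b\in I_A$. Applying Lemma~\ref{lema:commute} to pass $\partial^a,\partial^b$ past each $\partial_0^{|u|}$ inside $T_u$, and then using Lemma~\ref{lem:form-of-der} (treating $\log(x_0)$ as inert, since $\partial^a,\partial^b$ involve only $x_1,\dots,x_n$) together with the compatibility $\widehat{\partial^\nu p}=\partial^\nu\widehat{p}$ for $\nu\in\NN^n$ (immediate from formula~\eqref{eq:qu}), reduces the annihilation claim to a family of coefficient-wise identities, one for each monomial $x^w$. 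Each such identity is precisely the hat-map image of the corresponding coefficient identity arising from $(\partial^a-\partial^b)\phi=0$; the $\partial_0$-exponents that $\rho$ introduces in both the binomial and the $T_u$ conspire so that the reindexing $u\leftrightarrow u-(a-b)$ identifying matching terms in $\partial^a\phi=\partial^b\phi$ lifts verbatim (the extra shift in $|u|$ is absorbed exactly by $(|a|-|b|)^+-(|b|-|a|)^+=|a|-|b|$). The obstacle is not conceptual but organizational: one must simultaneously track the reindexing, the action of polynomial derivatives on the $\widehat{p_u}$, and how the factors $\partial_0^{|u|}$ modify powers of $x_0$.
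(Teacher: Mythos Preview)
Your approach mirrors the paper's almost exactly, but two points need correction.

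First, your verification of condition~\ref{defformal1} via Remark~\ref{rem:simplebasic}(\ref{rem:1}) is incomplete: you only check positivity of $\tilde w\cdot U$ for $\tilde w=(0,w')$, which is a hyperplane slice, not an open neighborhood of $(0,w)$ in $\RR^{n+1}$. The difficulty is that $|U_0|=|u|$ is unbounded as $u$ ranges over $C$, so an arbitrarily small perturbation in the zeroth coordinate could destroy positivity for large $u$. The paper handles this by passing to a finite Hilbert basis $\{\gamma_1,\dots,\gamma_m\}$ of $(\overline{\cC})^*\cap\ker_\ZZ(A)$: positivity of $\tilde w\cdot(-|\gamma_i|,\gamma_i)$ is an open condition in $\RR^{n+1}$ for each of the finitely many $i$, and positivity for every nonzero $u\in C$ then follows by writing $u$ as a non-negative integer combination of the $\gamma_i$.

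Second, your formula for $\rho(\partial^a-\partial^b)$ has the $\partial_0$ powers on the wrong terms: homogenization multiplies the \emph{lower}-degree monomial by the appropriate power of $\partial_0$, so the correct expression is
\[
\rho(\partial^a-\partial^b)=\partial_0^{(|b|-|a|)^+}\partial^a-\partial_0^{(|a|-|b|)^+}\partial^b,
\]
the swap of what you wrote. With this correction your reindexing argument goes through and matches the paper's, which phrases the same computation in terms of $\partial^{(-|\mu|,\mu)_+}-\partial^{(-|\mu|,\mu)_-}$ for $\mu=a-b\in\ker_\ZZ(A)$.
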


\begin{proof}
Recall our assumption that, if $w$ is a weight vector for $\HA$,
$(0,w)$ is a weight vector for $\HrA$.

We first show that the series $\rho(\phi)$ has the shape required in
Definition~\ref{defformal}. Conditions~\ref{defformal2} and~\ref{defformal3}
are clearly satisfied by the construction of the polynomials $\hat{p}_u$ and
Lemma~\ref{lem:form-of-der}. Thus, it is enough
to verify that $\rho(\phi)$ satisfies
condition~(\ref{rem:1}) from Remark~\ref{rem:simplebasic}. The support of
$\phi$ is in bijection with the support of $\rho(\phi)$ via $u \mapsto
(-|u|, u)$, which sends $\ker_\ZZ(A)$ into $\ker_\ZZ(\rho(A))$. We can assume
that $C$ is the intersection of $\ker_\ZZ(A)$ with the dual $\cC^*$ of an open
cone $\cC$ such that  its closure
$\overline{\cC}$ is a strongly convex rational polyhedral cone of maximal dimension.
Let $\{\gamma_1, \dots, \gamma_m\}$ be a Hilbert basis of 
$\cC^* \cap \ker_\ZZ(A)= (\overline{\cC})^* \cap \ker_\ZZ(A)$. 
Then $ w' \cdot \gamma_i > 0$ for all $w' \in \cC$ and all 
$i = 1, \dots, m$. Let $\delta > 0$
such that, for all $\be \in \RR_{>0}^{n+1}$ whose Euclidean distance
to the origin is $||\be|| < \delta$, and all $i=1,\dots, m$, we have  
$\big[ (0,w) + \be \big] \cdot (-|\gamma_i|, \gamma_i) > 0$.
It follows that for
any non zero $u \in C$ and any $\tilde{w}$ in the ball centered at
$(0,w)$ with radius $\delta$, we have 
$\tilde{w} \cdot (-|u|, u) > 0$,
which proves our claim.

Now we prove that $\rho(\phi)$ is a formal solution of $\HrA$.
Since $\widehat{p_u}$ belongs to the symmetric algebra of
$\ker_{\ZZ}(\rho(A))$, and 
\[
\rho(A) \cdot (v_0,v+u) =
(\beta_0-|v|+|v|+|u|, A \cdot (v+u)) = (\beta_0+|u|,\beta),
\] 
the term
$x_0^{v_0} x^{v+u} \widehat{p_u}(\log(x_0),\log(x_1),\dots,\log(x_n))$
is a solution of the system of Euler operators 
$\<E_0-(\beta_0+|u|),E_1-\beta_1,\dots, E_d-\beta_d\>$.
By Lemma~\ref{lema:still-homog} with $(s_0,s)=(v_0,v+u)$ and $k=-|u|$,
each term of $\rho(\phi)$ is therefore a solution of 
$\< E-(\beta_0+|u|-|u|), E_1-\beta_1,\dots,E_d-\beta_d \>$.

To verify that the elements of $I_{\rho(A)}$ annihilate $\rho(\phi)$,
first note that 
Lemma~\ref{lem:form-of-der} implies that, for any $\mu \in \ker_{\ZZ}(A)$,
\[
\del^{\mu_+} x^{v+u} p_{u}(\log(x)) = \del^{\mu_-}x^{v+u-\mu}p_{u-\mu}(\log(x)),
\]
because $\phi$ is a solution of $\HA$.

We claim that
\[
\del^{\mu_+} x_0^{v_0}x^{v+u} \widehat{p_{u}}(\log(x)) 
= \del^{\mu_-}x_0^{v_0}x^{v+u-\mu}\widehat{p_{u-\mu}}(\log(x)).
\]
To see this, use Lemma~\ref{lem:form-of-der} and the fact that,
if $i>0$ and
$p$ is an element of the symmetric algebra of $\ker_{\ZZ}(A)$, then
$\del_ip$ is also in the symmetric algebra of $\ker_{\ZZ}(A)$, and
$\del_i \widehat{p} = \widehat{\del_i p}$.

Now, using Lemma~{\ref{lema:commute}} with $(s_0,s)=(v_0,v+u)$ and
$k=|u|$, and the fact that 
$v_0 =\beta_0-|v| \notin \ZZ$,
we obtain from
\[
\del_0^{|u|}\del^{\mu_+} x_0^{v_0}x^{v+u} \widehat{p_{u}}(\log(x)) 
=
\del_0^{|\mu|}\del_0^{|u|-|\mu|}\del^{\mu_-}x_0^{v_0}x^{v+u-\mu}\widehat{
p_{u-\mu}}(\log(x)),
\]
that
\[
\del^{\mu_+} \left[ \del_0^{|u|} x_0^{v_0}x^{v+u}
\widehat{p_{u}}(\log(x)) \right] 
= \del_0^{|\mu|} \del^{\mu_-} 
\left[ \del_0^{|u|-|\mu|}x_0^{v_0}x^{v+u-\mu}
\widehat{p_{u-\mu}}(\log(x)) \right].
\]
Assuming  $|\mu|>0$, we conclude
\begin{align*}
\del^{(-|\mu|,\mu)_+}   \big[ \del_0^{|u|} x_0^{v_0}x^{v+u} &
\widehat{p_{u}}(\log(x)) \big] 
=  \\
& \del^{(-|\mu|,\mu)_-} 
\left[ \del_0^{|u|-|\mu|}x_0^{v_0}x^{v+u-\mu}\widehat{p_{u-\mu}}(\log(x))
\right].
\end{align*}
\end{proof}

The next result shows that $\rho$ is one to one. 
Therefore, the inverse map $\rho^{-1}$ allows us to obtain Nilsson solutions
of $\HA$ from Nilsson solutions of the regular holonomic system $\HrA$. 

\begin{theorem}
\label{thm:image of rho}
Let $w$ be a weight vector for $\HA$ and $\beta_0$ a homogenizing
value for $A$, $\beta$ and $w$.
The linear map 
\[
\rho:\Fw(\HA) \rightarrow {\mathscr{N}}_{(0,w)}(\HrA)
\]
is injective and its image is 
spanned by basic Nilsson solutions of $\HrA$ 
in the direction of $(0,w)$ associated
to standard pairs of $\inow(I_{\rho(A)})$ that pass through zero.
\end{theorem}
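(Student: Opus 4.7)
My plan is to prove injectivity first, and then establish the two inclusions describing the image.

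For injectivity, I would apply Lemma~\ref{thm:aux-ini-indep}. Starting from linearly independent $\phi_1,\dots,\phi_k \in \Fw(\HA)$, part (2) of that lemma provides linear combinations $\psi_i$ with $\CC$-linearly independent initial series $\inw(\psi_i) = x^{v_i} p_{i,0}(\log(x))$. The key observation is that every term $\del_0^{|u|}[x_0^{v_0} x^{v_i+u}\widehat{p_{i,u}}(\log(x_0),\log(x))]$ of $\rho(\psi_i)$ has $(0,w)$-weight equal to $w \cdot (v_i + u)$, since the $x_0$-exponent contributes zero, so $\inow(\rho(\psi_i)) = x_0^{v_{i,0}} x^{v_i} \widehat{p_{i,0}}(\log(x_0),\log(x))$. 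Because the lattice isomorphism $\ker_\ZZ(A) \to \ker_\ZZ(\rho(A))$ given by $\gamma \mapsto (-|\gamma|,\gamma)$ induces an isomorphism of symmetric algebras via $p \mapsto \widehat p$, linear independence of the $p_{i,0}$ transfers to the $\widehat{p_{i,0}}$, and hence to the $\inow(\rho(\psi_i))$. Lemma~\ref{thm:aux-ini-indep}(1) then yields linear independence of $\rho(\psi_1),\dots,\rho(\psi_k)$. This same initial-series computation proves the forward inclusion: if $\phi$ has exponent $v$, the exponent of $\rho(\phi)$ is $(v_0, v)$ with $v_0 = \beta_0 - |v| \notin \ZZ$ by the homogenizing value hypothesis, so by Lemma~\ref{lemma:fake-exps} the corresponding standard pair $(\del^\alpha, \sigma)$ of $\inow(I_{\rho(A)})$ must satisfy $0 \in \sigma$ (otherwise $v_0 = \alpha_0 \in \NN$, a contradiction).

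For the reverse inclusion, I build the preimage of a basic Nilsson solution $\psi$ of $\HrA$ associated to a standard pair passing through zero. Using the lattice isomorphism to reparametrize by $\ker_\ZZ(A)$, write $\psi = \sum_{u \in C} x_0^{v_0 - |u|} x^{v+u} q_u(\log(x_0),\log(x))$, with each $q_u$ in the symmetric algebra of $\ker_\ZZ(\rho(A))$ (by Proposition~5.2 in~\cite{S}) and $(v_0,v)$ the exponent of $\psi$. For each $u$ I solve
\[
\del_0^{|u|}\bigl[x_0^{v_0} x^{v+u}\widehat{p_u}(\log(x_0),\log(x))\bigr] = x_0^{v_0-|u|} x^{v+u} q_u(\log(x_0),\log(x))
\]
uniquely for a polynomial $\widehat{p_u}$: when $|u| \geq 0$ this is the triangular system with operator $\prod_{k=0}^{|u|-1}(v_0 - k + \partial_{t_0})$, invertible since $v_0 \neq 0,1,\dots,|u|-1$; when $|u| < 0$ it follows by iterating Lemma~\ref{lema:invert-del0}, whose hypotheses are met because $v_0 \notin \ZZ$. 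Since $\partial_{t_0}$ preserves the symmetric algebra of $\ker_\ZZ(\rho(A))$ and is nilpotent on each graded piece, formal Neumann inversion places $\widehat{p_u}$ in this algebra, corresponding to a unique $p_u \in \mathrm{Sym}(\ker_\ZZ(A))$. Define $\phi := \sum_{u \in C} x^{v+u} p_u(\log(x))$; then $\rho(\phi) = \psi$ by construction, and $\phi$ satisfies the support, log-degree, and $p_0 \neq 0$ conditions (the latter via the homogenization bijection applied to $q_0 \neq 0$).

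The main obstacle is verifying that $\phi$ is annihilated by $H_A(\beta)$. The Euler relations $(E_i-\beta_i)\phi = 0$ for $i=1,\dots,d$ follow from $A(v+u) = \beta$ and $p_u \in \mathrm{Sym}(\ker_\ZZ(A))$, again by Proposition~5.2 of~\cite{S}. The delicate step is the toric annihilation $(\del^{\mu_+}-\del^{\mu_-})\phi = 0$ for $\mu \in \ker_\ZZ(A)$: taking $|\mu|\geq 0$ for definiteness, the element $\del^{\mu_+} - \del_0^{|\mu|}\del^{\mu_-}$ lies in $I_{\rho(A)}$ and annihilates $\psi$. Using Lemma~\ref{lema:commute} to push $\del_0^{|u|}$ past $\del^{\mu_\pm}$ and reindexing the right-hand sum by $u \mapsto u - \mu$ (which aligns $x$- and $x_0$-monomial supports via the identity $|u-\mu|+|\mu|=|u|$), I extract the term-by-term equality
\[
\del_0^{|u|}\bigl[\del^{\mu_+}(x_0^{v_0}x^{v+u}\widehat{p_u}(\log(x)))\bigr] = \del_0^{|u|}\bigl[\del^{\mu_-}(x_0^{v_0}x^{v+u-\mu}\widehat{p_{u-\mu}}(\log(x)))\bigr].
\]
The invertibility of $\del_0^{|u|}$ on expressions of the form $x_0^{v_0}\cdot(\text{polynomial in logs})$ (granted by $v_0 \notin \ZZ$) cancels the outer operator, and specializing $x_0 = 1$, $\log(x_0) = 0$ recovers $\del^{\mu_+}(x^{v+u}p_u(\log(x))) = \del^{\mu_-}(x^{v+u-\mu}p_{u-\mu}(\log(x)))$. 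Summing over $u$ and reindexing the right-hand side by $u' = u - \mu$ yields $\del^{\mu_+}\phi = \del^{\mu_-}\phi$, completing the proof.
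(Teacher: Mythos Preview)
Your proof is correct and follows essentially the same strategy as the paper. The injectivity argument via Lemma~\ref{thm:aux-ini-indep} and the identification of $\inow(\rho(\phi))$ matches the paper's proof exactly (the paper phrases the independence of the $\widehat{p_{i,0}}$ via the specialization $\hat p(0,\log x)=p(\log x)$, while you invoke the lattice isomorphism of symmetric algebras; these are equivalent). For the reverse inclusion, the paper merely says ``the proof of this claim is a reversal of the arguments in Proposition~\ref{propo:Nilsson-to-Nilsson}'', whereas you spell out that reversal in full: inverting $\del_0^{|u|}$ term by term, verifying that the resulting $\widehat{p_u}$ stays in the symmetric algebra, and then running the toric-annihilation computation backwards using Lemma~\ref{lema:commute} and the specialization $x_0=1$. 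Your added detail that the Neumann inverse of $\prod_k(v_0-k+\partial_{t_0})$ preserves $\mathrm{Sym}(\ker_\ZZ(\rho(A)))$, and hence yields a well-defined $p_u\in\mathrm{Sym}(\ker_\ZZ(A))$, is a point the paper leaves implicit.

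One small remark: both you and the paper assert $v_0=\beta_0-|v|\notin\ZZ$ for the exponent of a basic Nilsson solution $\psi$ of $\HrA$ associated to a standard pair $(\del^\alpha,\sigma)$ with $0\in\sigma$, invoking the homogenizing-value hypothesis. Strictly speaking this requires knowing that $v$ is a fake exponent of $\HA$, since Definition~\ref{defi:homog-value} only constrains $\beta_0-|v|$ for such $v$. This does hold: the fact that $\alpha+\NN^{\sigma}$ consists of standard monomials of $\inow(I_{\rho(A)})$ implies (via Lemma~\ref{lema1}) that $(\alpha_1,\dots,\alpha_n)+\NN^{\sigma\setminus\{0\}}$ consists of standard monomials of $\inw(I_A)$, so $v$ is a fake exponent of $\HA$ by Lemma~\ref{lemma:fake-exps}. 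The paper is equally brief here, so this is not a defect of your proof relative to theirs.
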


\begin{proof}
If $\phi=\sum_{u\in C} x^{v+u} p_u(\log(x))$ is a basic Nilsson
solution of $\HA$ in the direction of $w$, then we have
$\inow(\rho(\phi)) = x_0^{v_0} x^v \widehat{p_0}(\log(x_0),\dots,\log(x_n))$.
Choose a basis of $\Fw(\HA)$ consisting of basic Nilsson series
whose initial terms are linearly independent (use the second part of
Lemma~\ref{thm:aux-ini-indep}). Then the initial series of their images
are also linearly independent, as
$\hat{p}(1,\log(x)) = p(\log(x))$.
Now apply the first part of 
Lemma~\ref{thm:aux-ini-indep} to complete the proof that $\rho$ is injective.

Observe that, by construction, $\rho(\Fw(\HA))$ is contained in
the span of the basic Nilsson solutions of $\HrA$ corresponding to
standard pairs that pass through zero, because the powers of $x_0$
appearing in $\rho(\phi)$ are non integer for any basic Nilsson
solution $\phi$ of $\HA$.

To show the other inclusion,
let $\psi$ be a basic Nilsson solution of $\HrA$ in the direction
of $(0,w)$ corresponding to a standard pair that passes through zero,
with starting exponent $(\beta_0 - |v|, v)$.
We wish to prove that $\psi$ can be dehomogenized.
We can write
\[
\psi=x_0^{\beta_0-|v|}x^v 
\sum_{(-|u|,u) \in \ker_{\ZZ}(\rho(A))} x_0^{-|u|}x^u h_u(\log(x)),
\]
where $\beta_0-|v|$ is not an integer because $\beta_0$ is a
homogenizing value for $A$, $\beta$ and $w$.
Then we can perform
\[
\del_0^{|u|}\left(x_0^{\beta_0-|v|-|u|}x^{v+u}h_u(\log(x)) \right)= 
x_0^{\beta_0-|v|}x^{v+u}\widehat{p_u}(\log(x))
\]
and use this to define $\phi = \sum x^{v+u} p_u(\log(x))$ (with the same
relationship between $p$ and $\widehat{p}$ as in~\eqref{eq:qu}). We
claim that $\phi$ is
a basic Nilsson solution of $\HA$ and
$\psi = \rho(\phi)$. The proof of this claim is a reversal of the arguments in
Proposition~\ref{propo:Nilsson-to-Nilsson}. 
\end{proof}

\begin{definition}
If $\psi\in{\mathscr{N}}_{(0,w)}(\HrA)$ and $\phi\in\Fw(\HA)$ are
such that $\rho(\phi)=\psi$, we call 
$\phi$ a \textbf{dehomogenized Nilsson series}, or say that $\phi$ is the 
\textbf{dehomogenization} of $\psi$.  
\end{definition}

\section{Hyper\-geo\-metric Nilsson series for generic parameters}
\label{sec:generic-parameters}

When the parameter vector $\beta$ is sufficiently generic (see
Convention~\ref{remark:genericity of beta}),
the Nilsson solutions of $\HA$ are completely determined by the
combinatorics of the initial ideals of $I_A$. The goal of this section
is to study this case in detail.

In order to precisely describe the genericity condition used in
this section, we need to understand the initial ideal
$\inww(\HA)$ for generic parameter vectors.

\begin{lemma}
\label{l:genini}
For $\beta$ generic, 
\[
\inww(H_A(\beta)) = \inw(I_A) + \langle E - \beta \rangle.
\]
Therefore, all the fake exponents of $\HA$ with respect to $w$ are true exponents.
Moreover, under suitable genericity conditions for $\beta$, a better
description of $\inww(\HA)$ is available, namely
\[
\inww(\HA)=\tp(\inw(I_A))+\< E -\beta \>.
\]
\end{lemma}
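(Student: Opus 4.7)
My plan is to prove both equalities by pairing a trivial inclusion with a holonomic rank comparison, and then upgrading from $\inw(I_A)$ to $\tp(\inw(I_A))$ using the structure of embedded standard pairs. For the trivial direction, observe that for any $\beta$ we have $\inw(I_A)+\langle E-\beta\rangle\subseteq \inww(\HA)$: each toric generator $\del^u-\del^v\in I_A$ lies in $\CC[\del]$, so $\inw(\del^u-\del^v)=\inww(\del^u-\del^v)\in\inww(\HA)$, while each Euler operator has $(-w,w)$-weight zero (since each summand $a_{ij}x_j\del_j$ contributes $-w_j+w_j=0$) and hence equals its own initial form.

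For the reverse inclusion in the first equality, I would run a rank sandwich. Both ideals are $A$-homogeneous and holonomic, and Theorem~2.1 in~\cite{SST} gives $\rank(\inww(\HA))\leq \rank(\HA)$. By Lemma~\ref{lemma:fake-exps}, the monomial solutions $x^v$ of $\inw(I_A)+\langle E-\beta\rangle$ are parameterized by fake exponents, which for generic $\beta$ are in bijection with the top-dimensional standard pairs of $\inw(I_A)$. Since the number of such pairs is $\vol(A)$ and $\rank(\HA)=\vol(A)$ for generic $\beta$ by GKZ, the chain $\vol(A)\leq\rank(\inw(I_A)+\langle E-\beta\rangle)\leq\rank(\inww(\HA))\leq\vol(A)$ forces all ranks to agree. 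The ``fake equals true exponent'' statement then follows immediately, since both sets are the monomial solutions of ideals that now coincide.

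For the second equality with $\tp(\inw(I_A))$, I would strengthen the genericity on $\beta$ to avoid the resonance hyperplanes attached to the embedded primes of $\inw(I_A)$, and then show that every monomial $\del^m\in\tp(\inw(I_A))\setminus\inw(I_A)$ is reducible, modulo $\langle E-\beta\rangle$, into $\inw(I_A)$. The mechanism is that such $\del^m$ sits on an embedded standard pair whose indicial equations become nonsingular off explicit arithmetic conditions on $\beta$, so the embedded primary components are absorbed by the Euler operators and the monomial enlargement from $\inw(I_A)$ to $\tp(\inw(I_A))$ happens inside $\inww(\HA)$.

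The main obstacle I anticipate is promoting the rank coincidence to a genuine equality of $D$-ideals: in general, two nested holonomic $D$-ideals of the same rank need not coincide. What I expect to save the argument is the $A$-homogeneity of both ideals together with the purely monomial structure of $\inw(I_A)$: these reduce the comparison, graded piece by graded piece under $\deg_A$, to a finite-dimensional check that can be pinned down by standard-pair bookkeeping after passing modulo $\langle E-\beta\rangle$.
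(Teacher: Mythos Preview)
Your rank sandwich has a genuine gap that breaks the argument precisely in the case this lemma is designed to cover, namely when $I_A$ is \emph{not} homogeneous. You assert that the number of top-dimensional standard pairs of $\inw(I_A)$ is $\vol(A)$; this is true when $I_A$ is homogeneous (Gr\"obner degeneration preserves degree, and the degree of the toric variety is the normalized volume), but it fails in general. Example~\ref{ejem:dimnilmenorrank} already shows this: for $A=\begin{smallmatrix}1&0&1\\0&1&1\end{smallmatrix}$ and $w$ a perturbation of $(1,1,3)$, we have $\inw(I_A)=\langle\del_3\rangle$, which has a single top-dimensional standard pair, while $\vol(A)=\rank(\HA)=2$. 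So your chain $\vol(A)\leq\rank(\inw(I_A)+\langle E-\beta\rangle)$ fails at the very first step. In fact Corollary~\ref{propo:degree computation} and Corollary~\ref{coro:notalways} later in the paper make explicit that $\rank(\inww(\HA))=\deg(\inw(I_A))$ can be strictly smaller than $\vol(A)$. A secondary issue: the inclusion $\inw(I_A)+\langle E-\beta\rangle\subseteq\inww(\HA)$ gives $\rank(\inww(\HA))\leq\rank(\inw(I_A)+\langle E-\beta\rangle)$, not the inequality you wrote; so even the shape of the sandwich is off.

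The paper's proof is entirely different: it simply invokes Theorems~3.1.3 and~3.2.11 of~\cite{SST}, noting that those arguments go through verbatim because $I_A$ is $A$-graded (even when it is not $\ZZ$-homogeneous). The SST argument is a direct Gr\"obner basis computation: one shows that a Gr\"obner basis of $I_A$ together with the Euler operators is already a Gr\"obner basis of $\HA$ for generic $\beta$, by checking that the relevant S-pairs reduce to zero. This bypasses rank counting entirely, which is essential here since the rank numerology you rely on is specific to the homogeneous case. Your acknowledged obstacle---that equal ranks of nested holonomic ideals need not force equality---is real, and the SST approach avoids it altogether rather than resolving it.
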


\begin{proof}
This is a version of Theorems~3.1.3 and~3.2.11 in~\cite{SST}
for non homogeneous toric ideals.
The same proofs hold, since $I_A$ is always $A$-graded.
\end{proof}

\begin{convention}
\label{remark:genericity of beta}
In this section, we assume that $\beta$ is generic enough that
the second displayed formula in 
Lemma~\ref{l:genini} is satisfied, so that all exponents of $\HA$ with
respect to $w$
come from top-dimensional standard pairs of $\inw(I_A)$.

We also require that 
the only integer coordinates of these exponents are the ones imposed
by the corresponding standard pairs. In particular, the
exponents of $\HA$ with respect to $w$ have no negative integer coordinates. 

Finally, we ask that no two exponents differ by an integer vector.
Note that these integrality conditions force us to avoid
an infinite (but locally finite) collection of affine spaces.
\end{convention}

The following result holds without homogeneity
assumptions on the matrix $A$.

\begin{proposition}
\label{propo:phiv}
For any $v\in(\CC \minus \ZZ_{<0})^n$ such that $A\cdot v=\beta$, 
the formal series
\begin{equation}
\label{phisubv}
 \phi_v=\sum_{u\in \ker_{\ZZ}(A)}\frac{[v]_{u_-}}{[u+v]_{u_+}}x^{u+v}
\end{equation}
where
\begin{equation}
\label{eqn:phisubv-coeffs}
[v]_{u_-}=\prod_{u_i<0}\prod_{j=1}^{-u_i}(v_i-j+1); \quad
[u+v]_{u_+}=\prod_{u_i>0}\prod_{j=1}^{u_i}(v_i+j)
\end{equation}
is well defined and is annihilated by the hyper\-geo\-metric $D$-ideal
$\HA$. 

If $v$ is a fake exponent of $\HA$ corresponding to a standard
pair $(\del^a,\sigma)$ of $\inw(I_A)$, then $\phi_v$ is a basic Nilsson
solution of $\HA$.  
If moreover $\beta$ is a generic parameter vector, the
support of $\phi_v$ is the set 
\[
\supp(\phi_{v})
=\{u\in \ker_{\ZZ}(A) \mid u_i+v_i\geq 0 \quad \forall
i\notin\sigma\}.
\]
Thus,
\begin{equation}
\label{supp}
\phi_{v}
=\sum_{\{u\in \ker_{\ZZ}(A) \mid u_i+v_i\geq 0 \;\; \forall i\notin\sigma\}}
\frac{[v]_{u_-}}{[u+v]_{u_+}}x^{u+v}.
\end{equation}
\end{proposition}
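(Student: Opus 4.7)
The plan is to establish the three assertions of the proposition in order: well-definedness of $\phi_v$ and its annihilation by $\HA$; the support formula when $v=\beta^{(\del^a,\sigma)}$; and the verification that $\phi_v$ is a basic Nilsson solution in the direction of $w$.

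For well-definedness, the denominator $[u+v]_{u_+}=\prod_{u_i>0}\prod_{j=1}^{u_i}(v_i+j)$ vanishes only if some $v_i\in\{-1,\dots,-u_i\}\subset\ZZ_{<0}$, which is excluded by hypothesis. To see that $\HA\cdot\phi_v=0$, the Euler relations act trivially since $A(u+v)=\beta$ for every $u\in\ker_{\ZZ}(A)$, while for each $\mu\in\ker_{\ZZ}(A)$ the identity $\del^{\mu_+}\phi_v=\del^{\mu_-}\phi_v$ reduces to a direct computation with the Pochhammer-type coefficients in~\eqref{eqn:phisubv-coeffs}; this is the classical Gamma-series identity (see Proposition~5.2 in~\cite{S}), and requires no homogeneity of $I_A$.

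For the support formula, I would pinpoint when the numerator $[v]_{u_-}$ vanishes, since the denominator is always nonzero by the argument above. Under Convention~\ref{remark:genericity of beta}, the integer coordinates of $v=\beta^{(\del^a,\sigma)}$ are precisely $v_i=a_i\in\NN$ for $i\notin\sigma$. A factor $v_i-j+1$ in $[v]_{u_-}$ can therefore vanish only for $i\notin\sigma$ with $u_i<0$, and it vanishes exactly when $v_i+u_i<0$; combined with the automatic inequality when $u_i\ge 0$, this yields the support $\{u\in\ker_{\ZZ}(A):u_i+v_i\ge 0\text{ for all }i\notin\sigma\}$. The remaining two conditions in Definition~\ref{defformal} are immediate: the $p_u$ are constants (no logarithms), so they have uniformly bounded degree $0$, and $p_0=[v]_{0_-}/[v]_{0_+}=1\neq 0$.

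The main step, and the one I expect will require the most care, is the first Nilsson condition: $\supp(\phi_v)\subseteq\cC^*$. I would argue by contradiction. Fix $w'\in\cC$ and a nonzero $u\in\supp(\phi_v)$; since ${\rm in}_{w'}(I_A)=\inw(I_A)$ is a monomial ideal, $w'\cdot u\neq 0$. Assuming $w'\cdot u<0$, the $(-w',w')$-initial form of the binomial $\del^{u_+}-\del^{u_-}\in I_A$ is $\del^{u_-}$, forcing $\del^{u_-}\in\inw(I_A)$. On the other hand, the support condition $u_i+a_i\ge 0$ for $i\notin\sigma$ gives $u_{i,-}\le a_i$ there, so I can define $m\in\NN^n$ by $m_i=a_i-u_{i,-}$ for $i\notin\sigma$ and $m_i=0$ for $i\in\sigma$. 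Then $u_-+m$ has $\sigma^c$-coordinates equal to $a_i$ and $\sigma$-coordinates equal to $u_{i,-}$, placing it in $a+\NN^{\sigma}$; thus $\del^{u_-+m}$ is a standard monomial for $(\del^a,\sigma)$ and lies outside $\inw(I_A)$, contradicting the divisibility $\del^{u_-}\mid\del^{u_-+m}$. Hence $w'\cdot u>0$ for every $w'\in\cC$, and $u\in\cC^*\cap\ker_{\ZZ}(A)$ as required.
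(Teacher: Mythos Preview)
Your argument is correct and follows the same path as the paper, which simply cites Proposition~3.4.1, Theorem~3.4.2 and Lemma~3.4.6 in~\cite{SST} for the first and third assertions and then handles the Nilsson condition by the open-cone trick; your last paragraph is precisely the standard-monomial argument behind the proof of Theorem~3.4.2 in~\cite{SST}. Two small points: the toric annihilation is Proposition~3.4.1 in~\cite{SST}, not Proposition~5.2 in~\cite{S} (the latter concerns the $p_u$ lying in the symmetric algebra of $\ker_{\ZZ}(A)$); and the implication ``$\inw(I_A)$ monomial $\Rightarrow w'\cdot u\neq 0$'' is not immediate, since a binomial lying in a monomial ideal merely forces both of its terms into the ideal, but this does not hurt you---in the case $w'\cdot u=0$ one still has $\del^{u_-}\in\inw(I_A)$ and your contradiction with the standard pair $(\del^a,\sigma)$ goes through unchanged.
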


\begin{proof}
This statement is essentially a combination of Proposition~3.4.1,
Theorem~3.4.2 and 
Lemma~3.4.6 in~\cite{SST}, which do not need homogeneity for $I_A$. It only
remains to be checked  
that $\phi_v$ is a basic Nilsson solution of $\HA$ in the direction of
$w$. To see this, note that 
$u\in\supp(\phi_v)$  implies $u\cdot w\geq 0$ as is shown in the proof 
of Theorem~3.4.2 in~\cite{SST}. 
Since $v$ is a fake exponent of $\HA$ with respect to any $w'$ in an open
neighborhood of $w$, 
we have in fact that $u\cdot w > 0$ for any nonzero $u$ in the support
of $\phi_v$. 
This argument shows that $\phi_v$ satisfies the first requirement from
Definition~\ref{defformal}. The remaining
conditions are readily verified.
\end{proof}

\subsection{A summary of known results for homogeneous $I_A$ and
  generic $\beta$} 
\label{subsec:homog}

In this brief subsection we recall combinatorial features and
convergence results for $A$-hyper\-geo\-metric functions under
the assumption that the toric ideal $I_A$ is homogeneous, which
implies that $\HA$ is regular holonomic. This material comes from~\cite{SST}.

For the purposes of this subsection only, we assume that 
the vector $(1, \dots, 1)$ is contained in the rational rowspan of $A$, 
or equivalently, that
the toric ideal $I_A$ is homogeneous with respect to the usual
$\ZZ$-grading in $\CC[\del]$. We keep our weight vector $w$, and we assume
the genericity on $\beta$ required in
Convention~\ref{remark:genericity of beta}. Later we apply what follows to
$\rho(A)$, $(\beta_0,\beta)$, and $(0,w)$.

If $v = \beta^{(\del^{\alpha},\sigma)}\in \CC^n$ is the exponent
of $\HA$ associated to a (top dimensional) standard pair $(\del^{\alpha},\sigma)$ of
$\inw(I_A)$, 
the series $\phi_v=\phi_{\beta^{(\del^{\alpha},\sigma)}}$ converges
in an open set $\mathscr{U}_{w, \be}$ of the
form~\eqref{eq:Uw} for some $\be \in \RR_{>0}^{n-d}$ 
by Theorem~2.5.16 in~\cite{SST}.

Let $\conv(A)$ be the convex hull of the columns $a_1,\dots,a_n$ of
$A$. 
Since the rational rowspan of $A$ contains $(1,\dots,1)$, there
is a hyperplane $H$ off the origin such that $\conv(A) \subseteq H$,
and as $A$ has full rank, $\conv(A)$ has dimension $d-1$.
The \emph{normalized volume} $\vol(A)$ is the Euclidean volume (in $H$) of
$\conv(A)$ normalized so that the unit simplex in the lattice $\ZZ A
\cap H$ has volume one. As we have assumed that $\ZZ A = \ZZ^d$, the
normalization is achieved by multiplying the Euclidean volume of
$\conv(A)$ by $(d-1)!$. 
We think of $A$, not just as a matrix, but as the point configuration 
$\{a_1,\dots,a_n\} \subset \ZZ^d$. A vector $w \in \RR_{>0}^{n}$
induces a subdivision $\Delta_w$ of the configuration $A$, by
projecting the lower hull of 
$\conv(\{(w_i,a_i) \mid i=1,\dots,n\})$ onto
$\conv(A)$ (see Chapter~8 of~\cite{Stu} for details). 
If $w$ is generic, $\Delta_w$ is a triangulation of $A$. Such
triangulations are usually called regular, but we use 
the alternative term \emph{coherent}. By Theorem~8.3 in~\cite{Stu},
the radical ideal $\sqrt{\inw(I_A)}$ is the Stanley--Reisner
ideal of the triangulation $\Delta_w$,
whose facet set is 
$
\{ \sigma \mid (\del^{\alpha},\sigma) \in \cT(\inw(I_A)) \text{ for
  some } \alpha \}.
$
We always write triangulations of $A$ as simplicial complexes
on $\{1,\dots,n\}$, but think of them geometrically: a simplex
$\sigma$ in such a triangulation corresponds to the geometric simplex
$\conv(\{a_i \mid i \in \sigma\})$.

The set
\begin{equation}
\label{eqn:basis}
\{ 
\phi_{\beta^{(\del^{\alpha},\sigma)}} \mid (\del^{\alpha},\sigma) \in
{\mathscr{T}}(\inw(I_A)) 
\}
\end{equation}
consists of 
$\rank(\inww(\HA))= \# \cT(\inw(I_A)) = \deg(I_A)=\vol(A)$
linearly independent series solutions of $\HA$ 
(linear independence follows from Lemma~\ref{thm:aux-ini-indep})
which have a common domain of convergence. In addition, the fact that
$I_A$ is homogeneous implies that $\HA$ is regular 
holonomic~\cite{H}, and therefore by Theorem 2.5.1 in~\cite{SST}, we have that
 $\rank(\inww(\HA)) = \rank(\HA)$. Thus
\eqref{eqn:basis} is, in fact, a basis for the (multivalued) holomorphic
solutions of $\HA$ in an open set of the form $\mathscr{U}_{w, \be}$ defined in~\eqref{eq:Uw} below.

\subsection{The general case}
\label{subsec:non homogeneous}

We now drop the homogeneity assumption on $I_A$, but keep the
genericity assumption for $\beta$. 
In this subsection, we describe the space $\Fw(\HA)$ using the
homogenization map $\rho$ defined in Section~\ref{sec:invert-der}.
An explicit basis for $\Fw(\HA)$ is constructed using 
the exponents of the ideal $\HA$. 
Our first step is to relate the exponents of $\HA$ with respect to $w$
to the exponents of $\HrA$ with respect to $(0,w)$.

The following is a well known result, whose proof we include for the
sake of completeness.

\begin{lemma}
\label{lema1}
Let $I \subset \CC[\del_1,\dots,\del_n]$ be an ideal and let
$\rho(I) \subset \CC[\del_0,\del_1,\dots,\del_n]$ be its
homogenization. 
Let $w \in \RR_{\geq 0}^n$ sufficiently generic so that $\inw(I)$
and $\inow(\rho(I))$ are both monomial ideals.
Suppose that 
\[
\inow(\rho(I))=\bigcap Q_i
\]
is a primary decomposition of the monomial ideal $\inow(\rho(I))$.
Then
\[
\inw(I)= \bigcap_{{\scriptstyle \del_0 \notin\sqrt{Q_i}}} 
\langle f(1,\del_1,\ldots,\del_n) \mid f\in Q_i\rangle
\]
is a primary decomposition of the monomial ideal $\inw(I)$.
\end{lemma}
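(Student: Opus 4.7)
The plan is to realize the claimed identity as the effect of the dehomogenization homomorphism $\pi\colon\CC[\del_0,\del_1,\dots,\del_n]\to\CC[\del_1,\dots,\del_n]$ sending $\del_0\mapsto 1$. Three facts combine to prove everything: (i) $\pi(\inow(\rho(I)))=\inw(I)$; (ii) $\pi$ commutes with finite intersections of monomial ideals; (iii) a monomial primary ideal $Q\subset\CC[\del_0,\dots,\del_n]$ dehomogenizes to the unit ideal exactly when $\del_0\in\sqrt{Q}$, and otherwise to a monomial primary ideal of $\CC[\del_1,\dots,\del_n]$ whose radical is $\pi(\sqrt{Q})$.

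For (i), I would start with the single--element computation. Since $\rho(f)=\sum_u c_u\,\del_0^{\deg(f)-|u|}\del^u$ is $\ZZ$-homogeneous and the $(0,w)$-weight of the term $c_u\,\del_0^{\deg(f)-|u|}\del^u$ equals $w\cdot u$, one has
\[
\inow(\rho(f))=\del_0^{\deg(f)-\deg(\inw(f))}\,\rho(\inw(f)),
\]
so $\pi(\inow(\rho(f)))=\inw(f)$. To pass from polynomials to ideals I would invoke the standard fact that if $G=\{g_1,\dots,g_r\}$ is a Gr\"obner basis of the $\ZZ$-homogeneous ideal $\rho(I)$ with respect to $(0,w)$, then the dehomogenizations $\{\pi(g_1),\dots,\pi(g_r)\}$ form a Gr\"obner basis of $I$ with respect to $w$. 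Applying $\pi$ to $\inow(\rho(I))=\langle \inow(g_i)\rangle$ and using the monomial formula above then yields $\pi(\inow(\rho(I)))=\langle \inw(\pi(g_i))\rangle=\inw(I)$.

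For (iii), I would use the fact that every monomial primary ideal has the shape $Q=\langle \del_j^{a_j}:j\in\tau\rangle$ for some $\tau\subseteq\{0,1,\dots,n\}$, with $\sqrt{Q}=\langle \del_j:j\in\tau\rangle$. If $0\in\tau$, then $\del_0^{a_0}\in Q$ gives $1\in\pi(Q)$, so $\pi(Q)=\CC[\del_1,\dots,\del_n]$. If $0\notin\tau$, then $\pi(Q)=\langle \del_j^{a_j}:j\in\tau\rangle$ is manifestly a monomial primary ideal of $\CC[\del_1,\dots,\del_n]$ with radical $\pi(\sqrt{Q})$; in particular, distinct surviving components retain distinct radicals, so no component becomes redundant. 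For (ii), the intersection of monomial ideals is generated by the least common multiples of pairs of monomial generators; since $\pi$ sends $\del_0^{a_0}\del^u$ to $\del^u$ and commutes with taking lcms of monomials, one gets $\pi(\bigcap_i Q_i)=\bigcap_i\pi(Q_i)$.

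Combining the three facts, $\inw(I)=\pi(\inow(\rho(I)))=\pi(\bigcap Q_i)=\bigcap \pi(Q_i)$, and deleting the trivial factors (those with $\del_0\in\sqrt{Q_i}$) yields precisely the asserted formula, which by (iii) is a primary decomposition of $\inw(I)$. The only nonroutine step is (i); all the remaining work consists of transparent monomial manipulations. I expect the Gr\"obner--basis compatibility of homogenization with the weight $(0,w)$ to be the main technical point, but it is classical and can be cited rather than rederived.
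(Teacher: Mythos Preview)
Your approach is the same as the paper's: both hinge on the identity $\inw(I)=\pi(\inow(\rho(I)))$ (your step (i)), followed by the observation that a monomial primary component $Q$ dehomogenizes to the unit ideal precisely when some generator involves $\del_0$, i.e.\ when $\del_0\in\sqrt{Q}$. The paper's proof is terser---it does not spell out your step (ii) or the Gr\"obner-basis justification---but the content is identical.

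One small correction: in (iii) you assert that every monomial primary ideal has the form $\langle \del_j^{a_j}:j\in\tau\rangle$. This is false; that is the shape of an \emph{irreducible} monomial ideal, not a primary one (e.g.\ $\langle \del_1^2,\del_1\del_2,\del_2^2\rangle$ is $\langle\del_1,\del_2\rangle$-primary but not of this form). What you actually need---and what the paper implicitly uses---is that a monomial primary ideal with radical $\langle \del_j:j\in\tau\rangle$ is generated by monomials supported only on the variables $\del_j$, $j\in\tau$. Hence if $0\notin\tau$ then $\pi$ fixes $Q$ and $\pi(Q)$ is still primary in $\CC[\del_1,\dots,\del_n]$, while if $0\in\tau$ then $\del_0^k\in Q$ for some $k$ and $\pi(Q)=(1)$. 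With that fix your argument goes through unchanged.
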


\begin{proof}
Let $f \in I$ such that $\inw(f)$ is a monomial.
Then $\inow(\rho(f)) = \del_0^h\ \inw(f)$ for some
$h \in \NN$, with $h=0$ if $f$ is homogeneous.
Therefore $\inw(I)$ is obtained 
by setting $\del_0 \mapsto 1$ 
in the generators of $\inow(\rho(I))$. 
Now the result follows by observing that if $Q$ 
is primary monomial
ideal one of whose
generators is divisible by $\del_0$, then $Q$
must contain a power of $\del_0$ as a minimal generator.
\end{proof}

\begin{notation} \label{not:rhobeta0}
Given $\beta_0 \in \CC$ and $v \in \CC^n$, we set 
\[\label{eq:beta0}
\rho_{\beta_0}(v) = (\beta_0 -\sum_{i=1}^n v_i, v) = (\beta_0-|v|,v).\]
In particular,
$\rho_{0} = \rho$ maps $\ker_{\ZZ}(A)$ to $\ker_{\ZZ}(\rho(A))$. 
\end{notation}

\begin{lemma}
\label{rhobeta0}
Let $\beta_0, \beta$  generic and $w$ a weight vector for $\HA$. Then the map $v
\mapsto \rho_{\beta_0}(v)$ is a bijection between the set of exponents
of $\HA$  with 
respect to $w$ and the set of exponents of $H_{\rho(A)}(\beta_0, \beta)$ with respect
to $(0,w)$ associated to 
standard pairs that pass through zero.
\end{lemma}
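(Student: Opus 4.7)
The plan is to combine Lemma~\ref{lema1} (which relates the primary decompositions of $\inw(I_A)$ and $\inow(I_{\rho(A)})$) with Lemma~\ref{lemma:fake-exps} (which parametrizes fake exponents in terms of standard pairs), and then use Convention~\ref{remark:genericity of beta} applied to both $H_A(\beta)$ with respect to $w$ and $H_{\rho(A)}(\beta_0,\beta)$ with respect to $(0,w)$ to identify (fake) exponents with true exponents that come from top-dimensional standard pairs. This requires choosing $\beta$ and $\beta_0$ generically enough that the conclusions of Lemma~\ref{l:genini} hold for both systems simultaneously, which is an open dense condition.

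First I would establish a bijection at the level of top-dimensional standard pairs. By Lemma~\ref{lema1}, the minimal primes of $\inow(I_{\rho(A)})$ that do not contain $\del_0$ give, after the substitution $\del_0 \mapsto 1$, precisely the minimal primes of $\inw(I_A)$. A top-dimensional standard pair of $\inow(I_{\rho(A)})$ that passes through zero has the form $(\del^\alpha,\sigma)$ with $0\in\sigma$ (so in particular $\alpha_0=0$); its radical $\langle \del_j \mid j\in \{0,\ldots,n\}\setminus\sigma\rangle$ does not contain $\del_0$, and removing the $0$-coordinate yields a top-dimensional standard pair $(\del^{\alpha'},\sigma')$ of $\inw(I_A)$ with $\alpha'=(\alpha_1,\dots,\alpha_n)$ and $\sigma'=\sigma\setminus\{0\}$. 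Conversely, every top-dimensional standard pair $(\del^{\alpha'},\sigma')$ of $\inw(I_A)$ lifts uniquely to the top-dimensional standard pair $(\del^{(0,\alpha')},\sigma'\cup\{0\})$ of $\inow(I_{\rho(A)})$.

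Next I would transport this bijection to exponents via Lemma~\ref{lemma:fake-exps}. If $v$ is the unique exponent of $H_A(\beta)$ associated to $(\del^{\alpha'},\sigma')$, then $A\cdot v=\beta$ and $v_i=\alpha'_i$ for $i\notin\sigma'$. Let $\tilde v$ be the unique exponent of $H_{\rho(A)}(\beta_0,\beta)$ associated to $(\del^{(0,\alpha')},\sigma'\cup\{0\})$, so that $\rho(A)\cdot\tilde v=(\beta_0,\beta)$ and $\tilde v_i=\alpha'_i$ for $i\in\{1,\ldots,n\}\setminus\sigma'$. Since the last $d$ rows of $\rho(A)$ are $[0\mid A]$, the equation $\rho(A)\cdot\tilde v=(\beta_0,\beta)$ gives $A\cdot(\tilde v_1,\ldots,\tilde v_n)=\beta$, and combined with the coordinate constraints the uniqueness statement in Remark~\ref{remark:exponent notation} forces $(\tilde v_1,\dots,\tilde v_n)=v$. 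The first row of $\rho(A)$ then gives $\tilde v_0=\beta_0-|v|$, so $\tilde v=\rho_{\beta_0}(v)$, as desired.

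I do not expect any serious obstacle: the content is entirely unpacking definitions once Lemma~\ref{lema1} is available. The one point that requires care is verifying that the correspondence at the level of primary decompositions (Lemma~\ref{lema1}) preserves the standard-pair data, i.e.\ that discarding components containing $\del_0$ and setting $\del_0=1$ corresponds exactly to intersecting each $\sigma$ with $\{1,\ldots,n\}$ (and observing that $\alpha_0=0$ whenever $0\in\sigma$). This is routine but is the sole place where the combinatorics of standard pairs of homogenizations enters the argument.
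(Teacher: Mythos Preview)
Your proposal is correct and follows essentially the same approach as the paper's proof: use Lemma~\ref{lema1} to obtain the bijection $(\del^{\alpha'},\sigma')\leftrightarrow(\del^{(0,\alpha')},\sigma'\cup\{0\})$ between top-dimensional standard pairs of $\inw(I_A)$ and those of $\inow(I_{\rho(A)})$ that pass through zero, then transport it to exponents. The paper compresses all of this into a single sentence, while you have spelled out the details (the role of Lemma~\ref{lemma:fake-exps}, the uniqueness from Remark~\ref{remark:exponent notation}, and the computation $\tilde v_0=\beta_0-|v|$ from the first row of $\rho(A)$), but the underlying argument is identical.
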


\begin{proof}

By Lemma~\ref{lema1}, $v$ is the exponent of $\HA$
with respect to $w$ corresponding to a standard pair
$(\del^{\alpha},\sigma)$, if and only if $\rho_{\beta_0}(v)$ is the
exponent of $H_{\rho(A)}(\beta_0,\beta)$ with respect to
$(0,w)$
corresponding to the standard pair $(\del^{\alpha}, \{0 \} \cup \sigma)$.
\end{proof}

The following result is immediate.

\begin{lemma}
\label{imagenphiv}
Let $\beta\in \CC^d$ generic, $w$ a weight vector for $\HA$ and $\beta_0$ a homogenizing value for
$A$, $\beta$ and $w$. 
Let $v$ be an exponent of $\HA$ with respect to $w$ and consider the
map $\rho$ from
Proposition~\ref{propo:Nilsson-to-Nilsson}. Then
\[
  \rho(\phi_v)=\phi_{(\rho_{\beta_0} (v),v)},
\]
where $\phi_v, \phi_{(\rho_{\beta_0} (v),v)}$ are as in
Proposition~\ref{propo:phiv}. \qed
\end{lemma}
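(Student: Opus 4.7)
The proof is a direct term-by-term comparison. The coefficients $p_u = [v]_{u_-}/[u+v]_{u_+}$ in~\eqref{phisubv} are scalars, so their homogenizations $\widehat{p_u}$ from~\eqref{eq:qu} coincide with $p_u$. Therefore, by Definition~\ref{def:homog-sol},
\[
\rho(\phi_v) \,=\, \sum_{u\in \ker_\ZZ(A)} \frac{[v]_{u_-}}{[u+v]_{u_+}}\, \del_0^{|u|}\!\bigl[x_0^{v_0}\bigr]\, x^{v+u},
\]
where $v_0 = \beta_0 - |v|$. Since $\beta_0$ is a homogenizing value for $A$, $\beta$, and $w$, we have $v_0\notin\ZZ$, so $\del_0^{|u|}$ is well-defined on $x_0^{v_0}$ for every $u\in\ker_\ZZ(A)$, via Definition~\ref{not:delta0-1} and Lemma~\ref{lema:invert-del0} when $|u|<0$.

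A direct computation, splitting by the sign of $|u|$, yields $\del_0^{|u|} x_0^{v_0} = c_u\, x_0^{v_0-|u|}$, where
\[
c_u = \begin{cases}
v_0(v_0-1)\cdots(v_0-|u|+1) & \text{if } |u|>0,\\
1 & \text{if } |u|=0,\\
\bigl[(v_0+1)(v_0+2)\cdots(v_0-|u|)\bigr]^{-1} & \text{if } |u|<0.
\end{cases}
\]
Set $V=\rho_{\beta_0}(v)=(v_0,v)\in\CC^{n+1}$ and, for each $u\in \ker_\ZZ(A)$, set $U=(-|u|,u)\in \ker_\ZZ(\rho(A))$. Because $V_i=v_i$ and $U_i=u_i$ for $i\ge 1$, the contributions of the last $n$ coordinates to $[V]_{U_-}$ and $[U+V]_{U_+}$ reduce exactly to $[v]_{u_-}$ and $[u+v]_{u_+}$, respectively. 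The $0$-th coordinate contributes a factor $c_u$ to $[V]_{U_-}$ when $U_0<0$ (i.e., $|u|>0$) and a factor $c_u^{-1}$ to $[U+V]_{U_+}$ when $U_0>0$ (i.e., $|u|<0$); it contributes nothing when $|u|=0$.

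Combining these identifications, the $u$-th summand of $\rho(\phi_v)$ equals
\[
\frac{[V]_{U_-}}{[U+V]_{U_+}}\, x_0^{V_0+U_0}\, x^{v+u},
\]
which is exactly the $U$-th summand in the expansion~\eqref{phisubv} of $\phi_V$. Since $u\mapsto(-|u|,u)$ is a bijection $\ker_\ZZ(A)\to\ker_\ZZ(\rho(A))$, summing over $u$ recovers the full series $\phi_V=\phi_{\rho_{\beta_0}(v)}$, which is a well-defined formal series because $V$ has no negative integer coordinates (by Convention~\ref{remark:genericity of beta} for $v$ and the homogenizing value hypothesis for $v_0$). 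The only step requiring care is the three-case analysis identifying $c_u$ with the appropriate Pochhammer factor coming from the $0$-th coordinate of $U$; everything else is automatic.
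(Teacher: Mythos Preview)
Your proof is correct and is precisely the direct term-by-term verification one would expect; the paper itself gives no argument at all (the lemma is declared ``immediate'' and ends with a \qed). Your careful three-case identification of $c_u$ with the Pochhammer factor coming from the zeroth coordinate of $U=(-|u|,u)$ is exactly the computation that makes the result transparent, and your remark that $v_0\notin\ZZ$ guarantees the zeroth factor never vanishes (so $\supp(\phi_v)$ and $\supp(\phi_V)$ correspond under $u\mapsto(-|u|,u)$) completes the picture.
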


We now come to the main result in this section.

\begin{theorem}
\label{generadoresnilsson}
Let $\beta$ generic and $w$ a weight vector for $\HA$. 
Then 
\[
  \{\phi_v \mid v \ \textrm{is an exponent of} \ \HA  \
  \textrm{with respect to} \ w \}
\]
is a basis for $\Fw(\HA)$.
\end{theorem}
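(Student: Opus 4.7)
The plan is to transfer the already-known basis result for the regular holonomic homogenized system $H_{\rho(A)}(\beta_0,\beta)$ to $\HA$ via the injective linear map $\rho$ from Theorem~\ref{thm:image of rho}. First, fix a homogenizing value $\beta_0$ for $A$, $\beta$ and $w$; this is possible because the conditions of Definition~\ref{defi:homog-value} exclude only countably many complex numbers. Since $\beta$ is generic in the sense of Convention~\ref{remark:genericity of beta}, the exponents of $\HA$ with respect to $w$ have no negative integer coordinates, so Proposition~\ref{propo:phiv} ensures that each $\phi_v$ is a genuine basic Nilsson solution of $\HA$ in the direction of $w$. Hence $\{\phi_v\}$ is contained in $\Fw(\HA)$.

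Next I would establish linear independence. By Lemma~\ref{imagenphiv}, $\rho(\phi_v)=\phi_{(\rho_{\beta_0}(v),v)}$ for every exponent $v$ of $\HA$. By Lemma~\ref{rhobeta0}, the vectors $(\rho_{\beta_0}(v),v)$ are pairwise distinct exponents of $H_{\rho(A)}(\beta_0,\beta)$ with respect to $(0,w)$ (those corresponding to standard pairs of $\inow(I_{\rho(A)})$ passing through zero). Since $I_{\rho(A)}$ is homogeneous, $H_{\rho(A)}(\beta_0,\beta)$ is regular holonomic, and the homogeneous generic-parameter summary (Subsection~\ref{subsec:homog}) gives that the family $\{\phi_{\beta^{(\del^\alpha,\sigma)}}\}_{(\del^\alpha,\sigma)\in\cT(\inow(I_{\rho(A)}))}$ is a basis of $\mathscr{N}_{(0,w)}(H_{\rho(A)}(\beta_0,\beta))$. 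In particular the subfamily $\{\phi_{(\rho_{\beta_0}(v),v)}\}_v$ is linearly independent, and pulling back along the injective linear map $\rho$ shows that $\{\phi_v\}$ is linearly independent in $\Fw(\HA)$.

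For spanning, Theorem~\ref{thm:image of rho} tells us that the image of $\rho$ is spanned by the basic Nilsson solutions of $H_{\rho(A)}(\beta_0,\beta)$ in the direction of $(0,w)$ associated to standard pairs of $\inow(I_{\rho(A)})$ that pass through zero. Under our genericity assumptions, each such standard pair corresponds to a unique exponent of $H_{\rho(A)}(\beta_0,\beta)$, and the associated basic Nilsson solution is precisely the series $\phi_{(\rho_{\beta_0}(v),v)}$ from Proposition~\ref{propo:phiv}. Combining this with the bijection of Lemma~\ref{rhobeta0} yields
\[
\rho(\Fw(\HA))=\Span_\CC\bigl\{\phi_{(\rho_{\beta_0}(v),v)}\mid v\text{ an exponent of }\HA\text{ wrt }w\bigr\}=\Span_\CC\bigl\{\rho(\phi_v)\bigr\}.
\]
Injectivity of $\rho$ then forces $\Fw(\HA)=\Span_\CC\{\phi_v\}$, which, together with the linear independence established above, finishes the proof.

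The most delicate step is the spanning argument: one must be certain that every basic Nilsson solution of $H_{\rho(A)}(\beta_0,\beta)$ associated to a standard pair through zero is of the form $\phi_{(\rho_{\beta_0}(v),v)}$ and not some more general logarithmic Nilsson series. This is where the full force of Convention~\ref{remark:genericity of beta} — transferred to the parameter vector $(\beta_0,\beta)$ via the choice of homogenizing value — is needed, so that Proposition~\ref{propo:phiv} produces a logarithm-free basis in the homogeneous regular case and there is a single basic Nilsson solution per top-dimensional standard pair.
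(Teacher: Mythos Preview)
Your proposal is correct and follows essentially the same route as the paper: fix a homogenizing value, push forward via the injective map $\rho$ of Theorem~\ref{thm:image of rho}, identify the image using the bijection of Lemma~\ref{rhobeta0} and the regular-holonomic basis for $\mathscr{N}_{(0,w)}(H_{\rho(A)}(\beta_0,\beta))$ from Subsection~\ref{subsec:homog}, then pull back by injectivity. The only minor difference is that the paper obtains linear independence directly from Lemma~\ref{thm:aux-ini-indep} (the $\phi_v$ have distinct initial monomials $x^v$) rather than by transporting independence through $\rho$, and the paper leaves implicit the point you flag at the end, namely that the genericity of $\beta$ together with the choice of homogenizing $\beta_0$ makes $(\beta_0,\beta)$ generic enough for the logarithm-free basis of Subsection~\ref{subsec:homog} to apply to $H_{\rho(A)}(\beta_0,\beta)$.
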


\begin{proof}
Fix a homogenizing value $\beta_0$ for $A$, $\beta$ and $w$, and 
let $\psi\in\Fw(\HA)$. Then, by Theorem~\ref{thm:image of rho},
$\rho(\psi) \in \cN_{(0,w)}(\HrA)$.
Since $I_{\rho(A)}$ is homogeneous, $\HrA$ is regular holonomic, and
we can use 
Lemma~\ref{imagenphiv} and the results from the previous subsection to
write   
\begin{equation}
\label{eqn:linear combination}
\rho(\psi)=\sum c_v\phi_{(\rho_{\beta_0} (v),v)}=\sum c_v\rho(\phi_v)
\end{equation}
where the sum is over the exponents of $\HrA$ with respect to $(0,w)$
corresponding to standard pairs that pass through zero, and the $c_v$ are
complex numbers.
By Lemma~\ref{rhobeta0}, the sum is over the exponents of
$\HA$ with respect to $w$. 
But $\rho$ is injective, so~\eqref{eqn:linear combination} implies
$\psi=\sum c_v\phi_v$.
Thus $\Fw(\HA)$ is contained in the $\CC$-span of the series $\phi_v$
associated to the exponents of $\HA$ with  
respect to $w$. Since the series $\phi_v$ are basic Nilsson solutions
of $\HA$ in the direction of $w$ (Proposition~\ref{propo:phiv}), the reverse
inclusion follows. 
Linear independence is proved using Lemma~\ref{thm:aux-ini-indep}.
\end{proof}

Recall that we have assumed that, if $w$ is a weight vector for
$\HA$, then $(0,w)$ is a weight vector for $\HrA$. In particular, this
implies that the subdivision of $\rho(A)$ induced by $(0,w)$ 
is in fact a triangulation (see Chapter~8 of~\cite{Stu}).

\begin{corollary}
\label{coro:dimnilsson}
Assume $\beta \in\CC^d$ is generic and let $w$ be a weight vector for $\HA$.
The dimension of the space of Nilsson solutions of $\HA$ in the direction of
$w$ is 
\[
\dim_{\CC}(\Fw(\HA)) = 
\sum_{\tiny{\begin{array}{c}
   \sigma  \textrm{ facet of } \Delta_{(0,w)} \\
  \textrm{ such that } 0 \in \sigma \end{array}}} 
\vol(\sigma) .
\]
\end{corollary}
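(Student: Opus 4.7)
The plan is to chain together the preceding results to count the number of basis elements of $\Fw(\HA)$ produced by Theorem~\ref{generadoresnilsson}. By that theorem, the dimension of $\Fw(\HA)$ equals the number of exponents of $\HA$ with respect to $w$. So the task reduces to a combinatorial counting of exponents.

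First, I would fix a homogenizing value $\beta_0$ for $A$, $\beta$, and $w$, and invoke Lemma~\ref{rhobeta0}: the map $v \mapsto \rho_{\beta_0}(v)$ gives a bijection between the exponents of $\HA$ with respect to $w$ and the exponents of $\HrA$ with respect to $(0,w)$ that correspond to standard pairs of $\inow(I_{\rho(A)})$ passing through zero. Since $\beta$ is generic (and $\beta_0$ is chosen generically as well, so the pair $(\beta_0,\beta)$ meets the genericity requirements of Convention~\ref{remark:genericity of beta} applied to $\rho(A)$), every exponent of $\HrA$ with respect to $(0,w)$ arises from a unique top-dimensional standard pair of $\inow(I_{\rho(A)})$, via Lemma~\ref{lemma:fake-exps} together with Lemma~\ref{l:genini}. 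Hence counting exponents reduces to counting top-dimensional standard pairs $(\del^\alpha,\sigma)$ of $\inow(I_{\rho(A)})$ with $0 \in \sigma$.

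Next, I would translate this count into volumes of simplices in the triangulation $\Delta_{(0,w)}$ of $\rho(A)$, using the material recalled in Subsection~\ref{subsec:homog}. The ideal $I_{\rho(A)}$ is homogeneous, and our assumption (recorded just before the corollary) that $(0,w)$ is a weight vector for $\HrA$ guarantees that $\Delta_{(0,w)}$ is a coherent triangulation. By Theorem~8.3 of~\cite{Stu}, the facets of $\Delta_{(0,w)}$ are exactly the sets $\sigma$ appearing in top-dimensional standard pairs $(\del^\alpha,\sigma)$ of $\inow(I_{\rho(A)})$, and for a fixed facet $\sigma$ the number of standard pairs with that second coordinate is $\vol(\sigma)$ (the normalized volume of the simplex spanned by the columns of $\rho(A)$ indexed by $\sigma$). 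The condition that the standard pair passes through zero, i.e. $0 \in \sigma$, translates directly into the condition that the facet $\sigma$ of $\Delta_{(0,w)}$ contains the index $0$ (the index of the adjoined column of ones/zeros in $\rho(A)$).

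Putting these two steps together gives
\[
\dim_\CC(\Fw(\HA)) = \#\{\text{exponents of }\HA\text{ w.r.t.\ }w\} = \sum_{\substack{\sigma \text{ facet of } \Delta_{(0,w)} \\ 0 \in \sigma}} \vol(\sigma),
\]
which is the claimed formula. The only mildly subtle point, and the one I would be careful about, is ensuring that the genericity assumptions on $\beta$ (Convention~\ref{remark:genericity of beta}) transfer to genericity of $(\beta_0,\beta)$ for the matrix $\rho(A)$; this is handled by choosing $\beta_0$ from a co-meager set, compatibly with the homogenizing-value requirement of Definition~\ref{defi:homog-value}. Everything else is bookkeeping on standard pairs.
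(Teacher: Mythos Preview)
Your proposal is correct and follows essentially the same route as the paper: count exponents via Theorem~\ref{generadoresnilsson}, pass to top-dimensional standard pairs of $\inow(I_{\rho(A)})$ through zero via Lemma~\ref{rhobeta0}, and convert the count to volumes of facets of $\Delta_{(0,w)}$. The only minor wrinkle is your citation: Theorem~8.3 of~\cite{Stu} identifies the facets of $\Delta_{(0,w)}$ with the minimal primes of $\inow(I_{\rho(A)})$, but the statement that the number of standard pairs over a fixed facet $\sigma$ equals $\vol(\sigma)$ is Lemma~3.3 of~\cite{STV} together with Theorem~8.8 of~\cite{Stu}, exactly as the paper invokes them.
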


\begin{proof}
By Theorem~\ref{generadoresnilsson}, the number $\dim_{\CC}(\Fw(\HA))$ is the
number of exponents of $\HA$ with respect to $w$, which is the number
of top-dimensional standard pairs of $\inw(I_A)$, because $\beta$ is
generic. Using the bijection 
from Lemma~\ref{rhobeta0}, we conclude that $\dim_{\CC}(\Fw(\HA))$ is
the number of top-dimensional standard pairs of $\inow(I_{\rho(A)})$
that pass through zero. Given $\sigma \subset \{0,1,\dots,n\}$ of
cardinality $d+1$ such that $0\in \sigma$, the number of
top-dimensional standard pairs of 
$\inow(I_{\rho(A)})$ of the form $(\del^{\alpha}, \sigma)$
is the multiplicity of $\<\del_i \mid i \notin \sigma \>$ as an
associated prime of $\inow(I_{\rho(A)})$ by Lemma~3.3 in~\cite{STV}. 
This number equals the normalized volume of the simplex
$\{0\} \cup \sigma$ by Theorem~8.8 in~\cite{Stu}, and the result follows.
\end{proof}

\begin{proposition}
\label{l:distr}
Suppose that $\beta$ is generic, and let $w$ be a weight vector for $\HA$. Then
\[
\rank(\inww(\HA) = \deg(\inw(I_A)).
\]
\end{proposition}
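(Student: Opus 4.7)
The plan is to sandwich $\rank(\inww(\HA))$ between $\deg(\inw(I_A))$ from below and above, establishing each inequality separately.

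For the lower bound, I would combine Theorem~\ref{generadoresnilsson} (which produces a basis of $\Fw(\HA)$ indexed by exponents, in bijection with $\cT(\inw(I_A))$ under our genericity on $\beta$) with the monomial-ideal identity $\#\cT(\inw(I_A)) = \deg(\inw(I_A))$ (Lemma~3.3 in~\cite{STV}) to obtain $\dim_\CC \Fw(\HA) = \deg(\inw(I_A))$. Applying Proposition~\ref{dimformmenorrankini} yields $\deg(\inw(I_A)) \leq \rank(\inww(\HA))$.

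For the upper bound, I would invoke the second display in Lemma~\ref{l:genini} to rewrite $\inww(\HA) = M + \langle E - \beta\rangle$ with $M = \tp(\inw(I_A))$. Decomposing
\[
M \; = \;  \bigcap_{(\del^\alpha, \sigma) \in \cT(\inw(I_A))} Q_{(\del^\alpha, \sigma)}, \qquad Q_{(\del^\alpha,\sigma)} = \langle \del_i^{\alpha_i+1} \mid i \notin \sigma\rangle ,
\]
where each box ideal is supported on a minimal prime of codimension $n-d$ (so $|\sigma| = d$), I would run a Mayer--Vietoris argument on the left $D$-modules $D/(Q_{(\del^\alpha, \sigma)} + \langle E - \beta\rangle)$ to obtain
\[
\rank\bigl(D/(M + \langle E - \beta\rangle)\bigr) \; = \; \sum_{(\del^\alpha,\sigma) \in \cT(\inw(I_A))} \rank\bigl(D/(Q_{(\del^\alpha,\sigma)} + \langle E - \beta\rangle)\bigr).
\]
For each summand, the solutions are monomials $x^v$ with $v_i \in \{0, 1, \ldots, \alpha_i\}$ for $i \notin \sigma$ and $v_\sigma$ uniquely determined by $A \cdot v = \beta$ (since the submatrix $A_\sigma$ is invertible, as $\sigma$ indexes a minimal prime of $\inw(I_A)$ containing $I_A$). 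This produces exactly $\prod_{i \notin \sigma}(\alpha_i+1) = \deg(Q_{(\del^\alpha,\sigma)})$ independent solutions per summand, and summing gives $\rank(\inww(\HA)) \leq \deg(M) = \deg(\inw(I_A))$.

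The main obstacle is justifying rank-additivity under the primary decomposition: one must verify that $D/(Q_{(\del^\alpha,\sigma)} + Q_{(\del^{\alpha'},\sigma')} + \langle E - \beta\rangle)$ has rank zero for distinct standard pairs, which follows because the radical $\sqrt{Q_{(\del^\alpha,\sigma)} + Q_{(\del^{\alpha'},\sigma')}}$ has codimension strictly greater than $n-d$, so generically $A \cdot v = \beta$ has no solution supported on the corresponding smaller coordinate subspace. An alternative route that sidesteps the additivity issue would transport the problem via the injection $\rho$ of Theorem~\ref{thm:image of rho} to the regular holonomic system $\HrA$, whose initial ideal has rank $\deg(I_{\rho(A)})$ by the results of Subsection~\ref{subsec:homog}, and then apply Lemma~\ref{lema1} to identify the contribution of standard pairs of $\inow(I_{\rho(A)})$ passing through zero as exactly $\deg(\inw(I_A))$.
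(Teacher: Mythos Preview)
Your lower bound is fine and not circular: Theorem~\ref{generadoresnilsson} together with $\#\cT(\inw(I_A)) = \deg(\inw(I_A))$ and Proposition~\ref{dimformmenorrankini} does give $\deg(\inw(I_A)) \leq \rank(\inww(\HA))$.

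The upper bound via Mayer--Vietoris, however, has a genuine gap. Your claim that $\sqrt{Q_{(\del^\alpha,\sigma)} + Q_{(\del^{\alpha'},\sigma')}}$ has codimension strictly greater than $n-d$ is false whenever $\sigma = \sigma'$ with $\alpha \neq \alpha'$: in that case both box ideals, and their sum, have the same radical $\langle \del_i \mid i \notin \sigma\rangle$ of codimension exactly $n-d$, so the overlap has positive rank. Concretely, take $A = [1\;\; 2]$ with $w$ chosen so that $\inw(I_A) = \langle \del_1^2\rangle$; the two top standard pairs $(1,\{2\})$ and $(\del_1,\{2\})$ give box ideals of degrees $1$ and $2$, whose sum is $3$, while $\deg(\inw(I_A)) = 2$. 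Your Mayer--Vietoris inequality thus yields only $\rank(\inww(\HA)) \leq 3$, which combined with the lower bound $2$ does not close the sandwich. More generally, $\sum_{(\del^\alpha,\sigma)\in\cT}\prod_{i\notin\sigma}(\alpha_i+1)$ overshoots $\#\cT(\inw(I_A))$ whenever any $\alpha_i > 0$. Your alternative route through $\rho$ and $\HrA$ does not help here either, because $\rho$ controls $\dim_\CC \Fw(\HA)$, not $\rank(\inww(\HA))$ directly.

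The paper avoids this by not decomposing at all. It rewrites $\inww(\HA) = \tp(\inw(I_A)) + \langle E-\beta\rangle$ in terms of the commuting operators $\theta_i = x_i\del_i$, using $x^u\del^u = \prod_i\prod_{j=0}^{u_i-1}(\theta_i - j)$, and invokes Proposition~2.3.6 of~\cite{SST} to identify the rank with the $\CC$-dimension of a quotient of $\CC[\theta]$. The zero set of the resulting polynomial system is a union of $\deg(\inw(I_A))$ translated coordinate $d$-planes, each meeting $\{E = \beta\}$ in exactly one point (Corollary~3.2.9 of~\cite{SST}), so the dimension count is exact in one stroke. This is shorter and sidesteps the inclusion--exclusion entirely.
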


\begin{proof}
By Lemma~\ref{l:genini},  
$\inww(H_A(\beta)) = \tp(\inw(I_A)) + \langle E - \beta
\rangle$. 
Since $\tp(\inw(I_A))$ is a monomial ideal, the $D$-ideals $\inww(\HA)$ and 
$\< x^u \del^u \mid \del^u \in \tp(\inw(I_A)) \> + \< E-\beta \>$ have
the same holomorphic solutions.

We denote $x_i \del_i = \theta_i$, and observe that
$\CC[\theta_1,\dots,\theta_n]$ is a commutative polynomial subring of
$D$. Also recall that the Euler operators $E_i-\beta_i$ belong to
$\CC[\theta]$. Since $x^u\del^u =
\prod_{i=1}^n\prod_{j=0}^{u_i-1}(\theta_i-j)$, Proposition~2.3.6 in~\cite{SST} 
can be applied to conclude that the holonomic rank $\rank(\inww(\HA))$ equals
\begin{equation}
\label{eqn:dimension of frobenius}
 \dim_{\CC} \bigg(\frac{\CC[\theta]}{ \<
\prod_{i=1}^n\prod_{j=0}^{u_i-1}(\theta_i-j)\mid \del^u
\in\tp(\inw(I_A)) \> + \<E-\beta\>} 
\bigg).
\end{equation}
Considered as a system of polynomial equations in $n$ variables $(\theta_1, \dots, \theta_n)$, 
the zero set of the ideal
$\< \prod_{i=1}^n\prod_{j=0}^{u_i-1}(\theta_i-j)\mid \del^u
\in\tp(\inw(I_A)) \>$ is a  subvariety of $\CC^n$ consisting of
$\deg(\inw(I_A))$ irreducible 
components, each of which is a translate of a $d$-dimensional
coordinate subspace of 
$\CC^n$. By Corollary~3.2.9 in~\cite{SST}, each of these
components meets the zero set of $\<E-\beta\>$ in exactly one point.
Therefore,~\eqref{eqn:dimension of frobenius} equals
$\deg(\inw(I_A))$, and the proof is complete. 
\end{proof}

\begin{corollary}
\label{propo:degree computation}
Let $\beta$ generic and $w$ a weight vector for $\HA$.
Then
\begin{equation}\label{volumen}
\rank(\inww(\HA)) =
\sum_{\tiny{\begin{array}{c} \sigma \textrm{ facet of } \Delta_{(0,w)}\\ 
\textrm{such that } 0 \in \sigma\end{array}}} \vol(\sigma),
\end{equation}
\end{corollary}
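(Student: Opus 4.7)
The plan is to combine the three results immediately preceding the statement. Proposition~\ref{l:distr} identifies $\rank(\inww(\HA))$ with the degree of the monomial ideal $\inw(I_A)$. The degree of a monomial ideal is the sum of the multiplicities of its top-dimensional associated primes, and by Lemma~3.3 in~\cite{STV} this multiplicity at a minimal prime $\langle \del_i \mid i \notin \sigma\rangle$ equals the number of top-dimensional standard pairs of the form $(\del^{\alpha},\sigma)$. Hence
\[
\deg(\inw(I_A)) \;=\; \#\,\mathcal{T}(\inw(I_A)).
\]

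Next, I would apply Lemma~\ref{lema1} to the pair $(I_A, \rho(I_A))$: the minimal primes of $\inw(I_A)$ correspond bijectively to the minimal primes of $\inow(I_{\rho(A)})$ that do not contain $\del_0$, and the multiplicities are preserved under the specialization $\del_0 \mapsto 1$. On the standard-pair level, this is precisely the bijection already exploited in the proof of Corollary~\ref{coro:dimnilsson}: $(\del^{\alpha},\sigma) \in \mathcal{T}(\inw(I_A))$ corresponds to $(\del^{\alpha}, \{0\}\cup\sigma) \in \mathcal{T}(\inow(I_{\rho(A)}))$, i.e.\ to a top-dimensional standard pair of $\inow(I_{\rho(A)})$ that passes through zero.

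Finally, for each subset $\sigma \subset \{1,\dots,n\}$ of size $d$ such that $\{0\}\cup\sigma$ is a facet of the coherent triangulation $\Delta_{(0,w)}$, Theorem~8.8 in~\cite{Stu} identifies the number of top-dimensional standard pairs of $\inow(I_{\rho(A)})$ with second coordinate $\{0\}\cup\sigma$ with the normalized volume of the simplex $\{0\}\cup\sigma$. Summing over such facets yields
\[
\rank(\inww(\HA)) \;=\; \deg(\inw(I_A)) \;=\; \sum_{\substack{\sigma \text{ facet of } \Delta_{(0,w)} \\ 0\in\sigma}} \vol(\sigma),
\]
which is the desired formula. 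The only nontrivial step is verifying that Lemma~\ref{lema1} preserves multiplicities (not merely the set of primes), but this is essentially built into the standard-pair bijection: the specialization $\del_0 \mapsto 1$ on a $\del_0$-free primary component is an isomorphism of the corresponding artinian quotient, so lengths (hence multiplicities) match. Alternatively, one may cite Corollary~\ref{coro:dimnilsson} together with Proposition~\ref{dimformmenorrankini}, which forces equality in $\dim_\CC\Fw(\HA)\leq\rank(\inww(\HA))$ and thereby delivers the same conclusion.
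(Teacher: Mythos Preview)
Your main argument is correct and essentially identical to the paper's proof: both invoke Proposition~\ref{l:distr} to reduce to computing $\deg(\inw(I_A))$, count this via top-dimensional standard pairs (Lemma~3.3 in~\cite{STV}), pass to $\inow(I_{\rho(A)})$ via the bijection with standard pairs passing through zero (the paper cites Lemma~\ref{rhobeta0}, you cite Lemma~\ref{lema1} directly---same content), and finish with the volume interpretation already worked out in the proof of Corollary~\ref{coro:dimnilsson}. Your remark on why multiplicities survive the specialization $\del_0\mapsto 1$ is a reasonable elaboration.

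The alternative you sketch at the end, however, does not work. Corollary~\ref{coro:dimnilsson} gives $\dim_\CC\Fw(\HA)=\sum_{\sigma}\vol(\sigma)$ and Proposition~\ref{dimformmenorrankini} gives $\dim_\CC\Fw(\HA)\leq\rank(\inww(\HA))$; together these yield only the inequality $\sum_{\sigma}\vol(\sigma)\leq\rank(\inww(\HA))$, not equality. In the paper the equality $\dim_\CC\Fw(\HA)=\rank(\inww(\HA))$ (Corollary~\ref{coro:dimequalrankini}) is deduced \emph{from} the present corollary, so invoking it here would be circular.
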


\begin{proof}

We need to show that the sum on the right hand side of~\eqref{volumen}
equals $\deg(\inw(I_A))$. By Lemma~3.3 in~\cite{STV}, this degree
equals the number of 
top-dimensional standard pairs of $\inw(I_A)$, 
which equals the number of top-di\-men\-sion\-al standard pairs of
$\inow(I_{\rho(A)})$ passing through zero by Lemma~\ref{rhobeta0}. As
in the proof of Corollary~\ref{coro:dimnilsson}, the number of such
standard pairs is the desired sum.
\end{proof}

\begin{corollary}
\label{coro:dimequalrankini}
Suppose that $\beta$ is generic and $w$ is a weight vector for $\HA$. Then
\[
 \dim_{\CC}(\Fw(\HA))= \rank(\inww(\HA)).
\]
\end{corollary}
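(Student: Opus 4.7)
The plan is straightforward: this corollary is essentially a matter of comparing two formulas that have already been established in the section. By Corollary~\ref{coro:dimnilsson}, the dimension $\dim_\CC(\Fw(\HA))$ is given by the sum of the normalized volumes $\vol(\sigma)$ over the facets $\sigma$ of the coherent triangulation $\Delta_{(0,w)}$ of $\rho(A)$ that contain the vertex $0$. By Corollary~\ref{propo:degree computation}, the holonomic rank $\rank(\inww(\HA))$ is given by exactly the same sum. So the proof amounts to invoking these two results and observing that their right-hand sides agree.

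If one wanted a more direct path, one could instead combine Theorem~\ref{generadoresnilsson} with Proposition~\ref{l:distr}: the theorem shows that $\dim_\CC(\Fw(\HA))$ equals the number of exponents of $\HA$ with respect to $w$, which under the genericity assumptions on $\beta$ from Convention~\ref{remark:genericity of beta} equals the number of top-dimensional standard pairs of $\inw(I_A)$; by Lemma~3.3 in~\cite{STV}, that number is $\deg(\inw(I_A))$, and Proposition~\ref{l:distr} identifies this with $\rank(\inww(\HA))$. Either route is essentially a bookkeeping step.

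The only place where one might expect a subtle point would be to make sure the bijection between exponents and top-dimensional standard pairs is one-to-one, i.e., that distinct standard pairs yield distinct exponents; but this is already built into the genericity conventions (no two fake exponents differ by an integer vector), so no further work is needed. In short, I would simply write: ``Combine Corollary~\ref{coro:dimnilsson} and Corollary~\ref{propo:degree computation}.'' The main obstacle, namely identifying both sides with the combinatorial volume sum over facets of $\Delta_{(0,w)}$ through the vertex $0$, has been resolved in the preceding two results.
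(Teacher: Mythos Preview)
Your proposal is correct and matches the paper's own proof exactly: the paper simply writes ``Immediate from Corollary~\ref{coro:dimnilsson} and Corollary~\ref{propo:degree computation}.'' Your alternative route via Theorem~\ref{generadoresnilsson} and Proposition~\ref{l:distr} is also valid, but the paper takes the first path you describe.
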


\begin{proof}
Immediate from Corollary~\ref{coro:dimnilsson} and 
Corollary~\ref{propo:degree computation}. 
\end{proof}

The following corollary states that, for certain weight vectors, the
dimension of the space $\Fw(\HA)$ equals 
$\rank(\HA)$. However, this fails in general, as
Example~\ref{ejem:dimnilmenorrank} shows. 
This means that, as expected, formal Nilsson series are not enough to 
understand the solutions of irregular hyper\-geo\-metric systems.

When working with matrices $A$ whose
columns are not assumed to lie in a hyperplane off the origin,
$\vol(A)$ denotes the normalized volume of the convex hull of
$\{0,a_1,\dots,a_n\}$ with respect to the lattice $\ZZ A = \ZZ^d$.

\begin{corollary}
\label{coro:notalways}
Suppose that $\beta$ is generic, and $w$ is a weight vector for $\HA$.
The equality
\[
\dim_{\CC}(\Fw(\HA))=\vol(A) =\rank(\HA)
\]
holds if and only if \ $0$ belongs to every maximal simplex in the
triangulation $\Delta_{(0,w)}$ of $\rho(A)$. 
\end{corollary}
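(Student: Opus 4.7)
The plan is to combine Corollary~\ref{coro:dimnilsson} with two standard facts: Adolphson's rank formula, which gives $\rank(\HA)=\vol(A)$ whenever $\beta$ is generic (valid without homogeneity assumptions on $I_A$), and the additivity of normalized volume across the maximal simplices of a triangulation.

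First I would verify that the sum of the normalized volumes of \emph{all} maximal simplices of $\Delta_{(0,w)}$ equals $\vol(A)$. Under our standing assumption (Remark~\ref{rmk:0w}), $(0,w)$ is a weight vector for $\HrA$, so $\Delta_{(0,w)}$ is a bona fide (coherent) triangulation of the point configuration $\rho(A)$, whose columns lie on the hyperplane $\{t_0=1\}\subset\RR^{d+1}$. Measuring normalized volume on that hyperplane with respect to the lattice $\ZZ^{d+1}\cap\{t_0=1\}$ agrees with measuring normalized volume of $\conv\{0,a_1,\dots,a_n\}\subset\RR^d$ with respect to $\ZZ^d=\ZZ A$, which is exactly the definition of $\vol(A)$ given just before the corollary. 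Since the maximal simplices of a triangulation partition the triangulated polytope up to lower-dimensional overlaps, we obtain $\sum_{\sigma}\vol(\sigma)=\vol(A)$, where the sum ranges over all facets $\sigma$ of $\Delta_{(0,w)}$.

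Combining this with Corollary~\ref{coro:dimnilsson} and Adolphson's formula, I would then write the chain
\[
\dim_{\CC}(\Fw(\HA))
\;=\!\!\sum_{\substack{\sigma\text{ facet of }\Delta_{(0,w)}\\ 0\in\sigma}}\!\!\vol(\sigma)
\;\leq\;\sum_{\sigma\text{ facet of }\Delta_{(0,w)}}\vol(\sigma)
\;=\;\vol(A)\;=\;\rank(\HA).
\]
Since each $\vol(\sigma)>0$, equality between the two partial sums holds if and only if every facet $\sigma$ of $\Delta_{(0,w)}$ contains the index $0$, which is precisely the stated criterion.

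There is no real obstacle: the corollary is a direct consequence of Corollary~\ref{coro:dimnilsson} once the normalizations for $\vol(A)$ and for the facet volumes of $\Delta_{(0,w)}$ are matched, which is the only bookkeeping point requiring mild care.
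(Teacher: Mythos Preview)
Your proof is correct and follows essentially the same approach as the paper: both combine Corollary~\ref{coro:dimnilsson}, Adolphson's generic rank formula $\rank(\HA)=\vol(A)$, and the additivity of normalized volume over the maximal simplices of $\Delta_{(0,w)}$ (the paper phrases this as $\vol(A)=\vol(\rho(A))$), then conclude via positivity of the facet volumes. You simply spell out the normalization-matching step in slightly more detail than the paper does.
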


\begin{proof}
Note that $\vol(A)=\vol(\rho(A))$, which is the sum of
the volumes of all the maximal simplices in $\Delta_{(0,w)}$.
Therefore $\dim_{\CC}(\Fw(\HA))=\vol(A)$ if and only if all maximal
simplices in $\Delta_{(0,w)}$ pass through zero. Now use a result of
Adolphson~\cite{Ado} that, for generic $\beta$, $\rank(\HA)=\vol(A)$.
\end{proof}

\begin{example}
\label{ejem:dimnilmenorrank}
Let
\[
A=\left[\begin{array}{ccc}
       1 & 0 & 1\\
       0 & 1 & 1  
      \end{array}\right] \ ,\;  \textrm{ so that } \;
    I_A=\langle\partial_3-\partial_1\partial_2\rangle \; .
\]
Then $\vol(\rho(A)) = \vol(A) = 2$ is the generic rank of 
$\HA$ and of $H_{\rho(A)}(\beta_0,\beta)$.

If $w$ is a perturbation of $(1,1,1)$, we have
$\inw(I_A)=\< \partial_1\partial_2 \>$ and the corresponding
triangulation is
$\Delta_{(0,w)}=\{\{0,1,3\},\{0,2,3\}\}$. In this case 
\[
\dim_{\CC}(\Fw(\HA))=\vol(\{0,1,3\})+\vol(\{0,2,3\}) = 2 =\rank(\HA).
\]
On the other hand, if $w$ is a perturbation of $(1,1,3)$, then we have 
$\inw(I_A)=\< \partial_3 \>$ and the corresponding triangulation is
$\Delta_{(0,w)}=\{\{0,1,2\},\{1,2,3\}\}$. In this case 
\begin{equation}
\dim_{\CC}(\Fw(\HA))=\vol(\{0,1,2\})= 1 <\rank(\HA).
\end{equation}
\end{example}

\section{Logarithm-free Nilsson series}
\label{sec:logarithm free}

If we assume that $\beta$ is generic, then all of the Nilsson
solutions of $\HA$ are automatically logarithm-free. We now turn our
attention to the logarithm-free Nilsson solutions of $\HA$ without any
assumptions on the parameter $\beta$.

\begin{definition}
For a vector $v\in\CC^n$, its \textbf{negative support} is
the set of indices  
\[
\nsup(v) = \big\{ i \in \{1,\dots, n\} \mid v_i\in\ZZ_{<0} \big\}.
\]
A vector $v\in \CC^n$ has \textbf{minimal negative support}
if $\nsup(v)$ does not properly contain $\nsup(v+u)$ for any nonzero $u \in
\ker_{\ZZ}(A)$. 
We denote 
\begin{equation}
\label{eqn:Nv}
N_v = \{ u \in \ker_{\ZZ}(A) \mid \nsup(u+v) = \nsup(v) \}.
\end{equation}
\end{definition}

When $\beta$ is arbitrary, the fake exponents of
$\HA$ with respect to a weight vector $w$ can have negative integer 
coordinates. For such a $v$, we wish to construct an associated basic
Nilsson solution of $\HA$, in the same way as we did in
Proposition~\ref{propo:phiv}.

\begin{proposition}
\label{propo:phisubvlibrelog}
Let $w$ be a weight vector for $\HA$ and let $v\in\CC^n$ be a fake
exponent of $\HA$ with respect to $w$.
The series
\begin{equation}
\label{phisubvlibrelog}
\phi_v=\sum_{u\in N_v}\frac{[v]_{u_-}}{[u+v]_{u_+}}x^{u+v} \ ,
\end{equation}
where $[v]_{u_-}$ and $[u+v]_{u_+}$ are as in (\ref{phisubv}), is well
defined. This series
is a formal solution of $\HA$ if and only if $v$ has minimal negative support,
and in that case, $\phi_v$ is a basic Nilsson solution of $\HA$ in the
direction of $w$. Consequently, $v$ is an exponent of $\HA$ with respect
to $w$.
\end{proposition}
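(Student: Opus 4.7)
The plan is to adapt the arguments of Saito--Sturmfels--Takayama (Theorem~3.4.14 in~\cite{SST}) to the possibly inhomogeneous setting and then verify the Nilsson conditions using that $v$ is a fake exponent. I would organize the proof in three stages: well-definedness, characterization of the solution property, and Nilsson structure.

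First I would establish that the coefficients $[v]_{u_-}/[u+v]_{u_+}$ are finite and nonzero for every $u \in N_v$. The factor $[v]_{u_-}$ is a finite product, so only $[u+v]_{u_+}$ is at issue. Fix $i$ with $u_i>0$. If $i\notin\nsup(v)$ then $v_i\notin\ZZ_{<0}$, so $v_i+j\neq 0$ for $j\ge 1$. If $i\in\nsup(v)$ then, since $\nsup(u+v)=\nsup(v)$, we have $(u+v)_i\in\ZZ_{<0}$, hence $u_i<-v_i$; therefore $v_i+j<0$ for every $j\in\{1,\dots,u_i\}$. This proves well-definedness.

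Next I would show that $\phi_v$ is a formal solution of $\HA$ if and only if $v$ has minimal negative support. The Euler operators are automatic: every $u\in N_v$ lies in $\ker_\ZZ(A)$, so $A(v+u)=Av=\beta$. The content is therefore the action of $I_A$. For $\mu\in\ker_\ZZ(A)$, using $\partial^\nu x^s = \prod_i [s_i]_{\nu_i}\, x^{s-\nu}$, I would compute $\partial^{\mu_+}$ and $\partial^{\mu_-}$ term by term on $\phi_v$, after which the binomial $\partial^{\mu_+}-\partial^{\mu_-}$ rewrites as a single sum indexed by $u \in N_v$. A direct calculation with the factorials $[v]_{u_-}$ and $[u+v]_{u_+}$ (this is the key identity used in Lemma~3.4.6 of~\cite{SST}) shows that consecutive coefficients satisfy a recursion that forces the total sum to vanish, \emph{provided} that every shift $u-\mu$ that occurs stays within $N_v$ or produces a vanishing numerator. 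Minimal negative support of $v$ is precisely the combinatorial condition that guarantees this. Conversely, if $v$ fails to have minimal negative support, one exhibits an explicit $u\in N_v$ and a monomial $\partial^{\mu_+}-\partial^{\mu_-}\in I_A$ whose action on $\phi_v$ produces a single surviving term, contradicting annihilation. The hard part, and the step most likely to require care, is organizing the bookkeeping of the four possible sign combinations of $u_i$ and $\mu_i$ so that the cancellation really is term-by-term; this is where the minimal negative support hypothesis is used in each of the cases.

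Finally I would verify that $\phi_v$ is a basic Nilsson solution of $\HA$ in the direction of $w$, using Remark~\ref{rem:simplebasic}. The polynomials $p_u$ are constants, so condition~\ref{defformal2} is trivial; and $p_0=1\neq 0$ by the convention $[v]_0=[v]_0=1$, so condition~\ref{defformal3} holds. For condition~\ref{defformal1}, since $v$ is a fake exponent with respect to $w$, the monomial $x^v$ is annihilated by $\inw(I_A)$; arguing as in the proof of Theorem~3.4.2 in~\cite{SST}, this forces every nonzero $u \in N_v$ appearing in $\phi_v$ to satisfy $w\cdot u > 0$, and in fact $w'\cdot u>0$ for all $w'$ in a small open neighborhood of $w$, since being a fake exponent is preserved on an open cone around $w$ by Definition~\ref{convention:weight vector}. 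This places $\supp(\phi_v)$ inside $\cC^*\cap\ker_\ZZ(A)$, and hence $\phi_v$ is basic. That $v$ is then an exponent, not merely a fake exponent, follows from Lemma~\ref{rem:initial}.
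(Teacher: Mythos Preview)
Your proposal is correct and follows essentially the same route as the paper: well-definedness from the definition of $N_v$, the equivalence with minimal negative support by the computation in~\cite{SST} (the paper cites Proposition~3.4.13 rather than Theorem~3.4.14 for this step), and the basic Nilsson property by repeating the argument of Proposition~\ref{propo:phiv}. The only difference is that you spell out the details where the paper simply defers to the corresponding results in~\cite{SST}.
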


\begin{proof}
The series is well defined because, as the summation is over $N_v$, there
cannot be any zeros in the denominators of the summands. The second assertion
holds with the same proof as Proposition 3.4.13 of~\cite{SST}.
To see that $\phi_v$ is a basic Nilsson solution of $\HA$, we can argue in the
same way as in the proof of
Proposition~\ref{propo:phiv}.
\end{proof}

Lemma~3.4.12 of~\cite{SST} shows that if the negative support of $v$ is empty, 
then the series \eqref{phisubvlibrelog} and \eqref{supp} coincide.

We now consider Nilsson series in the direction of a weight vector.

\begin{definition}
\label{def:directionw}
Formal solutions of $\HA$ of the form~\eqref{eqn:Nilsson series} that satisfy
the first two conditions in Definition~\ref{defformal} are called \textbf{(formal)
series solutions of $\HA$ in the direction of $w$}. 
\end{definition}

The $\CC$-vector space of
\emph{logarithm-free} formal $A$-hyper\-geo\-metric series with
parameter $\beta$ in the direction
of $w$ is denoted by $\cS_w(\HA)$.  

\begin{theorem}
\label{prop:bSaito}
Let $w$ be a weight vector for $\HA$. The set
\begin{equation}
\label{eqn:basis log free}
\{ \phi_v \mid v \textrm{ is an exponent of } \HA \textrm{ with
  minimal negative support}\}
\end{equation}
is a basis for $\cS_w(\HA)$.
\end{theorem}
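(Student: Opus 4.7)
The plan is to prove both linear independence and spanning of the collection $\{\phi_v\}$. For linear independence, Proposition~\ref{propo:phisubvlibrelog} guarantees that each $\phi_v$ (indexed by an exponent $v$ with minimal negative support) is a logarithm-free basic Nilsson solution of $\HA$ with $\inw(\phi_v) = x^v$. Distinct exponents yield distinct monomials, which are $\CC$-linearly independent, so Lemma~\ref{thm:aux-ini-indep}(\ref{t-item2}) delivers the $\CC$-linear independence of the $\phi_v$.

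For spanning, I would take a nonzero $\psi \in \cS_w(\HA)$ and examine its initial series $\inw(\psi) = \sum_{i=1}^k c_i x^{a_i}$, a finite sum of distinct monomials of minimal weight $\mu(\psi)$. Since $\inw(\psi)$ is annihilated by $\inww(\HA)$ (Theorem~2.5.5 of~\cite{SST}), and $\inww(\HA)$ is $A$-homogeneous while the summands have pairwise distinct $A$-degrees, each $c_i x^{a_i}$ is individually annihilated, so every $a_i$ is an exponent of $\HA$ with respect to $w$.

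The main obstacle is to show each such $a_i$ has minimal negative support, mirroring the strategy behind Proposition~3.4.13 and Theorem~3.4.14 in~\cite{SST}. I would argue by contradiction: if $a_i$ did not have minimal negative support, there would exist a nonzero $u \in \ker_{\ZZ}(A)$ with $\nsup(a_i + u) \subsetneq \nsup(a_i)$. For a judicious choice of such $u$, the toric relation $\partial^{u_+} - \partial^{u_-} \in I_A \subset \HA$ applied to $\psi$ produces an asymmetry between the falling factorials $[a_i]_{u_-}$ and $[a_i + u]_{u_+}$ of~\eqref{eqn:phisubv-coeffs}: indeed, any $j \in \nsup(a_i) \setminus \nsup(a_i + u)$ satisfies $j \in \supp(u_+)$ and $a_{i,j} \in \{-u_j,\dots,-1\}$, so $[a_i+u]_{u_+}$ vanishes while $[a_i]_{u_-}$ need not. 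Tracking coefficients of $x^{a_i - u_-}$ in the identity $(\partial^{u_+} - \partial^{u_-})\psi = 0$ then propagates this imbalance and forces a monomial of $\psi$ at weight strictly below $\mu(\psi)$ to have nonzero coefficient, a contradiction.

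Once each $a_i$ is known to have minimal negative support, $\phi_{a_i} \in \cS_w(\HA)$ is a well-defined basic Nilsson solution with $\inw(\phi_{a_i}) = x^{a_i}$, so $\psi - \sum_{i=1}^k c_i \phi_{a_i} \in \cS_w(\HA)$ either vanishes or has strictly larger initial weight. Since the set of values $u \cdot w$ over the finitely generated monoid $\cC^* \cap \ker_{\ZZ}(A)$ is well-ordered and discrete in $\RR_{\geq 0}$, iterating this reduction weight by weight assembles $\psi$ formally as a $\CC$-linear combination of the $\phi_v$, completing the proof.
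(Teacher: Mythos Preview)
Your overall strategy---strip off initial terms, show they are exponents with minimal negative support, subtract the corresponding $\phi_v$'s, and iterate---is different from the paper's proof, which decomposes $G$ into subseries according to negative-support class and identifies each subseries directly with a multiple of some $\phi_v$.  Both routes can be made to work, but your write-up has three genuine gaps.

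First, the claim that the monomials $x^{a_i}$ in $\inw(\psi)$ have ``pairwise distinct $A$-degrees'' is false: every term of $\psi$ is annihilated by $E-\beta$, so every $a_i$ satisfies $A\cdot a_i=\beta$.  Thus your $A$-homogeneity argument does not separate the summands.  The correct route is the structure theory of the Frobenius ideal $\inww(\HA)$ (Theorems~2.3.9 and~2.3.11 in~\cite{SST}): the logarithm-free solutions of $\inww(\HA)$ are spanned by the monomials $x^v$ with $v$ an exponent, and linear independence of distinct monomials forces each $a_i$ to be one.

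Second, your contradiction in the minimal-negative-support step is misidentified.  With $u$ chosen so that $\nsup(a_i+u)\subsetneq\nsup(a_i)$, you correctly observe that $[a_i+u]_{u_+}=0$, and in fact $[a_i]_{u_-}\neq 0$ (not merely ``need not vanish'': the inclusion $\nsup(a_i+u)\subseteq\nsup(a_i)$ rules out any zero factor).  The coefficient of $x^{a_i-u_-}$ in $\del^{u_+}\psi-\del^{u_-}\psi$ is then $0-c_i[a_i]_{u_-}\neq 0$, contradiction.  No monomial ``at weight strictly below $\mu(\psi)$'' enters; the obstruction is that $x^{a_i-u_-}$ has no logarithm-free $\del^{u_+}$-preimage, exactly as in the paper (which in fact proves this for \emph{every} term of $\psi$, not just the initial ones).

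Third, the termination of your iteration is not a matter of well-ordering or discreteness of $\{w\cdot u\}$ (the latter can fail for irrational $w$).  What makes it work is that the set of exponents is \emph{finite}, each has a fixed $w$-weight, and each step consumes all exponents at one weight level; hence the process stops after finitely many steps, necessarily at zero.
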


The previous result was stated in Display~(7) of~\cite{S}, 
in the special case when $I_A$ is
homogeneous. Its proof for the case when $\beta \in \ZZ^d$ appeared 
in Proposition~4.2 of~\cite{CDRV}; we generalize that argument here.

\begin{proof}
Linear independence of the proposed basis elements follows from
Lemma \ref{thm:aux-ini-indep}, 
so we need only show that these series span $\cS_w(\HA)$.

Let $G(x) \in \cS_w(\HA)$, and suppose that $x^{\nu}$ appears in $G$ with
nonzero coefficient $\lambda_{\nu} \in \CC$. 
We claim that $\nu$ has minimal negative support.
By contradiction, let $u \in \ker_{\ZZ}(A)$ such that $\nsupp(\nu+u)$ is
strictly contained in $\nsupp(\nu)$. This means that there is $1 \leq i
\leq n$ such that $\nu_i \in \ZZ_{<0}$ and $\nu_i+u_i \in \NN$. In
particular, $u_i>0$.

Since $G$ is a solution of $\HA$,
the operator $\del^{u_+}-\del^{u_-} \in I_A$ annihilates $G$. Note that
$\nsupp(\nu+u) \subset \nsupp(\nu)$ implies that 
$\del^{u_-} x^{\nu} \neq 0$. 
Then some term from $\del^{u_+} G$ needs to equal 
$\lambda_v \del^{u_-} x^{\nu}$, 
which is a nonzero multiple of $x^{\nu-u_-}$. But any function $f$
such that $\del^{u_+} f = x^{\nu-u_-}$ must involve $\log(x_i)$. This
produces the desired contradiction.

Fix $\nu$ such that $x^{\nu}$ appears with nonzero coefficient $\lambda_{\nu}$
in $G$, and let $\psi$ be the subseries of $G$ consisting of terms of
the form $\lambda_{\nu+u} x^{\nu+u}$ with $u \in \ker_{\ZZ}(A)$ and
$\lambda_{\nu+u}\in \CC$, such that $\nsupp(\nu+u)=\nsupp(\nu)$. 
Our goal is to show that $\psi$ is a constant multiple of one of the series
from~\eqref{eqn:basis log free}. This will conclude the proof.

We claim that $\psi$ is a solution of $\HA$. That the Euler
operators $\<E-\beta\>$ annihilate $\psi$ follows since they
annihilate every term of $G$. To deal with the toric operators,
recall that
$\del^{u_+} G = \del^{u_-} G$ for all $u \in \ker_{\ZZ}(A)$. But terms
in $\del^{u_+} G$ that come from $\psi$ can only be matched by terms
in $\del^{u_-} G$ that also come from $\psi$, so
$\del^{u_+}-\del^{u_-}$ must annihilate $\psi$, for all $u \in \ker_{\ZZ}(A)$.

Since $G$ is a solution of $\HA$ in the direction of $w$, so is $\psi$.
Therefore $\inw(\psi)$ is a logarithm-free solution of
$\inww(\HA)$. 
Theorems~2.3.9 and~2.3.11 in~\cite{SST} imply that $\inw(\psi)$
is a linear combination of (finitely many) monomial 
functions arising from exponents of $\HA$ with
respect to $w$. By construction of $\psi$, these exponents differ by
elements of $\ker_{\ZZ}(A)$.
Arguing as in the proof of Theorem~3.4.14 in~\cite{SST}, we see that
$\inw(\psi)$ can only have 
one term, that is,
$\inw(\psi) = \lambda_{v} x^{v}$, where
$v$ is an exponent of $\HA$ with respect to $w$ and 
$\lambda_{v} \neq 0$. 
Since $\lambda_{v} x^{v}$ is a term in $G$, $v$ has
minimal negative support. Thus, $v$ is an exponent of $\HA$ with
respect to $w$ that has minimal negative support.

To finish the proof, we show that $\psi = \lambda_{v} \phi_{v}$.
Suppose that $u \in N_{v}$, which means that $u \in \ker_{\ZZ}(A)$
and $\nsupp(v+u)=\nsupp(v)$. The equality of negative supports implies that 
$\del^{u_-} x^{v} = [v]_{u_-}x^{v-u_-}$ is nonzero. Since
$\del^{u_-}\psi=\del^{u_+}\psi$, $\del^{u_+}\psi$ must contain the
term $\lambda_{v} [v]_{u_-}x^{v-u_-}$, which can only come from
$\del^{u_+}\lambda_{v+u}x^{v+u}$. Thus
\[
\lambda_{v} [v]_{u_-} x^{v-u_-} = 
\del^{u_+} \lambda_{v+u}x^{v+u} = 
\lambda_{v+u} [v+u]_{u_+} x^{v+u-u_+}.
\]
Consequently
$\lambda_{v+u} = \lambda_{v} \frac{[v]_{u_-}}{[v+u]_{u_+}}$, that is, the coefficient of
$x^{v+u}$ in $\psi$ equals $\lambda_{v}$ times the coefficient of
$x^{v+u}$ in $\phi_{v}$. Therefore 
$\psi = \lambda_{v}\phi_{v}$, as we wanted.
\end{proof}

The next theorem gives a bijective map between the space of logarithm-free series
solutions of $\HA$ in the direction of $w$ and a subspace of the
logarithm-free solutions of $\HrA$ in the direction of $(0,w)$. Note
that $\cS_w(\HA) \subseteq \Fw(\HA)$ follows immediately from
the previous result, as the series $\phi_v$ are basic Nilsson
solutions of $\HA$ in the direction of $w$.

\begin{theorem}
\label{teo:map-of-fake-exps}
Let $w$ be a weight vector for $\HA$ and let $\beta_0$ be a
homogenizing value for  $A$, $\beta$, and $w$.
Then $\rho(\cS_w(\HA))$ equals the
$\CC$-linear span of the logarithm-free basic Nilsson solutions of
$\HrA$ in the direction of $(0,w)$
which are associated to standard pairs of $\inow(I_{\rho(A)})$ that
pass through zero. 
\end{theorem}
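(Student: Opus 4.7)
The plan is to prove both inclusions, exploiting Theorem~\ref{thm:image of rho} (which already identifies $\rho(\Fw(\HA))$ with the span of basic Nilsson solutions of $\HrA$ associated to standard pairs passing through zero) together with a careful analysis of how $\del_0^{\pm k}$ interacts with the polynomial factors in the log-variables.

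For the inclusion $\rho(\cS_w(\HA)) \subseteq V$, where $V$ denotes the span of logarithm-free basic Nilsson solutions of $\HrA$ in the direction of $(0,w)$ associated to standard pairs of $\inow(I_{\rho(A)})$ that pass through zero, I would invoke Theorem~\ref{prop:bSaito} to reduce to checking the statement on the basis $\{\phi_v\}$, where $v$ ranges over exponents of $\HA$ with minimal negative support. Because each $p_u$ appearing in $\phi_v$ is a scalar, its homogenization $\widehat{p_u}$ is also a scalar by~\eqref{eq:qu}, and each summand $\del_0^{|u|}[x_0^{v_0} x^{v+u} \widehat{p_u}]$ of $\rho(\phi_v)$ is a pure monomial (here Lemma~\ref{lema:invert-del0} handles the case $|u|<0$, its degree clause guaranteeing that inverting $\del_0$ on a constant returns a constant). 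Thus $\rho(\phi_v)$ is logarithm-free, and it lies in $V$ by Theorem~\ref{thm:image of rho} and Proposition~\ref{propo:Nilsson-to-Nilsson}.

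For the reverse inclusion $V \subseteq \rho(\cS_w(\HA))$, take any logarithm-free $\psi \in V$. By Theorem~\ref{thm:image of rho} the map $\rho$ is injective on $\Fw(\HA)$ and its image contains $V$, so there is a unique $\phi = \sum_{u \in C} x^{v+u} p_u(\log x) \in \Fw(\HA)$ with $\rho(\phi) = \psi$. The objective is to force $\phi$ to be logarithm-free. One computes
$$\rho(\phi) = \sum_{u \in C} \del_0^{|u|}\bigl[x_0^{v_0} x^{v+u} \widehat{p_u}(\log x_0,\dots,\log x_n)\bigr],$$
and by Lemmas~\ref{lem:form-of-der} and~\ref{lema:invert-del0} each summand rewrites as $x_0^{v_0-|u|} x^{v+u} \widetilde{q_u}(\log x_0,\dots,\log x_n)$ with $\deg \widetilde{q_u} = \deg \widehat{p_u} = \deg p_u$. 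Since the monomials $x_0^{v_0-|u|} x^{v+u}$ are pairwise distinct for distinct $u \in C$ (the map $u \mapsto (-|u|,u)$ is injective on $\ker_\ZZ(A)$), no cancellation of log-terms across different $u$ is possible. Therefore $\rho(\phi)$ is log-free only if every $\widetilde{q_u}$, and hence every $p_u$, is a constant. That is precisely the condition $\phi \in \cS_w(\HA)$, so $\psi = \rho(\phi) \in \rho(\cS_w(\HA))$, as desired.

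The main obstacle is the reverse inclusion; the forward direction is essentially a direct computation. The crucial ingredient is the degree-preservation clause in Lemma~\ref{lema:invert-del0}: without the equality $\deg \hat q = \deg \hat p$ one would only obtain $\deg \widetilde{q_u} \leq \deg p_u$, which would be insufficient to propagate log-freeness from $\rho(\phi)$ back to $\phi$. Combined with the injectivity of $u \mapsto (-|u|, u)$, which rules out any accidental cancellation among logarithmic contributions coming from different indices $u \in C$, this clause closes the argument.
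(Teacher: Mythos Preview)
Your proof is correct, with one minor imprecision: in the reverse inclusion you write ``take any logarithm-free $\psi \in V$'' and then represent $\rho^{-1}(\psi)$ as a single basic series $\sum_{u\in C} x^{v+u}p_u(\log x)$. This representation is only available when $\psi$ itself is basic; since $V$ is spanned by the logarithm-free \emph{basic} Nilsson solutions, you should say that it suffices to take $\psi$ to be one of these generators (and then the proof of Theorem~\ref{thm:image of rho} guarantees its preimage is basic). Also, the degree equality $\deg\widetilde{q_u}=\deg\widehat{p_u}$ for $|u|>0$ is not stated in Lemma~\ref{lem:form-of-der}; it holds because the coefficient of $\widehat{p_u}$ in the expansion is $[v_0]_{|u|}\neq 0$, as $v_0\notin\ZZ$.

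Your route differs from the paper's. The paper proceeds by explicit identification: it shows that $\rho_{\beta_0}(v)$ has minimal negative support whenever $v$ does, that $N_v$ and $N_{\rho_{\beta_0}(v)}$ correspond under $u\mapsto(-|u|,u)$, and hence that $\rho(\phi_v)=\phi_{\rho_{\beta_0}(v)}$ term by term. This establishes a bijection between the bases of Theorem~\ref{prop:bSaito} on both sides. Your argument is softer: you never name $\rho(\phi_v)$, but instead observe that $\del_0^{\pm k}$ preserves the polynomial degree in the log-variables (for $s_0\notin\ZZ$), so log-freeness transfers through $\rho$ and $\rho^{-1}$. The paper's approach yields the extra information $\rho(\phi_v)=\phi_{\rho_{\beta_0}(v)}$, which is useful elsewhere; your approach is shorter and isolates exactly the property of Lemma~\ref{lema:invert-del0} that makes the theorem work.
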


\begin{proof}
Since $\rho$ is linear,
we only need to consider the image of the elements of a basis for the space
$\cS_w(\HA)$, such as the one given in Theorem~\ref{prop:bSaito}.

We claim that if $v$ is a fake exponent of $\HA$ with respect to $w$
that has minimal negative support,
then $\rho_{\beta_0}(v) = (\beta_0 - |v|, v)$ is a fake exponent of
$\HrA$ with respect to $(0,w)$ corresponding to a standard pair that
passes though zero, and moreover, $\rho_{\beta_0}(v)$ has minimal
negative support. The first part is proved using Lemma~\ref{lema1}. To
see that $\rho_{\beta_0}(v)$ has minimal negative support, first
recall that $\beta_0-|v| \notin \ZZ$ because $\beta_0$ is a
homogenizing value for $A$, $\beta$ and $w$. This implies that $0$ is
not in the negative support of $\rho_{\beta_0}(v) + \mu$, for any
$\mu \in \ker_{\ZZ}(\rho(A))$. Now use the bijection $u \mapsto (-|u|,u)$
between $\ker_{\ZZ}(A)$ and $\ker_{\ZZ}(\rho(A))$ and the fact that
$v$ has minimal negative support, to conclude that $\rho_{\beta_0}(v)$
has minimal negative support.

To complete the proof, we show that 
$\rho(\phi_v) = \phi_{\rho_{\beta_0}(v)}$.
Lemma~\ref{imagenphiv} is this statement in the case when 
$\nsup(v) = \emptyset$, but now we have to pay attention to the
supports of these series. 

The same argument we used to check that $\rho_{\beta_0}(v)$ has
minimal negative support yields $N_v=\pi(N_{\rho_{\beta_0}(v)})$,
where $\pi$ is the projection onto the last $n$ coordinates, and therefore
$\rho(\phi_v)$ and $\phi_{\rho_{\beta_0}(v)}$ have the same support.
The verification that the corresponding coefficients are the same is
straightforward. 
\end{proof}

\section{Convergence of hyper\-geo\-metric Nilsson series}
\label{sec:convergence}

Until now, we have made no convergence considerations in our study of 
Nilsson solutions of $A$-hyper\-geo\-metric systems. 
The purpose of this section is to investigate
convergence issues in detail. In particular, 
Theorem~\ref{ultimoteorema} states 
that, if $w$ is a perturbation of $(1,\dots,1)$, the elements of $\Fw(\HA)$ have a
common domain of convergence. Moreover, assuming that the cone spanned by the
columns of $A$ is strongly convex, results from 
Section~\ref{sec:Hotta} imply that
$\dim_{\CC}(\Fw(\HA)) = \rank(\HA)$. This provides an explicit
construction for the space of (multivalued) holomorphic solutions of $\HA$ 
in a particular open subset of $\CC^n$.

When the parameter $\beta$ is generic and $w$ is a
perturbation of $(1,\dots,1)$, the convergence of the elements of
$\Fw(\HA)$ was shown in~\cite{OT}. In Subsection~\ref{subsec:generic
  convergence} we complete this study by considering other weight
vectors.

\begin{notation}\label{not:basis of kernel}
We have already used the notation $| \cdot |$ to mean the coordinate
sum of a vector. When applied to a monomial, such as $x^u$, $| \cdot|$ 
means complex absolute value. 
Let $w$ be a weight vector for $\HA$ and let
$\left\{ \gamma_1,\ldots, \gamma_{n-d} \right\}\subset\ZZ^n$ 
be a $\ZZ$-basis for $\ker_{\ZZ}(A)$ such that $\gamma_i \cdot w > 0$ for
$i=1,\ldots,n-d$. For any $\be = (\varepsilon_1, \dots,
\varepsilon_{n-d}) \in \RR_{>0}^{n-d}$, we define the (non empty) open set
\begin{equation}\label{eq:Uw}
\mathscr{U}_{w, \be} =
\left\{ x \in \CC^n \mid 
|x^{\gamma_{i}}| < \varepsilon_i  \ \text{ for } \ i=1,\ldots,n-d \right\}.
\end{equation}
\end{notation}

\subsection{General parameters} 
\label{subsec:general convergence}

The following result is the main technical tool in this section.

\begin{theorem}
\label{ultimolema}
Let $w$ be a weight vector for $\HA$ and let
$\{ \gamma_1,\dots, \gamma_{n-d} \} $ be a basis for
$\ker_{\ZZ}(A)$ such that $w \cdot \gamma_i > 0$ for $i=1,\dots,n-d$.
Let $\phi= \sum x^{v+u}p_u(\log(x))$ be a basic Nilsson solution of
$\HA$ in the direction of $w$ as
in~\eqref{eqn:Nilsson series}, such that $|u| \geq 0$ for almost all
$u \in \supp(\phi)$, meaning that the set 
$\{ u \in \supp(\phi) \mid |u| < 0 \}$ 
is finite. Then there exists $\be \in \RR_{>0}^{n-d}$ 
such that $\phi$ converges in the open set
$\mathscr{U}_{w, \be}$.
\end{theorem}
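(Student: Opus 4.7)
The strategy is to deduce convergence of $\phi$ from convergence of its homogenization $\rho(\phi)$, which is a Nilsson solution of the regular holonomic system $\HrA$.

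First, I would fix a homogenizing value $\beta_0$ for $A$, $\beta$, $w$ (Definition~\ref{defi:homog-value}). The hypothesis that $|u| \geq 0$ for all but finitely many $u \in \supp(\phi)$ guarantees that the iterated antiderivatives of Definition~\ref{not:delta0-1} appear only finitely often in the construction of $\rho(\phi)$, and they are well-defined because $v_0 = \beta_0 - |v|$ is not an integer. By Proposition~\ref{propo:Nilsson-to-Nilsson}, $\rho(\phi)$ is a basic Nilsson solution of $\HrA$ in the direction $(0, w)$. Since $(1, \dots, 1)$ lies in the rational rowspan of $\rho(A)$, the toric ideal $I_{\rho(A)}$ is homogeneous and $\HrA$ is regular holonomic by Hotta's theorem. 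The classical convergence result for basic Nilsson solutions of regular holonomic hypergeometric systems (Theorem~2.5.16 of~\cite{SST}, as recalled in Section~\ref{subsec:homog}), applied with respect to the basis $\{(-|\gamma_i|, \gamma_i) : i = 1, \dots, n-d\}$ of $\ker_\ZZ(\rho(A))$---which is adapted to $(0,w)$ because $(0,w)\cdot(-|\gamma_i|, \gamma_i) = w \cdot \gamma_i > 0$---then furnishes $\be' \in \RR_{>0}^{n-d}$ such that $\rho(\phi)$ converges absolutely on
\[
\mathscr{V} = \bigl\{(x_0, x) \in \CC^{n+1} : x_0 \neq 0,\ |x_0^{-|\gamma_i|} x^{\gamma_i}| < \varepsilon_i',\ i = 1, \dots, n-d\bigr\}.
\]

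Next I would specialize to the slice $x_0 = 1$, which meets $\mathscr{V}$ exactly over $\mathscr{U}_{w, \be'}$, and compare $\phi(x)$ with $\rho(\phi)(1, x)$ coefficient-by-coefficient. For each $u \in \supp(\phi)$ with $|u| \geq 0$, Lemma~\ref{lem:form-of-der} together with the Leibniz rule yields
\[
\del_0^{|u|}\bigl[ x_0^{v_0}\, \widehat{p_u}(\log(x_0), \log(x))\bigr]\big|_{x_0 = 1} = L_u(p_u)(\log(x)),
\]
where $L_u$ is a linear endomorphism of the finite-dimensional space of polynomials in $\gamma_1 \cdot \log(x), \dots, \gamma_{n-d} \cdot \log(x)$ of total degree at most $K$. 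Because $\widehat{p_u}$ is built from $p_u$ via the substitution $\gamma_i \cdot \log(x) \mapsto -|\gamma_i|\log(x_0) + \gamma_i \cdot \log(x)$ of~(\ref{eq:qu}), the operator $L_u$ is triangular in the monomial basis ordered by total degree, with diagonal equal to the falling factorial $v_0(v_0 - 1) \cdots (v_0 - |u| + 1)$. As $v_0 \notin \ZZ$ this diagonal is nonzero, and its absolute value grows at least factorially in $|u|$; consequently $L_u^{-1}$ exists and its operator norm is controlled by the inverse of this falling factorial.

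Combining these ingredients, the absolute convergence of $\rho(\phi)(1, x)$ on $\mathscr{U}_{w, \be'}$ transfers, term-by-term via the $L_u^{-1}$, to the absolute convergence of $\phi(x)$ on a (possibly slightly smaller) open set $\mathscr{U}_{w, \be}$; the finite set of corrections arising from indices $u$ with $|u| < 0$ does not affect convergence. The main obstacle is this last step: one must quantify the operator norms of the $L_u^{-1}$ precisely enough (using the triangular structure of $L_u$ and the factorial growth of its diagonal) to conclude that absolute summability of $\rho(\phi)(1,x)$ forces absolute summability of $\phi$, and one must also handle the finitely many antiderivative terms to pin down an explicit $\be \in \RR_{>0}^{n-d}$.
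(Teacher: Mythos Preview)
Your overall strategy---homogenize $\phi$, invoke regularity of $\HrA$ to get convergence of $\rho(\phi)$ on an open set in $\CC^{n+1}$, then specialize to $x_0=1$---is exactly the paper's. The difference lies in how one passes from convergence of $\rho(\phi)$ to convergence of $\phi$, and here your proposal has a real gap.

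The step you yourself flag as ``the main obstacle'' does not go through as written. Absolute convergence of $\rho(\phi)(1,x)=\sum_u x^{v+u}\,L_u(p_u)(\log x)$ at a point $x$ yields only the \emph{scalar} $L_u(p_u)(\log x)$, not the polynomial $L_u(p_u)$. Since $L_u^{-1}$ is an operator on the finite-dimensional space of polynomials of degree $\le K$, bounding $\|L_u^{-1}\|$ lets you compare polynomial norms, not pointwise values: from $|L_u(p_u)(\log x)|$ alone you cannot recover or bound $|p_u(\log x)|$. (In the logarithm-free case $K=0$ the operator $L_u$ is the scalar $[v_0]_{|u|}$ and your comparison is valid; with logarithms it is not.) Using several evaluation points does not help directly either, because the monomial weights $x^{v+u}$ change with the point.

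The paper fills this gap by a different mechanism. Writing $\lambda=\beta_0-|v|$ and grouping terms as $f_m=\sum_{|u|=m} x^{v+u}\widehat{p_u}(\log x_0,\log x)$, one has $\rho(\phi)=\sum_m \del_0^m[x_0^{\lambda}f_m]$, and the target is the convergence of $\sum_m x_0^{\lambda-m}f_m$ (whose value at $x_0=1$ is $\phi$). Since $1/|[\lambda]_m|$ is bounded, it suffices that $\sum_m [\lambda]_m x_0^{\lambda-m}f_m$ converges. The crucial extra input, absent from your outline, is that for each $j$ the series $\sum_m \del_0^m[x_0^{\lambda-j}f_m]$ is itself the homogenization of $\phi$ with parameter $\beta_0$ replaced by $\beta_0-j$, hence a basic Nilsson solution of the \emph{regular} system $H_{\rho(A)}(\beta_0-j,\beta)$ and therefore convergent. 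Combining this with convergence of $\del_0\rho(\phi)$ and iterating shows that $\sum_m \del_0^m[x_0^{\lambda-j}f_m^{(j)}]$ converges for every $j\le K$, where $f_m^{(j)}$ denotes the $j$th derivative in $\log x_0$. A downward induction on $K-\ell$, using the Leibniz expansion of $\del_0^m(x_0^{\lambda-\ell}f_m^{(\ell)})$ and the asymptotics $c_j(\lambda,m)/[\lambda-j]_m\to 0$, then yields convergence of $\sum_m[\lambda-\ell]_m x_0^{\lambda-\ell-m}f_m^{(\ell)}$ for each $\ell$; the case $\ell=0$ is what is needed. Without invoking these auxiliary regular hypergeometric systems (or some other device that recovers the polynomials $L_u(p_u)$, not merely their values, with uniform control in $u$), the term-by-term transfer via $L_u^{-1}$ cannot be completed.
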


\begin{proof}
We may assume without loss of generality that $|u|\geq0$ for all
$u\in\supp(\phi)$. Choose $\beta_0$ a homogenizing value for $A$,
$\beta$ and $w$, and recall
from Definition~\ref{def:homog-sol} 
that the homogenization of $\phi$ is
\[ 
\rho(\phi) = 
\sum_{u\in\supp(\phi)} \del_0^{|u|} \left[ x_0^{\beta_0 -|v|} x^{v+u} 
         \widehat{p_u}(\log(x_0),\log(x_1),\dots,\log(x_n)
  )\right],
\]
where $p_u$ and $\widehat{p_u}$ are related by \eqref{eq:qu}.
By Theorem~\ref{propo:Nilsson-to-Nilsson}, 
$\rho(\phi)$ is a basic Nilsson solution of
$\HrA$ in the direction of $(0,w)$. Since $\HrA$ has regular
singularities, Theorem~2.5.16 in~\cite{SST} implies that there exists
$\be \in \RR_{>0}^{n-d}$ such that 
$\rho(\phi)$ converges (absolutely) in the open set
\[
\mathscr{U}_{(0,w), \be} = \{ (x_0,x) \in \CC^{n+1} \mid 
|x_0^{-|\gamma_i|} x^{\gamma_i}| < \varepsilon_i , i = 1, \dots, n-d\} .
\]
We make use of the convergence of
$\rho(\phi)$ to prove convergence for $\phi$.

As $\rho(\phi)$ converges absolutely in $\mathscr{U}_{(0,w), \be}$,
convergence is preserved when we reorder terms. Use the fact that $|u|
\geq 0$ for all $u \in \supp(\phi)$ to rewrite
 \[
\rho(\phi) 
        = \sum_{m=0}^{\infty} \del_0^m  \left[ x_0^{\beta_0 - |v|} f_m\right]
\]
where 
\[
f_m(x_0,\dots,x_n)= \sum_{|u|=m}x^{v+u}
\widehat{p_u}(\log(x_0),\log(x_1),\dots,\log(x_n) )
\]
is a polynomial in $\log(x_0)$ whose coefficients are (multivalued) holomorphic
functions of the $n$ variables $x_1,\dots,x_n$. 
Recall that, by Definition~\ref{defformal}, there exists a
positive integer $K$ such that 
the degree of $f_m$ in $\log(x_0)$ is less than or equal to $K$ for
all $m \in \NN$.
A key observation is that
\begin{equation}\label{eqn:specialize to 1}
\sum_{m=0}^{\infty} \left. \left(x_0^{\beta_0-|v|-m}
f_m(x_0,x_1,\dots,x_n)\right) \right|_{x_0=1}=\phi(x_1,\ldots,x_n).
\end{equation}
Since 
$\{1\} \times \mathscr{U}_{w,\be} \subset \mathscr{U}_{(0,w),\be}$,
if we show that 
$\sum_{m=0}^{\infty}x_0^{\beta_0-|v|-m} f_m$ converges absolutely on 
$\mathscr{U}_{(0,w),\be}$,
the convergence of $\phi$ on $\mathscr{U}_{w,\be}$ will follow.

For $\lambda \in \CC$ and $m \in \NN$, we denote the $m$-th descending
factorial by 
\[
[\lambda]_m = \lambda(\lambda-1)\ldots(\lambda-m+1).
\]
Set $\lambda = \beta_0-|v|$. Since $\beta_0$ is a homogenizing
value for $A$, $\beta$ and $w$, we have $\lambda \notin\ZZ$.
We claim that the domain of convergence of
$\sum_{m=0}^{\infty}[\lambda]_m x_0^{\lambda-m}f_m$ contains
$\mathscr{U}_{(0,w), \be}$.
But if this is true, the convergence of
$\sum_{m=0}^{\infty}x_0^{\lambda-m} f_m$ on $\mathscr{U}_{(0,w), \be}$
follows by comparison, since the absolute
value of $[\lambda]_m$ grows like $(m-1)!$ as $m$ goes to $\infty$. 
Thus, all we need to show in order to finish our proof is that 
$\sum_{m=0}^{\infty}[\lambda]_m x_0^{\lambda-m}f_m$ converges 
absolutely on $\mathscr{U}_{(0,w), \be}$. 

Consider $f_m$ as a polynomial in $\log(x_0)$. 
By construction, the coefficients of $f_m$ are constant with
respect to $x_0$. Denote by $f_m^{(r)}$ the $r$-th derivative of $f_m$ with
respect to $\log(x_0)$. Then $f_m^{(K+1)}=0$ since the degree of $f_m$
in $\log(x_0)$ is at most $K$.
We compute $\del_0^{m}(x_0^{\lambda}f_m)$ for $m \geq K$, using the fact that
$f_m^{(r)}= 0$ if $r>K$.
\begin{align*}
\del_0^{m}(x_0^{\lambda}f_m) 
 & =\del_0^{m-1}\del_0(x_0^{\lambda}f_m) 
   =\del_0^{m-1}(x_0^{\lambda-1}(\alpha f_m+f_m')) \\
 &
 =\del_0^{m-2}(x_0^{\lambda-2}
(\lambda(\lambda-1)f_m+(\lambda+(\lambda-1))f_m'+f_m'')) \\
 & =\cdots \\
 &
=x_0^{\lambda-m}(c_0(\lambda,m)f_m+c_1(\lambda,m)f_m'+\ldots+c_K(\lambda,m)f_m^{
(K)}),
\end{align*}
where 
{\small\[
c_j(\lambda,m) = \sum_{1 \leq i_1 < \dots < i_{m-j} \leq m} \prod_{k=1}^j
(\lambda-i_k+1).
\]}
Note that, as $m$ goes to $\infty$, the dominant term in absolute
value in $c_j(\lambda,m)$ is $\prod_{r=j}^{m-1}(\lambda-r+1)$,
which grows like $\prod_{r=j}^{m-1}r=\frac{(m-1)!}{(j-1)!}$. 
But then, if $j>0$, $[\lambda-j]_m$ grows faster than $c_j(\alpha,m)$ as $m$
goes
to $\infty$, because $[\lambda-j]_m$ grows like $\frac{(m+j-1)!}{(j-1)!}$. In
other words,
\begin{equation}\label{dos}
\lim_{m\rightarrow\infty}\frac{c_j(\lambda,m)}{[\lambda-j]_m}=0 \quad
\text{for } j\geq1 .
\end{equation}
Since $\rho(\phi)$ converges absolutely on the open set 
$\mathscr{U}_{(0,w),\be}$,
$\del_0\rho(\phi)$ is also absolutely convergent on 
$\mathscr{U}_{(0,w),\be}$, and
\begin{align*}
\del_0\rho(\phi) 
   &=\del_0\sum_{m=0}^{\infty} \del_0^m \left[ x_0^{\lambda} f_m\right] 
    =\sum_{m=0}^{\infty} \del_0^m \left[\del_0 x_0^{\lambda} f_m\right] \\
   &=\sum_{m=0}^{\infty} \del_0^m \left[\lambda x_0^{\lambda-1}
     f_m+x_0^{\lambda}f_m'\right]x_0^{-1} \\
   &=\lambda \sum_{m=0}^{\infty} \del_0^m \left[x_0^{\lambda-1} f_m\right]
+\sum_{m=0}^{\infty} \del_0^m\left[x_0^{\lambda-1}f_m'\right].
\end{align*}
The series
$\sum_{m=0}^{\infty} \del_0^m \left[x_0^{\lambda-1}  f_m\right]$ 
converges in $\mathscr{U}_{(0,w), \be}$ because it is a
basic Nilsson solution of the regular hyper\-geo\-metric system
$H_{\rho(A)}(\beta_0-1,\beta)$ in the direction of $(0,w)$. (We may need
to decrease $\be$ coordinatewise for the previous assertion to hold.)
This, and the convergence of $\del_0(\rho(\phi))$, imply that 
$\sum_{m=0}^{\infty} \del_0^m\left[x_0^{\lambda-1}f_m'\right]$
converges absolutely on $\mathscr{U}_{(0,w), \be}$. Proceeding by
induction, we conclude that
\begin{equation}\label{cuatro}
\sum_{m=0}^{\infty} \del_0^m x_0^{\lambda-j} f_m^{(j)} \quad
\text{converges absolutely on } \mathscr{U}_{(0,w), \be} \text{ for }
j=1,\dots,K.
\end{equation}
Now we induct on $K-\ell$ to show that 
\begin{equation}\label{doce}
\sum_{m=0}^{\infty}x_0^{\lambda-m-\ell}[\lambda-\ell]_mf_m^{(\ell)} \quad
\text{converges absolutely on } \mathscr{U}_{(0,w), \be} \text{ for }
\ell=0,1,\ldots,K.
\end{equation}
The $\ell = 0$ case of this assertion is exactly what we needed to
verify in order to finish the proof.

If $K - \ell = 0$, then $f_m^{(\ell)} = f_m^{(K)}$ is a (maybe zero)
constant with respect to $x_0$, and therefore~\eqref{doce} is the
$j=K$ case of~\eqref{cuatro}.
For the inductive step, we compute the $m$-th derivative inside the series.
The sum $\sum_{m=0}^{\infty} \del_0^m x_0^{\lambda-\ell} f_m^{(\ell)}$ equals
{\small
\begin{align*}
  {} &      \sum_{m=0}^{\infty}x_0^{\lambda-\ell-m}([\lambda-\ell]_m
f_m^{(\ell)}+c_1(\lambda-\ell,m) f_m^{(\ell+1)} +
     \cdots+c_{K-\ell}(\lambda-\ell,m)f_{m}^{(K)})  \\
&  = 
    \sum_{m=0}^{\infty}x_0^{\lambda-\ell-m}[\lambda-\ell]_m
    f_m^{(\ell)}  \\
& \qquad +
    \sum_{m=0}^{\infty}x_0^{\lambda-\ell-m}c_1(\lambda-\ell,m)
    f_m^{(\ell+1)} + \cdots + \sum_{m=0}^{\infty}x_0^{\lambda-\ell-m}
    c_{K-\ell}(\lambda-\ell,m)f_{m}^{(K)}. 
\end{align*}}
We want to show that 
$\sum_{m=0}^{\infty}x_0^{\lambda-\ell-m}[\lambda-\ell]_mf_m^{(\ell)}$
converges absolutely on $\mathscr{U}_{(0,w),\be}$.
We know that $\sum_{m=0}^{\infty} \del_0^m x_0^{\lambda-\ell}
f_m^{(\ell)}$ converges absolutely on $\mathscr{U}_{(0,w),\be}$
by~\eqref{cuatro}, so we need to control the other summands.
But the inductive hypothesis tells us that~\eqref{doce}
is true for $\ell+1,\dots,K$. By comparison using~\eqref{dos}, and
harmlessly multiplying by $x_0^{j}$, we conclude that the series
$\sum_{m=0}^{\infty}x_0^{\lambda-\ell-m}c_j(\lambda-\ell,m)
f_m^{(\ell+j)}$ converges absolutely on $\mathscr{U}_{(0,w),\be}$ 
for $1\leq j<K-\ell$.
\end{proof}

\begin{corollary}\label{coro:convergence-of-solutions-from-111}
If $w$ is a weight vector for $H_A(\beta)$ which is a perturbation of
$(1,\dots,1)$, then all the basic Nilsson solutions of $H_A(\beta)$ in
the direction of $w$ have a common (open) domain of convergence. 
\end{corollary}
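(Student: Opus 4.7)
The plan is to reduce the corollary to a direct application of Theorem~\ref{ultimolema}. The central observation is that the perturbation hypothesis forces the hypothesis of that theorem to hold \emph{for every} basic Nilsson solution, not merely for those with nonnegative-coordinate-sum supports. Let $\phi = x^v \sum_{u \in C} x^u p_u(\log(x))$ be a basic Nilsson solution of $\HA$ in the direction of $w$, and let $\cC$ be an open cone as in Definition~\ref{convention:weight vector} whose closure contains $(1,\dots,1)$, which is exactly Definition~\ref{def:perturbation}. Since $C \subseteq \cC^* = (\overline{\cC})^*$, every $u \in C$ satisfies $u \cdot (1,\dots,1) \geq 0$, that is, $|u| \geq 0$. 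Thus the set $\{u \in \supp(\phi) \mid |u| < 0\}$ is empty, and Theorem~\ref{ultimolema} produces $\be(\phi) \in \RR^{n-d}_{>0}$ such that $\phi$ converges absolutely on $\mathscr{U}_{w,\be(\phi)}$.

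Next I would invoke finite-dimensionality of $\Fw(\HA)$. Since $\HA$ is holonomic (Theorem~3.9 of~\cite{Ado}), so is $\inww(\HA)$, and Proposition~\ref{dimformmenorrankini} gives $\dim_\CC \Fw(\HA) \leq \rank(\inww(\HA)) < \infty$. By Definition~\ref{defformal}, $\Fw(\HA)$ is the $\CC$-span of basic Nilsson solutions, so we may select a basis $\phi_1,\dots,\phi_N$ of $\Fw(\HA)$ consisting of basic Nilsson solutions. Applying the previous paragraph to each $\phi_i$ yields vectors $\be_1,\dots,\be_N \in \RR^{n-d}_{>0}$ with $\phi_i$ converging absolutely on $\mathscr{U}_{w,\be_i}$ for $i=1,\dots,N$.

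Finally, let $\be \in \RR^{n-d}_{>0}$ be the coordinatewise minimum of $\be_1,\dots,\be_N$. Then the non-empty open set
\[
\mathscr{U}_{w,\be} \; = \; \bigcap_{i=1}^{N} \mathscr{U}_{w,\be_i}
\]
is a common domain of absolute convergence for $\phi_1,\dots,\phi_N$. Any basic Nilsson solution of $\HA$ in the direction of $w$ lies in $\Fw(\HA)$ and is therefore a $\CC$-linear combination of $\phi_1,\dots,\phi_N$; hence it too converges absolutely on $\mathscr{U}_{w,\be}$, yielding the desired common domain of convergence.

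I do not anticipate any genuine obstacle, as the entire technical burden was discharged in Theorem~\ref{ultimolema}. The corollary is essentially a packaging statement that couples Theorem~\ref{ultimolema} with two simple remarks: the perturbation hypothesis forces $|u| \geq 0$ on every support appearing in $\Fw(\HA)$, and finite-dimensionality lets us pass from pointwise domains of convergence to a uniform one by taking the coordinatewise minimum of finitely many $\be_i$'s.
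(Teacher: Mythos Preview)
Your proof is correct and follows the same approach as the paper: observe that the perturbation hypothesis forces $|u|\geq 0$ on the support of every basic Nilsson solution, then invoke Theorem~\ref{ultimolema}. Your version is in fact more complete than the paper's, which simply says ``now apply Theorem~\ref{ultimolema}'' without explicitly addressing how to pass from per-series convergence domains to a single \emph{common} domain; your use of finite-dimensionality of $\Fw(\HA)$ and the coordinatewise minimum of the $\be_i$ fills that gap cleanly.
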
 

\begin{proof}
Let $w$ be a weight vector for $\HA$ which is a perturbation of
$(1,\dots,1)$. Recall that, if 
$\phi=x^v \sum_{u\in C} x^u p_u(\log(x))$ 
is a basic Nilsson solution of
$H_A(\beta)$ in the direction of $w$, then $|u|\geq 0$ for $u \in
C$. (See the paragraph after Definition~\ref{def:perturbation}.) 
Now apply Theorem~\ref{ultimolema}.
\end{proof}

One of the main objectives of this article was to construct a basis of
series solutions of $\HA$ that have a common domain of
convergence. While such constructions are well known in the regular
case, when $I_A$ is inhomogeneous, important theoretical tools
become unavailable. A way of bypassing this difficulty is to assume that
the parameters are generic and $w$ is a perturbation of $(1,\dots,1)$
as in~\cite{OT}. The following result gives 
the desired construction, without any assumptions on $\beta$ (but with
the same assumption on $w$).
Its proof can be found in
Section~\ref{sec:Hotta}, after Theorem~\ref{thm:rank-lifting}.

\begin{theorem}
\label{ultimoteorema}
Assume that the cone over the columns of $A$ 
is strongly convex, and let  
$w$ be a weight vector for $\HA$ that is a perturbation of
$(1,\ldots,1)$. Then 
\[
\dim_{\CC}(\Fw(\HA)) = \rank(\HA)
\] 
and
there exists $\be \in \RR_{>0}^{n-d}$ such that
every element of $\Fw(\HA)$ converges in the open set $\cU_{w,\be}$.
\end{theorem}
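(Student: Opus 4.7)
The plan is to reduce the dimension equality to the regular holonomic system $\HrA$ via the homogenization map $\rho$ of Section~\ref{sec:invert-der}, and then invoke Berkesch's rank-lifting theorem (Theorem~\ref{thm:rank-lifting}); the convergence statement will follow from Corollary~\ref{coro:convergence-of-solutions-from-111}.

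For the dimension count, I would first fix a homogenizing value $\beta_0$ for $A$, $\beta$, and $w$. By Theorem~\ref{thm:image of rho}, the map $\rho \colon \Fw(\HA) \to \cN_{(0,w)}(\HrA)$ is injective, and its image is spanned by basic Nilsson solutions of $\HrA$ associated to standard pairs of $\inow(I_{\rho(A)})$ that pass through zero. Since $I_{\rho(A)}$ is homogeneous, $\HrA$ is regular holonomic by Hotta's theorem, and therefore $\dim_{\CC}\cN_{(0,w)}(\HrA) = \rank(\HrA)$ by Theorem~2.5.16 in~\cite{SST}. Combined with Berkesch's rank-lifting theorem, which under the strong convexity hypothesis gives $\rank(\HrA) = \rank(\HA)$, this yields $\dim_{\CC}\Fw(\HA) \leq \rank(\HA)$. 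For the reverse inequality, I would prove that $\rho$ is in fact surjective. The key geometric observation is that when $w$ is a perturbation of $(1,\dots,1)$, the lifted vertex corresponding to the $0$-th column of $\rho(A)$ sits at height $0$ while all other lifted vertices sit at strictly positive height; hence the coherent triangulation $\Delta_{(0,w)}$ is the star at $0$, meaning that every maximal simplex contains $0$. Consequently every top-dimensional standard pair of $\inow(I_{\rho(A)})$ passes through zero, and from this I would deduce that every basic Nilsson solution of $\HrA$ in the direction of $(0,w)$ lies in the image of $\rho$.

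For the convergence statement, the perturbation hypothesis does the work. Any basic Nilsson solution $\phi = \sum_{u \in C} x^{v+u} p_u(\log(x))$ in the direction of $w$ has $C \subseteq \cC^*$, where the open cone $\cC$ has $(1,\dots,1)$ in its closure; by continuity, $|u| = u \cdot (1,\dots,1) \geq 0$ for every $u \in C$. Corollary~\ref{coro:convergence-of-solutions-from-111} then produces, for each basic Nilsson solution, an open domain of convergence of the form $\cU_{w, \be'}$. Since $\Fw(\HA)$ is finite-dimensional by the dimension bound just established, I would fix a finite basis of basic Nilsson solutions and take $\be$ to be the coordinatewise minimum of the associated $\be'$'s; every element of $\Fw(\HA)$ then converges on $\cU_{w,\be}$.

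The main obstacle I expect is the surjectivity of $\rho$, specifically the treatment of embedded standard pairs of $\inow(I_{\rho(A)})$. The star-at-$0$ geometry takes care of all top-dimensional pairs, but in principle an embedded prime of $\inow(I_{\rho(A)})$ could contain $\del_0$ even when none of its minimal primes does, producing embedded standard pairs whose $\sigma$ misses~$0$. Ruling out that such pathological pairs contribute basic Nilsson solutions of $\HrA$ outside the image of $\rho$ is the technical crux; one natural route is to exploit that $\del_0 - 1$ is a non-zero-divisor modulo $I_{\rho(A)}$, so dehomogenization is well-behaved, and to compare initial series associated to standard pairs sharing the same underlying $\sigma \cup \{0\}$.
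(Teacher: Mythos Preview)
Your overall strategy is exactly the one the paper uses: surjectivity of $\rho$ (this is Proposition~\ref{cor:surj-case}), regularity of $\HrA$ to get $\dim_\CC \cN_{(0,w)}(\HrA) = \rank(\HrA)$, Berkesch's Theorem~\ref{thm:rank-lifting} to identify this with $\rank(\HA)$, and finally Theorem~\ref{ultimolema} (equivalently its Corollary~\ref{coro:convergence-of-solutions-from-111}) for convergence. The one genuine gap is the surjectivity argument, precisely where you flagged it, and the resolution is considerably simpler than the route you propose.

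The paper's observation in Proposition~\ref{cor:surj-case} is purely algebraic and dispenses with the star-at-$0$ geometry and the top-dimensional/embedded dichotomy altogether: when $w$ is a perturbation of $(1,\dots,1)$, \emph{no minimal monomial generator of $\inow(I_{\rho(A)})$ is divisible by $\del_0$}. To see this, take a reduced Gr\"obner basis $G$ of $I_A$ for a term order refining $w$; since $w$ is a perturbation of $(1,\dots,1)$, the leading monomial $\inw(g)$ of each $g\in G$ has maximal total degree among the monomials of $g$. Then in $\rho(g)$ the monomial $\inw(g)$ carries no power of $\del_0$, and it is also the $(0,w)$-leading term of $\rho(g)$. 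The set $\{\rho(g):g\in G\}$ is a Gr\"obner basis of $I_{\rho(A)}=\rho(I_A)$ for $(0,w)$ (a standard fact about homogenization with respect to degree-compatible orders), so the minimal generators of $\inow(I_{\rho(A)})$ lie among the $\inw(g)$, none of which involves $\del_0$. Once this is in hand the conclusion is immediate: if $\del^u\notin \inow(I_{\rho(A)})$ then $\del_0^k\del^u\notin \inow(I_{\rho(A)})$ for every $k\in\NN$, so the $\del_0$-direction is free at every standard monomial, and \emph{every} standard pair---embedded or not---passes through $0$. Theorem~\ref{thm:image of rho} then gives surjectivity of $\rho$. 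Your proposed non-zero-divisor argument is not needed.

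Two minor remarks. First, your geometric claim that $\Delta_{(0,w)}$ is the star at $0$ is actually a \emph{consequence} of the algebraic statement above (it says precisely that no minimal prime of $\inow(I_{\rho(A)})$ contains $\del_0$); by itself it only controls top-dimensional pairs, which is why the paper argues at the level of monomial generators instead. Second, for the equality $\dim_\CC \cN_{(0,w)}(\HrA)=\rank(\HrA)$ the relevant reference in~\cite{SST} is Corollary~2.4.16 rather than Theorem~2.5.16; the latter is a convergence statement.
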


\subsection{Generic parameters} 
\label{subsec:generic convergence}

In this subsection, we assume that $\beta$ is generic as in
Convention~\ref{remark:genericity of beta}.
In this case, by Theorem~\ref{generadoresnilsson}, the set
\begin{equation}
\label{eqn:Bw}
\cB_w=
\{
\phi_v \mid v  \textrm{ is an exponent of }  \HA \textrm{ with respect
  to } w\}
\end{equation}
is a basis for $\Fw(\HA)$, and we can write the series $\phi_v$ is as
in~\eqref{supp}, by the genericity of the parameters.

We wish to determine which elements of $\cB_w$ have an open domain of
convergence. This depends 
on the choice of the weight vector $w$.

\begin{theorem}
\label{teo:condnecysufparaconvdephi}
Let $\beta$ generic, $w$ a weight vector for $\HA$, and 
$\phi_v \in \cB_w$. There exists $\be \in \RR^{n-d}_{>0}$ such
that $\phi_v$ converges in $\cU_{w,\be}$ if and only if  
$|u| \geq 0$ for almost all $u\in\supp(\phi_v)$.
\end{theorem}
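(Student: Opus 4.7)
My plan to prove the theorem is as follows.

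The ``if'' direction is an immediate application of Theorem~\ref{ultimolema}: the finiteness of $\{u \in \supp(\phi_v) : |u| < 0\}$ is precisely its hypothesis, and it delivers $\be \in \RR_{>0}^{n-d}$ with $\phi_v$ convergent on $\cU_{w,\be}$.

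For the ``only if'' direction I would argue by contrapositive. Suppose the set $S = \{u \in \supp(\phi_v) : |u| < 0\}$ is infinite, and aim to show $\phi_v$ does not converge on any $\cU_{w,\be}$. The argument splits into a combinatorial step followed by an asymptotic estimate.

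The combinatorial step, which I expect to be the main obstacle, is the following claim: the infinitude of $S$ forces the existence of $u_0 \in \supp(\phi_v)$ and $\gamma \in \cC^* \cap \ker_{\ZZ}(A)$ with $\gamma_i \geq 0$ for all $i \notin \sigma$, $|\gamma| < 0$, and $u_0 + t\gamma \in \supp(\phi_v)$ for all $t \in \NN$. Here $(\del^\alpha,\sigma)$ denotes the standard pair of $\inw(I_A)$ associated to $v$, so that the support constraint is $u_i \geq -\alpha_i$ for $i \notin \sigma$. Using Dickson's lemma, $\supp(\phi_v)$ can be written as a finite union of translates of the recession semigroup $R = \cC^* \cap \ker_{\ZZ}(A) \cap \{\gamma : \gamma_i \geq 0 \ \forall i \notin \sigma\}$; the infinitude of $S$ then forces $R$ to contain a Hilbert basis element with strictly negative coordinate sum, from which the desired $\gamma$ and $u_0$ are extracted. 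The delicate bookkeeping is to ensure $|\gamma|<0$, not merely $|\gamma|\le 0$, while simultaneously preserving the support constraint along the ray.

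For the asymptotic step, Proposition~\ref{propo:phiv} gives, up to a nonzero constant independent of $u$, the coefficient of $x^{v+u}$ in $\phi_v$ as
\[
c_u \;=\; \prod_{i=1}^n \frac{\Gamma(v_i+1)}{\Gamma(v_i + u_i + 1)}.
\]
Along the ray $u = u_0 + t\gamma$, Stirling's asymptotic formula, combined with the reflection formula $\Gamma(z)\Gamma(1-z) = \pi/\sin(\pi z)$ for those indices with $\gamma_i < 0$ (whose $\sin$ factors remain bounded away from zero thanks to the genericity of $v$), yields
\[
\log |c_{u_0 + t\gamma}| \;=\; -\,t\,|\gamma|\,\log t \;+\; O(t) \qquad (t \to \infty).
\]
Since $|\gamma|<0$, this grows like $(-|\gamma|)\,t\log t$, so $|c_{u_0+t\gamma}|$ grows super-exponentially in $t$, whereas $|x^{v+u_0+t\gamma}| = |x^{v+u_0}|\cdot |x^\gamma|^{t}$ decays at most exponentially in $t$ for any $x \in \cU_{w,\be}$. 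Therefore the individual terms of $\phi_v$ along the ray do not tend to zero, contradicting absolute convergence on $\cU_{w,\be}$ and completing the proof.
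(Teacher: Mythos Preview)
Your proposal is correct and follows essentially the same two-step structure as the paper's proof: the ``if'' direction via Theorem~\ref{ultimolema}, and the ``only if'' direction by locating a ray in the support along which the coefficients grow super-exponentially while the monomials vary only exponentially.

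The only noteworthy difference is in the combinatorial step. You invoke Dickson's lemma and the recession semigroup to produce $u_0$ and $\gamma$; the paper bypasses this machinery entirely. Since $\beta$ is generic, the explicit description in~\eqref{supp} says $\supp(\phi_v)=\{u\in\ker_\ZZ(A)\mid u_i\ge -\alpha_i\ \forall i\notin\sigma\}$, which is the set of lattice points in a translated simplicial cone (because $|\sigma^c|=n-d$ and the projection of $\ker_\RR(A)$ onto those coordinates is an isomorphism). The linear functional $|\cdot|$ is bounded below on this set if and only if it is nonnegative on the recession cone; so if $\{|u|<0\}$ is infinite there is an extreme ray generator $\nu$ with $\nu_i\ge 0$ for $i\notin\sigma$ and $|\nu|<0$, and then $m\nu\in\supp(\phi_v)$ for every $m\ge 1$ (since $\alpha_i\ge 0$). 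In other words, one can take $u_0=0$ and $\gamma=\nu$ directly, without Dickson. Your asymptotic step via Stirling and the reflection formula is a more explicit rendering of the paper's factorial growth estimate $\prod_{\nu_i<0}(-\nu_i m)!/\prod_{\nu_i>0}(\nu_i m)!\to\infty$; both reach the same conclusion.
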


\begin{proof}
If $|u| \geq 0$ for almost all $u \in \supp(\phi_v)$, then $\phi_v$
converges on an open set $\cU_{w,\be}$ by Theorem~\ref{ultimolema}.

Now assume that there exist an infinite number of elements
$u\in\supp(\phi_v)$  such that $|u|<0$. Using the
description of $\supp(\phi_v)$  
from~\eqref{supp}, which applies when $\beta$ is generic, we can find 
$\nu \in \supp(\phi_v)$ such that $|\nu|<0$ and
$\{m\nu \mid  m\geq m_0\in\NN\}\subset\supp(\phi_v)$ for some
$m_0\in\NN$. 

Let $\psi$ be the subseries of $\phi_v$ whose terms are indexed by the set
$\{ m \nu \mid m \geq m_0 \in \NN\}$. The coefficient of $x^{v+m\nu}$
in $\psi$ is
\[
\frac{\prod_{\nu_i<0} \prod_{j=1}^{-m\nu_i}
  (v_i-j+1)}{\prod_{\nu_i\geq 0}\prod_{j=1}^{m\nu_i}(v_i+j)}\ ,
\]
which grows like 
$\lambda_m=\prod_{\nu_i<0} (-\nu_im)!/ \prod_{\nu_{i}>0} (\nu_im)!$
as $m$ goes to $\infty$. Since
$|\nu|<0$, $\lim_{m\rightarrow \infty} \lambda_m=\infty$.
Therefore $\psi$ cannot absolutely converge unless $x_i=0$ for some
$i$ such that $\nu_i>0$, and consequently $\phi_v$ does not have an
open domain of convergence.
\end{proof}

\begin{remark}
\label{remark:change of weight}
In this section we study convergence of Nilsson solutions of $\HA$ in
the direction of a weight vector $w$. We can change 
the point of view and fix a basis
$\{\gamma_{1},\ldots,\gamma_{n-d}\}$ of
$\ker_{\ZZ}(A)$; then our
results apply to any weight vector $w$ such that $\gamma_{i}\cdot w > 0$ for
$i=1,\ldots,n-d$.
\end{remark}

Since $\beta$ is generic, all the information necessary to compute the
Nilsson solutions of $\HA$ associated to a weight vector $w$
can be extracted from the top-dimensional
standard pairs of $\inow(I_{\rho(A)})$; the simplices appearing in
these standard pairs are the maximal simplices of the coherent
triangulation $\Delta_{(0,w)}$ of 
$\rho(A)$. These triangulations also control the possible regions of
convergence of basic Nilsson solutions of $\HA$ in the direction of $w$,
as a change in triangulation changes $\mathscr{U}_{w,\be}$.
There is an object that parametrizes all coherent triangulations of 
the configuration $\rho(A)$. This object is called
\emph{secondary fan} of $\rho(A)$, and was introduced by
Gelfand, Kapranov and Zelevinsky (see Chapter~7 of~\cite{GKZlibro}).

To construct the secondary fan of $\rho(A)$, we need a Gale dual
for $\rho(A)$, that is, a matrix $B \in \ZZ^{(n-d)\times(n+1)}$ whose columns 
form a basis for $\ker_{\ZZ}(\rho(A))$. Denote by $b_0,\ldots,b_n$ the rows
of $B$, and let
$\Delta$ be a coherent triangulation of $\rho(A)$.
For each maximal simplex
$\sigma\in\Delta$, we define a cone
\[
\cK_{\sigma}=\bigg\{\sum_{i\notin\sigma}\lambda_ib_i  \ \bigg| \ 
  \lambda_i \geq 0   \bigg\} .
\]
Note that the set $\{b_i \mid i\notin\sigma\}$ is linearly 
independent by Lemma~7.1.16 in~\cite{GKZlibro}, and therefore
$\cK_{\sigma}$ is full-dimensional.
Define
$
\cK_{\Delta} = \cap_{\sigma \in \Delta} \cK_{\sigma}
$.
Then $(w_0,w)\in \RR^{n+1}$ is such that $\Delta_{(w_0,w)} = \Delta$
if and only if $ (w_0,w) \cdot B$ belongs to the interior $\cK_{\Delta}^\circ$
of $\cK_{\Delta}$. The cones 
$\cK_{\Delta}$ for all coherent triangulations $\Delta$
of $\rho(A)$ are the maximal cones in a polyhedral fan, 
called the \emph{secondary fan of $\rho(A)$}
(see also Theorem~7.1.17 in~\cite{GKZlibro}).
%
%

Given $\beta$ generic, fix $w$ a weight vector for $\HA$.
Then the supports of the basic Nilsson solutions of $\HA$ in the
direction of $w$
can be described by means of the cones associated to maximal simplices
of the triangulation $\Delta_{(0,w)}$ of $\rho(A)$.
Indeed, if 
$(v_0,v)$ is the exponent associated to a standard pair
$(\partial^{\alpha},\sigma)$ of the monomial ideal $\inow(I_{\rho(A)})$, so
that 
$\sigma\in\Delta_{(0,w)}$ and $0\in\sigma$, we know that
the support of the dehomogenized series $\phi_v$ is
\[
\supp(\phi_v)=\{u\in \ker_{\ZZ}(A) \mid u_i+v_i\geq0 \ \ \forall
i\notin\sigma\}.
\]
Note that, as $0 \in \sigma$, the zeroth row of the Gale dual $B$ is
not present in this description.
Since the columns of $B$ span $\ker_{\ZZ}(\rho(A))$, 
the support of $\phi_v$ is naturally identified with 
\begin{equation}\label{eqn:support depends on cone}
\supp(\phi_v)= \left\{ (b_1\cdot \nu,\dots, b_n \cdot \nu) \mid  \nu
  \in\ZZ^{n-d} \text{ and }  \nu \cdot b_i 
  \geq-v_i, \ i\notin\sigma\right\}. 
\end{equation}

Theorem~\ref{teo:conv} gives a combinatorial condition for 
a series
$\phi_v$ associated to a cone $\cK_{\sigma}$ (that is, to a standard pair
$(\partial^{\alpha},\sigma)$) to have an open
domain of convergence. 
Note that
several series may be associated with a single cone, and some of them
may have open domains of convergence, while others do not, see
Example~\ref{ex:counterex-6.6}. 

Given $w$ a weight vector for $\HA$, let
$
\{(-|\gamma_1|,\gamma_1),\ldots,(-|\gamma_{n-d}|,\gamma_{n-d})\}
$
be a $\ZZ$-basis of $\ker_{\ZZ}(\rho(A))$ such 
that for any $i=1,\ldots,n-d$, we have $\gamma_i\cdot w > 0$.
The vectors in this basis are the columns of a Gale dual matrix of
$\rho(A)$, whose rows we denote by $b_0,\ldots,b_n$.

\begin{theorem}
\label{teo:conv}
Let $\beta$ be generic, $w$ a weight vector for $\HA$ and choose
a Gale dual of $\rho(A)$ as above.
Let $(v_0,v)$ be an exponent of $\rho(A)$ corresponding to a standard
pair $(\del^{\alpha},\sigma)$ of $\inow(I_{\rho(A)})$ that passes
through zero.
\begin{enumerate}[$\bullet$]
\item If $-b_0$ belongs to the interior of $\cK_{\sigma}$, then the series
$\phi_v$ has an open domain of convergence that contains $\cU_{w,\be}$
for some $\be \in \RR^{n-d}_{>0}$.
\item If $-b_0 \notin \cK_{\sigma}$, then the series $\phi_v$ does not
  have an open domain of convergence.
\end{enumerate}
\end{theorem}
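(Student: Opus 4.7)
The plan is to apply Theorem~\ref{teo:condnecysufparaconvdephi}, which characterizes existence of an open domain of convergence for $\phi_v$ by finiteness of the set $\{u \in \supp(\phi_v) \mid |u| < 0\}$. The goal is therefore to translate this combinatorial condition into the location of $-b_0$ relative to $\cK_\sigma$; the Gale-dual setup is designed precisely for such a translation.

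First, I would use~\eqref{eqn:support depends on cone} to parametrize $\supp(\phi_v)$ by the lattice points of the polyhedron
\[
P = \{\nu \in \RR^{n-d} \mid b_i \cdot \nu \geq -v_i \text{ for all } i \notin \sigma\},
\]
via the linear map $\nu \mapsto u(\nu) := (b_1 \cdot \nu, \ldots, b_n \cdot \nu)$. Two observations drive the translation. Since the columns of the Gale dual $B$ lie in $\ker_\ZZ(\rho(A))$ and the top row of $\rho(A)$ is $(1,\ldots,1)$, the row sum $\sum_{i=0}^n b_i$ vanishes in $\RR^{n-d}$, so
\[
|u(\nu)| = \sum_{i=1}^n b_i \cdot \nu = -b_0 \cdot \nu.
\]
Furthermore, because $b_0 = -\sum_{i \geq 1} b_i$ and $B$ has rank $n-d$, the rows $b_1, \ldots, b_n$ span $\RR^{n-d}$, so $\nu \mapsto u(\nu)$ is injective. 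By Lemma~\ref{lemma:fake-exps}, $v_i = \alpha_i \in \NN$ for $i \notin \sigma$, so $\nu = 0$ belongs to $P$, and the recession cone of $P$ equals $R := \cK_\sigma^\vee$.

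For the first bullet, assume $-b_0 \in \cK_\sigma^\circ$. Since $\cK_\sigma$ is full-dimensional and $\cK_\sigma^{\vee\vee} = \cK_\sigma$, this is equivalent to $-b_0 \cdot r > 0$ for every nonzero $r \in R$. The polyhedron $\{\nu \in P \mid -b_0 \cdot \nu < 0\}$ therefore has recession cone $R \cap \{r : -b_0 \cdot r \leq 0\} = \{0\}$, hence is bounded and contains only finitely many lattice points. Via injectivity of $\nu \mapsto u(\nu)$, only finitely many $u \in \supp(\phi_v)$ satisfy $|u| < 0$, and Theorem~\ref{teo:condnecysufparaconvdephi} delivers a $\cU_{w,\be}$ inside the domain of convergence.

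For the second bullet, assume $-b_0 \notin \cK_\sigma = R^\vee$. Since $R$ is a rational polyhedral cone with integer generators, some primitive integer generator $r \in R \cap \ZZ^{n-d}$ must satisfy $-b_0 \cdot r < 0$ (otherwise $-b_0$ would pair nonnegatively with every generator of $R$ and hence lie in $R^\vee$). Because $0 \in P$ and $r$ lies in the recession cone $R$, every $m r$ with $m \in \NN$ lies in $P \cap \ZZ^{n-d}$, and these map under $u(\cdot)$ to infinitely many distinct elements of $\supp(\phi_v)$ with $|u(m r)| = m(-b_0 \cdot r) \to -\infty$. Theorem~\ref{teo:condnecysufparaconvdephi} then forbids an open domain of convergence. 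The main obstacle is purely bookkeeping through Gale duality; once the identity $|u(\nu)| = -b_0 \cdot \nu$ is established, both cases reduce to elementary facts about the duality between a rational polyhedral cone and its dual.
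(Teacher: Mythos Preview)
Your proposal is correct and follows essentially the same approach as the paper: both arguments parametrize $\supp(\phi_v)$ by lattice points $\nu$ in the polyhedron $P=\{\nu:b_i\cdot\nu\ge -v_i,\ i\notin\sigma\}$, establish the key identity $|u(\nu)|=-b_0\cdot\nu$, and then reduce the two bullets to (un)boundedness of $\{\nu\in P:-b_0\cdot\nu<0\}$ before invoking Theorem~\ref{teo:condnecysufparaconvdephi}. The only cosmetic difference is that you phrase things via recession cones and the duality $R=\cK_\sigma^\vee$, whereas the paper works directly with the explicit basis $\{b_i:i\notin\sigma\}$ (writing $-b_0=\sum_{i\notin\sigma}\lambda_i b_i$ and, in the second bullet, taking the dual-basis ray corresponding to an $i_0$ with $\lambda_{i_0}<0$).
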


\begin{proof}
%
%
For the first statement, suppose that $-b_0 \in \cK_{\sigma}^{\circ}$, so that 
$-b_0 = \sum_{i\notin \sigma} \lambda_ib_i$ with $\lambda_i > 0$. 
Then the polyhedron 
$\{ \nu \in \RR^{n-d} \mid \nu \cdot b_i \geq -v_i, i \notin \sigma, -b_0 \cdot \nu < 0
\}$
is bounded and can only contain finitely many points with
integer coordinates.
Using~\eqref{eqn:support depends on cone} and the fact that for $u =
(b_1 \cdot \nu, \dots, b_n\cdot \nu) \in \supp(\phi_v)$ we have:
\[
|u| = \sum_{i=1}^n b_i \cdot \nu = -b_0 \cdot \nu,
\]
we see that $|u| \geq 0$ for all but at most finitely many elements of
$\supp(\phi_v)$.  
Now apply Theorem~\ref{teo:condnecysufparaconvdephi} to conclude that
$\phi_v$ has an 
open domain of convergence of the desired form.

We now prove the second statement.
As $\{b_i \mid i\notin\sigma\}$ is a
basis of $\RR^{n-d}$, we can write $-b_0=\sum_{i\notin\sigma}\lambda_ib_i$.
Suppose that  $\lambda_{i_0}<0$ for some $i_0\notin\sigma$, and
consider the  infinite set
\[
\{ \nu \in \ZZ^{n-d} \mid \nu \cdot b_i = 0 \text{ for }
i \notin \sigma \cup \{ i_0 \} \textrm{ and } \nu \cdot b_{i_0} > 0 \}.
\]
For each element $\nu$ of this set, $(\nu \cdot
b_1,\dots, \nu\cdot b_n)$ is an element of $\supp(\phi_v)$, and the sum
of its coordinates is 
\[
\sum_{i=1}^n \nu \cdot b_i = \nu \cdot \big( \sum_{i=1}^n b_i \big) =
\nu \cdot (-b_0) = \nu \cdot (\sum_{i \notin \sigma} \lambda_i b_i) =
\lambda_{i_0} (\nu \cdot b_{i_0}).
\]
Thus, $\supp(\phi_v)$ has an infinite subset consisting of vectors
whose coordinate sum is negative, and by
Theorem~\ref{teo:condnecysufparaconvdephi}, $\phi_v$ does not have an
open domain of convergence.
\end{proof}

\begin{example}
\label{ex:counterex-6.6}
If $-b_0$ is in the boundary of $\cK_{\sigma}$, a series arising
from $\sigma$ may or may not have an open domain of convergence,
depending on the specific standard pair it belongs to. For instance,
consider:
\[
A = \left[ \begin{array}{rrrr}
0 & 1 & 0 & -1 \\
1 & 0 & 2 & 4
\end{array} \right].
\]
Then the convex hull of $\rho(A)$ is a triangle of normalized volume
$4$. For the weight $(0,w) =$ $(0,1,1,3,3)$, the corresponding triangulation of
$\rho(A)$ has only one maximal simplex, $\conv(\rho(A))$ itself.
In this case,
\[
I_{\rho(A)} = \langle
\del_3^2-\del_2\del_4,\del_1^2-\del_0\del_3 \rangle 
\quad \text{and} \quad
\inow(I_{\rho(A)}) = \langle
\del_0\del_3,\del_1^4,\del_1^2\del_3,\del_3^2\rangle.
\] 
The monomial ideal $\inow(I_{\rho(A)})$ has four top-dimensional standard pairs:
\[ (1, \{0,2,4\}), \quad (\del_1,\{0,2,4\}), 
\quad (\del_1^2, \{0,2,4\}) \quad \text{and} \quad (\del_1^3,\{0,2,4\}). 
\]
A Gale dual of $\rho(A)$ is 
\[
B = \left[ \begin{array}{rr}
1 & 0 \\
-2 & 0 \\
0 & -1 \\
1 & 2 \\
0 & -1
\end{array}\right]
\]
and we have $-b_0 = (1/2) b_1$, so that $-b_0$ is in the boundary of
$\cK_{\sigma}$ for $\sigma = \{ 0,2,4 \}$, the unique maximal simplex in the
triangulation of $\rho(A)$ induced by $(0,1,1,3,3)$.

For a given generic $\beta$ and $0 \leq j \leq 3$, let $\phi_j$ denote
the logarithm-free series solution of $\HA$ associated to the standard
pair $(\del_1^j,\sigma)$. Then
\[
\supp(\phi_j) = 
\left\{ u = (-2 \nu_1, -\nu_2, \nu_1 + 2 \nu_2,-\nu_2 ) \left| 
\begin{array}{l}
\nu=(\nu_1,\nu_2) \in \ZZ^2, \\
-2 \nu_1 \geq -j, \;\;
\nu_1+2\nu_2 \geq 0
\end{array}
\right\}
\right.
.
\]
For $u \in \supp(\phi_j)$ as above, $|u| = -\nu_1 \geq
-j/2$. Consequently, if $j=0,1$, $|u| \in \ZZ$ implies that $|u| \geq
0$ for all $u \in \supp(\phi_j)$, and therefore $\phi_j$ has an open
domain of convergence by 
Theorem~\ref{teo:condnecysufparaconvdephi}. 

If $j=2,3$, consider $\nu=(1,k)$ for $k \in \NN$. Then
\[
-2\nu_1  = -2 \geq -j \quad \text{and} \quad \nu_1 +2\nu_2 =1+2k \geq
0 \quad \text{for} \; k \in \NN. 
\]
This means that 
$u=(-2 \nu_1, -\nu_2, \nu_1+2\nu_2,-\nu_2 )=(-2,-k,1+2k,-k) \in \supp(\phi_j)$ for
$k\in \NN$. But then $|u| = -1$ for infinitely many elements of
$\supp(\phi_j)$, and therefore $\phi_j$ does not have an open domain
of convergence.

Geometrically, consider the polyhedron 
\[
Q_j = \{
\nu \in \RR^2 \mid -2 \nu_1 \geq -j, \;\;
\nu_1 -2 \nu_2 \geq 0
\},
\]
which is a translate of a cone. Let $L$ be the line orthogonal to 
$-b_0 = (-1,0)$.
Since $L$ contains one of the faces of $Q_0$,
there are no points in this polyhedron whose dot product with $-b_0$ is strictly
negative.
If $j = 1, 2, 3$, then the set $ Q_j \cap \{ \nu \in \RR^2 \mid -b_0
\cdot \nu < 0 \}$ in unbounded. However, when $j=1$, this set contains
no integer points. When $j=2,3$,  $ Q_j \cap \{ \nu \in \RR^2 \mid -b_0
\cdot \nu < 0 \}$
contains infinitely many integer points. 

This example also illustrates the fact that, in order to determine 
whether a basic Nilsson solution of $H_A(\beta)$ in the direction 
of $w$ has an open domain of convergence, knowledge of the triangulation 
of $\rho(A)$ induced by $(0,w)$ is not sufficient.
For instance, note that $(1,1,2,1)$ is a perturbation of $(1,1,1,1)$, so all Nilsson 
solutions of $H_A(\beta)$ in the direction of $(1,1,2,1)$ have an open domain 
of convergence by Corollary~\ref{coro:convergence-of-solutions-from-111}.
However, $(0,1,1,2,1)$ and $(0,1,1,3,3)$ induce the same triangulation of $\rho(A)$, 
and we already know that there are Nilsson solutions of $H_A(\beta)$ in the direction 
of $(1,1,3,3)$ which do not have open domains of convergence.
\end{example}

As a consequence of Theorem~\ref{teo:conv}, we have combinatorial
bounds for the dimension of the space
of convergent Nilsson solutions of $\HA$ in the direction of $w$, that
holds for generic parameters $\beta$.

\begin{corollary}
\label{dimconv}
Let $\beta$ be generic, and $w$ a weight vector for $\HA$. Then
\[
\sum_{\sigma\in T}\vol(\sigma) \geq \dim_{\CC}(\{\phi \in \Fw(\HA) \text{ convergent }\})
\geq
\sum_{\sigma\in T_\circ}\vol(\sigma)
 \, ,
\]
where 
\[
T_\circ=\left\{
\begin{array}{l}
\sigma {\text{ facet }} \\
{\text{of }}\Delta_{(0,w)}
\end{array}
\left|
\begin{array}{l} 
0\in\sigma \\
-b_0\in\cK_{\sigma}^{\circ} 
\end{array}
\right.
\right\};
\quad
T=\left\{
\begin{array}{l}
\sigma {\text{ facet}}\\
{\text{of }}\Delta_{(0,w)} 
\end{array}
\left|
\begin{array}{l}
0\in\sigma \\
-b_0\in\cK_{\sigma} 
\end{array}
\right.
\right\}
. 
\]
\qed
\end{corollary}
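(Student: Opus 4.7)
The plan is to reduce the dimension estimate to counting the convergent members of the basis $\cB_w$ of $\Fw(\HA)$ provided by Theorem~\ref{generadoresnilsson}, and then to apply Theorem~\ref{teo:conv} to each basis element individually.

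First I would argue that the space of convergent elements of $\Fw(\HA)$ is precisely the $\CC$-span of those $\phi_v \in \cB_w$ that individually admit an open domain of convergence. Since $\beta$ is generic in the sense of Convention~\ref{remark:genericity of beta}, no two distinct exponents of $\HA$ with respect to $w$ differ by an element of $\ZZ^n$, and \emph{a fortiori} not by an element of $\ker_{\ZZ}(A)$. Because each $\phi_v$ is logarithm-free (by Proposition~\ref{propo:phiv}) and its monomial support is contained in $v + \ker_{\ZZ}(A)$, the monomial supports of distinct $\phi_v$'s are pairwise disjoint subsets of $\CC^n$. Consequently, in any linear combination $\sum c_v \phi_v$, each monomial $x^\mu$ receives a contribution from at most one summand; no cancellation between distinct $\phi_v$'s is possible. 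Absolute convergence of the combination on an open set $U$ therefore transfers to each $c_v \phi_v$ with $c_v \neq 0$, so the convergent subspace of $\Fw(\HA)$ is spanned by the convergent $\phi_v$'s.

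Next I would invoke Theorem~\ref{teo:conv}. By Lemma~\ref{rhobeta0}, every exponent $v$ of $\HA$ with respect to $w$ is attached to a top-dimensional standard pair $(\del^\alpha, \sigma)$ of $\inow(I_{\rho(A)})$ with $0 \in \sigma$. Theorem~\ref{teo:conv} then says that $\phi_v$ converges on some $\mathscr{U}_{w,\be}$ whenever $-b_0 \in \cK_\sigma^\circ$, and fails to admit an open domain of convergence whenever $-b_0 \notin \cK_\sigma$. Hence the number of convergent $\phi_v$'s is bounded below by the number of such standard pairs with $\sigma \in T_\circ$ and above by the number with $\sigma \in T$. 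Exactly as in the proof of Corollary~\ref{coro:dimnilsson}, for each fixed facet $\sigma$ of $\Delta_{(0,w)}$ with $0 \in \sigma$, the number of top-dimensional standard pairs of $\inow(I_{\rho(A)})$ of the form $(\del^\alpha, \sigma)$ equals $\vol(\sigma)$ by Lemma~3.3 in~\cite{STV} together with Theorem~8.8 in~\cite{Stu}. Summing over $\sigma$ yields the claimed inequalities.

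The main obstacle is the disjointness-of-supports step underlying the upper bound: it relies essentially on the assumption that no two exponents differ by an integer vector, since otherwise monomials could be shared between different $\phi_v$'s and a convergent linear combination could in principle hide a non-convergent summand through cancellation. The gap between the two bounds, corresponding to the indeterminate boundary case $-b_0 \in \partial \cK_\sigma$, is genuinely unavoidable, as witnessed by Example~\ref{ex:counterex-6.6}.
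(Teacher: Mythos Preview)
Your proposal is correct and follows essentially the same route the paper intends: the corollary is marked \qed immediately after the statement, indicating it is meant as a direct consequence of Theorem~\ref{teo:conv} together with the standard-pair count used in Corollary~\ref{coro:dimnilsson}. You make explicit the one point the paper leaves implicit, namely that the disjointness of monomial supports (guaranteed by the genericity assumption that no two exponents differ by an integer vector) forces any convergent linear combination to have only convergent summands, which is exactly what is needed for the upper bound.
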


The second part of Theorem~\ref{teo:conv} can be restated in a
more combinatorial fashion.

\begin{corollary}\label{coro: dibujitos}
Let $\beta$ be generic, and $w$ a weight vector for $\HA$.
If an element of the set $\cB_w$ from~\eqref{eqn:Bw} has an open domain
of convergence, then
its associated maximal simplex in the triangulation $\Delta_{(0,w)}$
of $\rho(A)$ also belongs to a coherent triangulation of $\rho(A)$
defined by $(0,w')$, where $w'$ is a perturbation of $(1,\dots,1)$.
\end{corollary}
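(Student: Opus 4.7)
The plan is to translate the hypothesis that $\phi_v$ has an open domain of convergence into a statement about the position of $-b_0$ in the secondary fan, and then produce the desired weight $w'$ by a direct perturbation argument.

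First, let $\sigma$ be the maximal simplex of $\Delta_{(0,w)}$ associated to $\phi_v \in \cB_w$ (equivalently, the simplex attached to the top--dimensional standard pair of $\inow(I_{\rho(A)})$ from which the exponent of $\phi_v$ arises). By the contrapositive of the second bullet in Theorem~\ref{teo:conv}, the existence of an open domain of convergence for $\phi_v$ forces
\[
-b_0 \;\in\; \cK_\sigma .
\]
The key observation linking this to perturbations of $(1,\dots,1)$ is that the top row of $\rho(A)$ is $(1,1,\dots,1)$, so the Gale duality relation $\rho(A)\cdot B = 0$ gives $b_0 + b_1 + \dots + b_n = 0$; equivalently,
\[
(0,(1,\dots,1))\cdot B \;=\; \sum_{i=1}^n b_i \;=\; -b_0 .
\]

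Next, I would produce a perturbation of $w_1 := (1,\dots,1)$ whose image under $(0,\cdot)\cdot B$ lands in the \emph{interior} $\cK_\sigma^{\circ}$. Since $\cK_\sigma$ is full--dimensional (Lemma~7.1.16 of~\cite{GKZlibro}), choose any $p\in \cK_\sigma^{\circ}$; then for every $\delta\in(0,1)$ the convex combination
\[
(1-\delta)(-b_0) + \delta p \;=\; -b_0 + \delta(p+b_0)
\]
lies in $\cK_\sigma^{\circ}$. Because $\{b_1,\dots,b_n\}$ spans $\RR^{n-d}$ (indeed $b_0 = -\sum_{i\ge 1} b_i$ is itself in their span), we can write $\delta(p+b_0) = \sum_{i=1}^n \eta_i b_i$ for some vector $\eta\in\RR^n$, which can be taken arbitrarily small by shrinking $\delta$. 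Setting $w' := w_1 + \eta$, we obtain
\[
(0,w')\cdot B \;=\; -b_0 + \sum_{i=1}^n \eta_i b_i \;\in\; \cK_\sigma^{\circ}.
\]

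Finally, I would arrange the genericity. The set of $w'\in\RR^n_{>0}$ for which both $w'$ is a weight vector for $\HA$ and $(0,w')$ is a weight vector for $\HrA$ is open and dense (by the existence of the Gr\"obner fan, together with Remark~\ref{rmk:0w}); the condition $(0,w')\cdot B\in \cK_\sigma^{\circ}$ is also open. Intersecting these open conditions in an arbitrarily small neighbourhood of $w_1$ gives a $w'$ with all three properties, and in particular $w'$ lies in an open cone $\cC$ of the Gr\"obner fan for $\HA$ whose closure contains $w_1$, so $w'$ is a perturbation of $(1,\dots,1)$ in the sense of Definition~\ref{def:perturbation}. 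Since $(0,w')\cdot B\in \cK_\sigma^{\circ}$, the description of the secondary fan (Chapter~7 of~\cite{GKZlibro}) gives $\sigma\in \Delta_{(0,w')}$, completing the proof.

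The step I expect to be the main technical obstacle is the last one: ensuring that a single perturbation $w'$ simultaneously realizes all three open conditions (weight vector for $\HA$, weight vector for $\HrA$, and membership in $\cK_\sigma^{\circ}$). The argument above resolves it by observing that each of the three conditions is open at $w'$ and that the first two hold on a dense set, but one must still verify that the map $\eta\mapsto (0,\eta)\cdot B$ is an open mapping onto a neighbourhood of $-b_0$ inside $\RR^{n-d}$, which follows from the surjectivity of $\eta\mapsto \sum \eta_i b_i$.
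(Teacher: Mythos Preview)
Your proposal is correct and follows essentially the same route as the paper: use the contrapositive of the second part of Theorem~\ref{teo:conv} to obtain $-b_0=(0,1,\dots,1)\cdot B\in\cK_\sigma$, then perturb $(1,\dots,1)$ to a weight vector $w'$ with $(0,w')\cdot B\in\cK_\sigma^\circ$ and conclude $\sigma\in\Delta_{(0,w')}$. Your version spells out in detail the genericity and openness arguments needed to ensure $w'$ is a genuine perturbation in the sense of Definition~\ref{def:perturbation}, which the paper's proof asserts in a single sentence.
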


\begin{proof}
Using Theorem~\ref{teo:conv} and its notation, we have that, if
an element of the set $\cB_w$ from~\eqref{eqn:Bw} has an open domain
of convergence, then
$-b_0 = (0, 1, \dots, 1) \cdot B \in \cK_{\sigma}$. Therefore, there is a
perturbation $w'$ of  $(1,\dots,1)$ with $\sum_{i=1}^n w_i'b_i \in \cK_{\sigma}^\circ$.
It follows that $\sigma$ is a maximal simplex in the coherent triangulation of
$\rho(A)$ defined by $(0,w')$.
\end{proof}

\begin{example}\label{ejem: dibujitos}
Corollary~\ref{coro: dibujitos} allows us to decide
by inspecting the triangulations of $\rho(A)$, whether $\HA$ has
Nilsson solutions in the direction of a weight vector that do not have an
open domain of convergence.
Take for instance 
\[
A=\left[\begin{array}{ccccc}
2 & 0 & 1 & 2 & 2 \\
0 & 2 & 2 & 1 & 2\\
\end{array}\right]
\]
and consider the following coherent triangulations of $\rho(A)$, or, 
equivalently, the coherent triangulations of $A \cup \{0\}$.
Note that the triangulations $\Delta_i, \ i=1,\dots,4$ appearing in
Figure~\ref{figure 1}
\begin{figure}
 \begin{tabular}{cc}
  \includegraphics[scale=0.7]{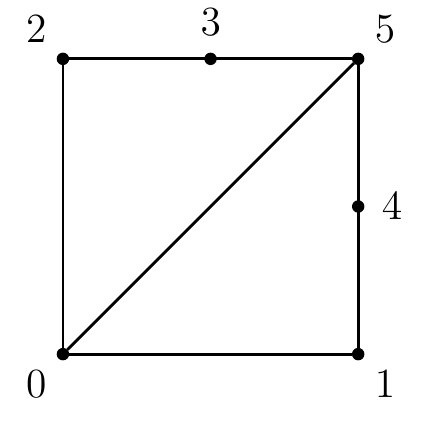}   & \includegraphics[scale=0.7]{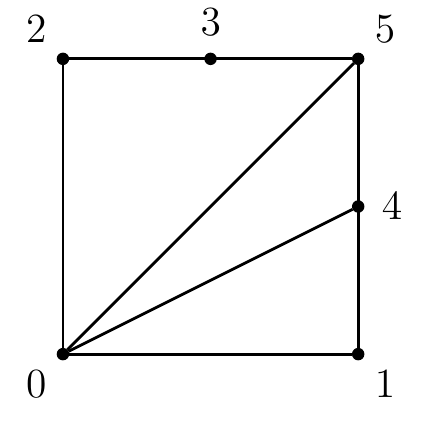}
    \\
     {\small $\Delta_1=\left\{\{0,1,5\},\{0,2,5\}\right\}$}   &
{\small $\Delta_2=\{\{0,1,4\},\{0,4,5\},\{0,2,5\}\}$}\\                 
  \includegraphics[scale=0.7]{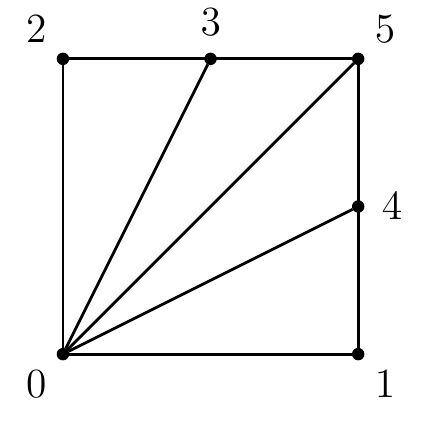}                                &
\includegraphics[scale=0.7]{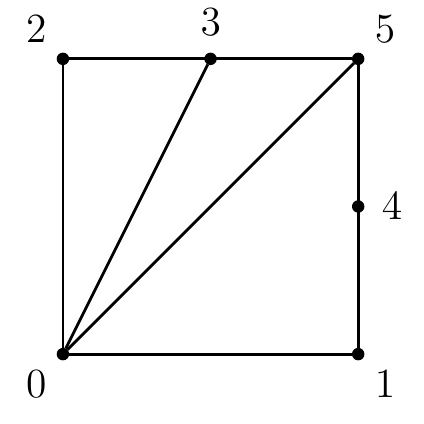}     \\
{\small $\Delta_3=\{\{0,1,4\},\{0,4,5\},\{0,3,5\},\{0,2,3\}\}$}   &
{\small $\Delta_4=\{\{0,1,5\},\{0,3,5\},\{0,2,3\}\}$ }               
 \end{tabular}
\caption{The coherent triangulations of $\rho(A)$ corresponding to
  perturbations of the vector $(0,1\dots,1)$ for Example~\ref{ejem: dibujitos}.}
\label{figure 1}
\end{figure} 
\begin{figure}
\begin{tabular}{c}
 \includegraphics[scale=0.7]{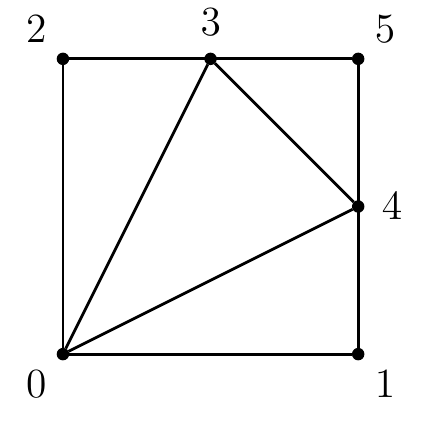}\\
{\small $\Delta_5=\{\{0,1,4\},\{0,3,4\},\{0,2,3\},\{3,4,5\}\}$}
\end{tabular}
\caption{A coherent triangulation of $\rho(A)$ 
from Example~\ref{ejem: dibujitos}.}
\label{figure 2}
\end{figure}
are all the triangulations 
induced by vectors $(0,w')$ with $w'$
a perturbation of $(1,\dots,1)$.
Now consider the triangulation $\Delta_5$ drawn in Figure~\ref{figure 2}. 
The simplex $\{0,3,4\}$ belongs to $\Delta_5$ and passes through zero,
but does not appear in any 
triangulation of $\rho(A)$ induced by a perturbation of $(0,1,\dots,1)$; therefore,
Corollary~\ref{coro: dibujitos} ensures that the corresponding Nilsson solutions
of $\HA$ (for generic $\beta)$ do not have open domains of convergence.
\end{example}

\section{The irregularity of $\HA$ via its Nilsson solutions}
\label{sec:Hotta}

In this section, we give an alternative proof 
of Corollary~3.16 in~\cite{SW} using our study of Nilsson solutions of
$\HA$. We assume as in~\cite{SW} that the columns of $A$ span a
strongly convex cone. 

In Theorem~\ref{thm:image of rho} we computed the image of $\rho$.
There is one case in which this map is guaranteed to be onto.

\begin{proposition}
\label{cor:surj-case}
Suppose that $w$ is a perturbation of $(1,\dots,1)$. Then the map
$\rho$ is surjective. 
\end{proposition}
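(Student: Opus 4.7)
By Theorem~\ref{thm:image of rho}, the map $\rho$ is surjective if and only if every basic Nilsson solution of $\HrA$ in the direction of $(0,w)$ is associated to a standard pair of $\inow(I_{\rho(A)})$ that passes through zero. The plan is to establish the stronger statement that \emph{every} standard pair $(\del^\alpha,\sigma)$ of the monomial ideal $\inow(I_{\rho(A)})$ satisfies $0\in\sigma$. Equivalently, I claim that $\del_0$ does not appear in any minimal generator of $\inow(I_{\rho(A)})$, or in other words that
\[
\inow(I_{\rho(A)}) \;=\; \inw(I_A)\cdot\CC[\del_0,\dots,\del_n].
\]
Granting this, the variety of $\inow(I_{\rho(A)})$ is a cylinder over the variety of $\inw(I_A)$ in the $\del_0$-direction, so every standard pair is forced to have $0\in\sigma$.

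To prove the identity, I would work with a reduced Gr\"obner basis $G=\{g_1,\dots,g_k\}$ of the toric ideal $I_A$ with respect to $w$ (refined by any tie-breaking monomial order). Each $g_i=\del^{a_i}-\del^{b_i}$ is a binomial with $\inw(g_i)=\del^{a_i}$, that is, $w\cdot a_i > w\cdot b_i$. Since $w$ is a perturbation of $(1,\dots,1)$ in the sense of Definition~\ref{def:perturbation}, the vector $(1,\dots,1)$ lies in the closure of the open cone containing $w$ on which the inequality $w'\cdot a_i > w'\cdot b_i$ persists; passing to the limit $w'\to(1,\dots,1)$ yields $|a_i|\geq|b_i|$. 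Consequently
\[
\rho(g_i) \;=\; \del^{a_i} - \del_0^{|a_i|-|b_i|}\,\del^{b_i},
\]
and, because the $(0,w)$-weights of its two monomials are $w\cdot a_i$ and $w\cdot b_i$ respectively, we obtain $\inow(\rho(g_i))=\del^{a_i}$, which does not involve $\del_0$.

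The main technical step, which I view as the principal obstacle, is verifying that $\rho(G)$ is itself a Gr\"obner basis of $I_{\rho(A)}=\rho(I_A)$ with respect to $(0,w)$; once this is in hand,
\[
\inow(I_{\rho(A)}) \;=\; \langle\inow(\rho(g_i)) : i=1,\dots,k\rangle \;=\; \langle\del^{a_1},\dots,\del^{a_k}\rangle \;=\; \inw(I_A)\cdot\CC[\del_0,\dots,\del_n],
\]
as required. I would verify the Gr\"obner basis property via Buchberger's $S$-pair criterion. Setting $e_\ell:=|a_\ell|-|b_\ell|\geq 0$, a direct computation yields the clean identity
\[
S(\rho(g_i),\rho(g_j)) \;=\; \del_0^{\min(e_i,e_j)}\,\rho(S(g_i,g_j)).
\]
Because $G$ is a Gr\"obner basis of $I_A$, the $S$-polynomial $S(g_i,g_j)$ admits a standard representation in terms of $G$, and the perturbation hypothesis on $w$ ensures that every summand of this representation can be homogenized with an appropriate power of $\del_0$ to produce a standard representation of $S(\rho(g_i),\rho(g_j))$ in terms of $\rho(G)$, thereby reducing it to zero. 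The careful bookkeeping of the $\del_0$-powers in this lifting step is the principal technical difficulty of the argument.
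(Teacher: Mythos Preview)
Your overall strategy matches the paper's exactly: show that every standard pair of $\inow(I_{\rho(A)})$ passes through zero by proving that $\del_0$ divides no minimal generator of this monomial ideal, then invoke Theorem~\ref{thm:image of rho}. The paper states the key fact about minimal generators in one line without details; you supply a detailed argument via Buchberger's criterion, which is correct in outline (your $S$-pair identity is right, and the lifting of the standard representation can indeed be carried out using the perturbation hypothesis).

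That said, the Buchberger route is heavier than necessary. A shorter argument combines two observations. First, your own computation already gives the inclusion $\inw(I_A)\subseteq\inow(I_{\rho(A)})$: each reduced-Gr\"obner-basis binomial $\del^{a_i}-\del^{b_i}$ of $I_A$ satisfies $|a_i|\ge|b_i|$ by the perturbation hypothesis, so $\inow(\rho(g_i))=\del^{a_i}$. Second, for the reverse direction, take any minimal monomial generator $\del_0^{c_0}\del^{c'}$ of $\inow(I_{\rho(A)})$, realize it as $\inow$ of a binomial $\del^c-\del^d\in I_{\rho(A)}$, and dehomogenize: $\del^{c'}-\del^{d'}\in I_A$ with $w\cdot c'>w\cdot d'$, hence $\del^{c'}\in\inw(I_A)\subseteq\inow(I_{\rho(A)})$. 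Minimality then forces $c_0=0$. This bypasses the $S$-pair bookkeeping entirely and is closer in spirit to the one-line assertion in the paper (and to Lemma~\ref{lema1}).
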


\begin{proof}
First note that none of the minimal generators of the monomial ideal
$\inow(I_{\rho(A)})$ 
have $\del_0$ as a factor. This implies that, if $u \in \NN^{n+1}$ and
$\del^u \notin \inow(I_{\rho(A)})$, then $\del_0^k \del^u \notin
\inow(I_{\rho(A)})$ for all $k \in \NN$. We conclude that all the
standard pairs of $\inow(I_{\rho(A)})$ pass through
zero. Theorem~\ref{thm:image of rho} completes the proof.
\end{proof}

We wish to find weight a vector $w$ for $\HA$ for which $\rho$
is not surjective. We require the following statement.

\begin{lemma}
\label{lematriang}
Let $A\in\ZZ^{d\times n}$ of full rank $d$ whose columns span a
strongly convex cone. If the row span of $A$ does not contain the
vector $(1, \dots, 1)$,
there exists $w\in\RR_{> 0}^n$ such that the coherent triangulation
$\Delta_{(0,w)}$ of $\rho(A)$ has a maximal simplex
that does not pass through zero. 
Given $\beta \in \CC^d$, the vector $w$ can be chosen to be a weight
vector for $\HA$. 
\end{lemma}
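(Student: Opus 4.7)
The plan is to produce $d{+}1$ affinely independent columns of $A$ whose index set $\sigma \subseteq \{1,\dots,n\}$ will serve as the vertex set of a maximal simplex of $\Delta_{(0,w)}$ not passing through zero, and to construct $w \in \RR_{>0}^n$ explicitly so that $\sigma$ is realized as a facet of the lower hull of the lift of the columns of $\rho(A)$ to heights $(0, w)$. A final perturbation upgrades $w$ to a weight vector for $\HA$ and promotes $\Delta_{(0,w)}$ to a genuine triangulation.

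First I would verify that $a_1, \dots, a_n$ affinely span $\RR^d$. Since $A$ has rank $d$, the $a_i$ linearly span $\RR^d$. If their affine span $V$ were a proper affine subspace, there are two cases: if $0 \in V$, then $V$ is a linear subspace containing the linear span, so $V=\RR^d$, a contradiction; if $0 \notin V$, then $V$ lies in an affine hyperplane $\{y \in \RR^d : h\cdot y = 1\}$ for some $h \in \RR^d$, giving $h\cdot a_i = 1$ for every $i$, so $(1,\dots,1) \in \rspan_\QQ(A)$, again contrary to hypothesis. Therefore the $a_i$ affinely span $\RR^d$, and I may select affinely independent $a_{i_0}, \dots, a_{i_d}$ and set $\sigma = \{i_0, \dots, i_d\}$.

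Strong convexity of the cone spanned by the columns of $A$ supplies $h \in \RR^d$ with $h\cdot a_i > 0$ for all $i$. Fix $c>0$ large enough that $c(h\cdot a_i) > 1$ for every $i \in \sigma$, and define
\[
w_i = c(h\cdot a_i) - 1 \text{ for } i \in \sigma, \qquad w_k = c(h\cdot a_k) \text{ for } k \notin \sigma,
\]
so that $w \in \RR_{>0}^n$. The lifted columns of $\rho(A)$ sit in $\RR^{d+2}$: the zeroth is $(0,1,0)$ and the rest are $(w_i, 1, a_i)$. The linear functional $\ell(t, y_0, y') = t - c(h \cdot y')$ then evaluates to $0$ on the lift of the zeroth column and on every lifted column with index outside $\sigma$, and to $-1$ on the lifts with indices in $\sigma$. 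Since $\ell$ has positive coefficient in the lift direction $t$ and attains its strict minimum exactly on the $d{+}1$ affinely independent lifts indexed by $\sigma$, those lifts form a $d$-dimensional facet of the lower hull. Projecting back, $\sigma$ is a maximal simplex of $\Delta_{(0,w)}$, and by construction $0 \notin \sigma$.

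To finish, I would perturb $w$ slightly in order to guarantee that $\Delta_{(0,w)}$ is a triangulation (not merely a subdivision) and that $w$ is a weight vector for $\HA$. The locus where $\sigma$ remains a maximal cell of the induced subdivision is a closed union of top-dimensional cones of the secondary fan of $\rho(A)$; its interior consists of $w$'s giving triangulations in which $\sigma$ is a maximal simplex, and it meets the open dense set of weight vectors for $\HA$ supplied by the Gr\"obner fan in the Weyl algebra. A sufficiently small generic perturbation of our $w$ lands in this intersection, completing the argument. The main obstacle is the construction of the supporting functional $\ell$ that isolates $\sigma$ as a lower facet; once $\ell$ is in hand, the affine-span analysis and the concluding perturbation step are routine consequences of the hypotheses and standard facts about the secondary and Gr\"obner fans.
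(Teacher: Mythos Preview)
Your proof is correct and follows the same overall strategy as the paper: choose a $(d{+}1)$-element subset $\sigma \subseteq \{1,\dots,n\}$, use strong convexity to obtain $h$ with $h\cdot A$ coordinatewise positive, build $w$ from $h$ so that $\sigma$ is forced to appear as a maximal simplex of $\Delta_{(0,w)}$, and then perturb. The two executions differ only in viewpoint. The paper works on the Gale-dual side: it picks $\sigma$ so that $\{b_i : i \notin \sigma\}$ is linearly independent (which, by Gale duality, is exactly your condition that the $a_i$ with $i \in \sigma$ are affinely independent), constructs $w$ by the same recipe (your choice is the special case $\lambda_0 = 1$, $\lambda_i = 1$ after rescaling $h$), and verifies $(0,w)\cdot B \in \cK_\sigma^\circ$ directly. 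You instead stay on the primal side, exhibiting the supporting functional $\ell(t,y_0,y') = t - c(h\cdot y')$ that carves out $\sigma$ as a lower facet of the lifted polytope. Your argument is slightly more self-contained in that it does not invoke the secondary-fan machinery, while the paper's version meshes with the Gale-dual language already set up in Section~\ref{sec:convergence}; substantively the two computations are dual to one another.
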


\begin{proof}
We use the description of the secondary fan of $\rho(A)$ from
Section~\ref{sec:convergence}. 

Let $B$ be a Gale dual matrix of $\rho(A)$ with rows
$b_0,\dots,b_n$. Since $(1, \dots, 1)$ is 
not in the rowspan of $A$, the zeroth row of $B$ is nonzero.
Because $B$ has full rank $n-d$, we can choose 
$\sigma \subset \{1,\dots n\}$ of cardinality $d+1$ such that 
$\{ b_i \mid i \not \in \sigma \}$ is linearly independent. 

The assumption that the columns $a_1,\dots,a_n$ of $A$ span a strongly
convex cone means that there exists a vector $h \in \RR^d$ such
that $h \cdot A$ is coordinatewise positive. As $\rho(A) \cdot B = 0$,  
$\sum_{i=1}^n (h \cdot a_i) b_i = 0$.
 
Choose $w \in \RR_{> 0}^n$ and positive real
$\lambda_i$ for $i \notin \sigma$, such that
\[
w_i + \lambda_0 = h \cdot a_i \; \text{ for } i \in \sigma\, , \;
\text{ and } w_i + \lambda_0 - \lambda_i = h \cdot a_i \; \text{ for } i
\notin \sigma \cup \{ 0 \}.
\]
There is enough freedom in the choice of $w$ that we may assume that
$(0,w)$ induces a triangulation $\Delta_{(0,w)}$ of $\rho(A)$ and not merely a
subdivision. This also implies that $w$ can be chosen a weight vector
for $\HA$, if $\beta \in \CC^d$ is given.

We claim that $(0,w)\cdot B \in \cK_{\sigma}^\circ$. This implies that
$\sigma$ is a maximal simplex in $\Delta_{(0,w)}$ which does not pass
through zero.

To prove the claim, note that 
\[
\sum_{i \in \sigma} (w_i+\lambda_0) b_i + 
\sum_{i \notin \sigma \cup \{ 0 \} } (w_i+\lambda_0-\lambda_i) b_i = 
\sum_{i=1}^n (h \cdot a_i) b_i = 0.
\]
Then 
\[
\sum_{i=1}^n w_i b_i 
= 
\sum_{i \notin \sigma \cup \{ 0 \}} (\lambda_i-\lambda_0) b_i - 
\lambda_0 \sum_{i \in \sigma} b_i 
= 
\sum_{i \notin \sigma \cup\{ 0 \} } \lambda_i b_i - \lambda_0 \sum_{i=1}^n b_i
= 
\sum_{i \notin \sigma} \lambda_i b_i ,
\]
where the last equality follows from $-b_0 = \sum_{i=1}^nb_i$. But
then, $\lambda_i > 0$ for $i \notin \sigma$ implies that
$(0,w)\cdot B \in \cK_{\sigma}^\circ$, which is what we wanted.
\end{proof}

The hypothesis that the columns of $A$ span a strongly convex cone
cannot be removed from
Lemma~\ref{lematriang}, as the following example shows.

\begin{example}\label{ejem: lematriang}
Let $A=[-1\; 1]$. Then
\[
\rho(A)=\left[\begin{array}{crc}
 1 & 1 & 1 \\
 0 & -1 & 1\\
\end{array}\right] ,\ \textrm{and choose } \ 
B=\left[\begin{array}{r} -2\\ 1 \\1 
\end{array}\right].
\]
There are only two coherent triangulations of $\rho(A)$, namely
\[
\Delta_1=\{\{-1,1\}\} \textrm{ and } \Delta_2=\{\{-1,0\},\{0,1\}\}. 
\]
Their
corresponding cones in the secondary fan are 
$\cK_{\Delta_1}=\RR_{\leq 0}$ and 
$\cK_{\Delta_2}=\RR_{\geq 0}$. For any vector
$w=(w_{-1},w_1)\in\RR_{> 0}^2$, the number 
$0\cdot b_0+w_{-1}\cdot b_{-1}+w_1\cdot b_1=w_{-1}+w_1$ 
belongs to the cone $\cK_{\Delta_2}^\circ=\RR_{> 0}$ and
consequently $w$ always induces a triangulation of $\rho(A)$ all of whose
maximal simplices pass through zero.
\end{example}

\begin{proposition}
\label{coro:non-surj}
Assume that the columns of $A$ span a strongly convex cone.
If the rational rowspan of $A$ does not contain the 
vector $(1, \dots, 1)$, there exists a
weight vector $w$ such that the linear map  
\[
\rho:\Fw(\HA) \rightarrow {\mathscr{N}}_{(0,w)}(\HrA)
\]
is not surjective, where $\beta_0$ be a homogenizing value for $A$,
$\beta$ and $w$.
\end{proposition}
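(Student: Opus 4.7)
The plan is as follows. Since the columns of $A$ span a strongly convex cone and $(1,\dots,1)$ does not lie in the $\QQ$-rowspan of $A$, Lemma~\ref{lematriang} furnishes a weight vector $w$ for $\HA$ such that the coherent triangulation $\Delta_{(0,w)}$ of $\rho(A)$ has at least one maximal simplex $\sigma$ with $0 \notin \sigma$. Since facets of $\Delta_{(0,w)}$ correspond to top-dimensional standard pairs of $\inow(I_{\rho(A)})$ (see Chapter~8 of~\cite{Stu}), I may pick a top-dimensional standard pair $(\del^{\alpha}, \sigma)$ of $\inow(I_{\rho(A)})$ with $0 \notin \sigma$.

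Next, I apply Lemma~\ref{lemma:fake-exps} to $\HrA$: for any $\beta_0 \in \CC$, there is a unique fake exponent $(v_0,v) \in \CC^{n+1}$ of $\HrA$ with respect to $(0,w)$ corresponding to $(\del^{\alpha}, \sigma)$. Since $0 \notin \sigma$, we have $v_0 = \alpha_0 \in \NN$ and $v_i = \alpha_i \in \NN$ for $i \notin \sigma \cup \{0\}$; the remaining coordinates $(v_i)_{i \in \sigma}$ are determined by the linear system $\rho(A) \cdot (v_0,v) = (\beta_0,\beta)$ and depend affinely on $\beta_0$ through the expansion $(1,0,\dots,0) = \sum_{i \in \sigma} c_i (1, a_i)$ in the basis $\{(1,a_i) \mid i \in \sigma\}$ of $\RR^{d+1}$.

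I then choose $\beta_0$ to be a homogenizing value for $A$, $\beta$, and $w$, and moreover sufficiently generic so that the fake exponent $(v_0,v)$ has minimal negative support. Such a $\beta_0$ exists because the conditions defining homogenizing values, together with those forcing $v_i \notin \ZZ_{<0}$ for the indices $i \in \sigma$ with $c_i \neq 0$, amount to avoiding countably many integer translates of proper affine subspaces of $\CC$; the identity $\sum_i c_i = 1$ guarantees that the family of $c_i$'s is not identically zero, so a generic perturbation of $\beta_0$ really does move the relevant coordinates. With this choice, Proposition~\ref{propo:phisubvlibrelog} applied to $\HrA$ produces a basic Nilsson solution $\phi_{(v_0,v)}$ of $\HrA$ in the direction of $(0,w)$, associated to the standard pair $(\del^{\alpha}, \sigma)$. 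Since $0 \notin \sigma$, Theorem~\ref{thm:image of rho} then implies that $\phi_{(v_0,v)}$ does not belong to the image of $\rho$, so $\rho$ is not surjective.

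The hard part will be justifying the genericity step: while coordinates $v_i$ with $c_i \neq 0$ become non-integer for generic $\beta_0$, the coordinates with $c_i = 0$ are independent of $\beta_0$ and cannot be controlled this way. One route is to carry out a direct shift analysis in $\ker_{\ZZ}(\rho(A))$ to show that any such forced negative coordinates already yield a minimal negative support. Alternatively, the conclusion can be reached without exhibiting a specific element outside the image of $\rho$: for generic enough $\beta_0$ the regular holonomic system $\HrA$ satisfies $\dim_\CC \cN_{(0,w)}(\HrA) = \rank(\HrA) \geq \vol(\rho(A))$, while Theorem~\ref{thm:image of rho} bounds the dimension of the image of $\rho$ by the number of top-dimensional standard pairs of $\inow(I_{\rho(A)})$ passing through zero, which (by the proof of Corollary~\ref{propo:degree computation}) equals $\sum_{\sigma \ni 0}\vol(\sigma) < \vol(\rho(A))$ whenever a simplex not through zero exists.
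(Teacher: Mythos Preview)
Your overall strategy matches the paper's: invoke Lemma~\ref{lematriang} to produce a weight vector $w$ for which $\Delta_{(0,w)}$ has a maximal simplex $\sigma$ with $0\notin\sigma$, take the associated fake exponent $(v_0,v)$ of $\HrA$ with $v_0=\alpha_0\in\NN$, and then exhibit a basic Nilsson solution of $\HrA$ whose zeroth coordinate is a nonnegative integer, so that Theorem~\ref{thm:image of rho} places it outside the image of~$\rho$. The difficulty, as you correctly flag, is turning the fake exponent into a \emph{true} exponent. Neither of your two proposed routes closes this gap.

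For your first route, you already see the problem: perturbing $\beta_0$ moves only those coordinates $v_i$ with $c_i\neq 0$, so you cannot force $\nsupp(v_0,v)=\varnothing$ in general, and your suggested ``direct shift analysis'' is exactly what is missing. For your second route, the dimension count breaks because the bound you quote is only valid for generic $\beta$: Theorem~\ref{thm:image of rho} does \emph{not} bound $\dim(\operatorname{image}\rho)$ by the number of top-dimensional standard pairs passing through zero. What it gives is $\dim(\operatorname{image}\rho)=\dim_\CC\Fw(\HA)$, and the identification of this with $\sum_{\sigma\ni 0}\vol(\sigma)$ (Corollaries~\ref{coro:dimnilsson} and~\ref{propo:degree computation}) uses Convention~\ref{remark:genericity of beta} on $\beta$. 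Since $\beta$ is fixed and possibly non-generic, rank-jumping can make $\dim_\CC\Fw(\HA)$ exceed $\sum_{\sigma\ni 0}\vol(\sigma)$, while $\rank(\HrA)$ may simultaneously exceed $\vol(\rho(A))$; the comparison collapses.

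The paper resolves this with a single clean step you are missing: rather than arranging minimal negative support by genericity, it invokes Proposition~3.4.16 of~\cite{SST} to pass from $(v_0,v)$ to some $(v_0',v')\in (v_0,v)+\ker_\ZZ(\rho(A))$ that \emph{is} a fake exponent with minimal negative support and satisfies $\nsupp(v_0',v')\subsetneq\nsupp(v_0,v)$. Since $0\notin\nsupp(v_0,v)$ and the shift is by an integer vector, $v_0'$ remains a nonnegative integer, so the standard pair attached to $(v_0',v')$ still avoids zero. Now Proposition~\ref{propo:phisubvlibrelog} yields the basic Nilsson solution $\phi_{(v_0',v')}$, and Theorem~\ref{thm:image of rho} finishes the argument. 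This works for \emph{every} $\beta$ and requires nothing of $\beta_0$ beyond being a homogenizing value.
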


\begin{proof}
Use Lemma~\ref{lematriang} to pick $w$ so that the triangulation
$\Delta_{(0,w)}$ of $\rho(A)$ has a maximal simplex
that does not pass through zero.
Then 
$\inow(I_{\rho(A)})$ has top-dimensional standard pairs that do not
pass through zero. Choose a homogenizing value $\beta_0$ for $A$,
$\beta$ and 
$w$, and let $(v_0,v)$
be a fake exponent of $\HrA$ corresponding to such a standard pair
(fake exponents associated to top-dimensional standard pairs always
exist). In 
particular, $v_0 \in \NN$. If $(v_0,v)$ has minimal negative support,
it is an exponent of $\HrA$ corresponding to a standard pair that does
not pass through zero, and the associated logarithm-free solution
$\phi_{(v_0,v)}$ of $\HrA$ cannot belong to the image of $\rho$ by
Theorem~\ref{thm:image of rho}.
If $(v_0,v)$ does not have minimal negative
support, the argument of Proposition~3.4.16 in~\cite{SST} produces an element
$ (v_0',v')\in ((v_0,v)+\ker_{\ZZ}(A))$
whose negative support is minimal and 
strictly contained in the negative support of $(v_0,v)$, so that $v'_0$ is still
a non negative integer. Thus, the standard pair corresponding to
$(v_0',v_0)$ cannot 
pass through zero, and $\phi_{(v_0',v')}$ is not in the image of $\rho$.
\end{proof}

The following result is due to Berkesch (see Theorem~7.3 in~\cite{Christine's-thesis}).

\begin{theorem}
\label{thm:rank-lifting}
If the cone over the columns of $A$ is strongly convex and $\beta_0$ is generic,
\[\rank(\HA) = \rank(\HrA).\]
\end{theorem}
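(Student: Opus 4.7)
My plan is to employ the Euler-Koszul homology framework of Matusevich, Miller and Walther~\cite{MMW}, which provides a homological characterization of the holonomic rank of an $A$-hypergeometric system as a sum of ranks of Euler-Koszul homology modules of the toric ring $S_A = \CC[\del]/I_A$:
\[
\rank(\HA) \;=\; \sum_{i \geq 0} \rank\bigl(\HH_i(E-\beta;\,S_A)\bigr),
\]
together with an analogous identity for $\rho(A)$. The strategy is to match these sums term by term when $\beta_0$ is generic.

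The first step is to observe that $S_{\rho(A)}$ is a $\ZZ\rho(A)$-graded polynomial extension of $S_A$ by the variable $\del_0$. Since $I_{\rho(A)} = \rho(I_A)$ is the $\del$-homogenization of $I_A$, the variable $\del_0$ is a nonzerodivisor modulo $I_{\rho(A)}$, and setting $\del_0 \mapsto 1$ recovers $I_A$, and hence $S_A$, up to the appropriate regrading. Correspondingly, the homogenizing Euler operator $E_0^{\rho(A)} = \theta_0 + \theta_1 + \cdots + \theta_n$ encodes total $\del$-degree, while the remaining Euler operators of $\rho(A)$ coincide with those of $A$. This decomposition furnishes a short exact sequence of Euler-Koszul complexes comparing $\KK_\bullet(E-(\beta_0,\beta);\,S_{\rho(A)})$ with $\KK_\bullet(E-\beta;\,S_A)$, whose associated long exact sequence has connecting maps governed by the action of $E_0^{\rho(A)} - \beta_0$.

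The heart of the argument is then to show that, for generic $\beta_0$, the connecting maps in this long exact sequence are isomorphisms, so that
\[
\HH_i\bigl(E-(\beta_0,\beta);\,S_{\rho(A)}\bigr) \;\cong\; \HH_i\bigl(E-\beta;\,S_A\bigr) \quad \text{for every } i \geq 0.
\]
Here is where strong convexity of the column cone of $A$ plays its essential role: it forces the quasidegree sets $\qdeg\bigl(\HH_i(E-(\beta_0,\beta);\,S_{\rho(A)})\bigr)$ to be proper Zariski-closed subsets of $\CC^{d+1}$ whose intersections with the line $\{(\beta_0,\beta)\mid \beta_0\in\CC\}$ are finite. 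For $\beta_0$ avoiding this finite exceptional set (in addition to the homogenizing-value condition already present), multiplication by $E_0^{\rho(A)} - \beta_0$ is invertible on the relevant graded pieces of each Euler-Koszul homology module, making the connecting maps bijective. Summing the resulting equalities of holonomic ranks via the MMW formula then yields $\rank(\HA) = \rank(\HrA)$.

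The main obstacle will be the invertibility step above: one must carefully track how the quasidegree arrangements of the Euler-Koszul homologies behave under the homogenization $\rho$, and verify that strong convexity of the cone over the columns of $A$ is precisely the hypothesis that guarantees the required finiteness of these arrangements along the relevant affine line, so that a single generic choice of $\beta_0$ suffices simultaneously for all homological degrees $i$.
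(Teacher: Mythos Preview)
The paper does not supply its own proof of this theorem: it is attributed to Berkesch (Theorem~7.3 of~\cite{Christine's-thesis}), whose thesis is precisely about Euler--Koszul methods, so your choice of framework coincides with hers and is the right one.

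Two technical points in your outline need repair, however. First, the displayed rank formula is not what \cite{MMW} proves: in their setup $\HH_0(E-\beta;S_A)$ \emph{is} $D/\HA$, so $\rank(\HA)=\rank\HH_0$ on the nose, while the alternating sum $\sum_i(-1)^i\rank\HH_i$ equals $\vol(A)$ and the higher homologies govern the rank \emph{jump}. This is not fatal to your plan---matching $\rank\HH_0$ on both sides already suffices---but the formula as written is false. Second, and more seriously, $S_{\rho(A)}$ is \emph{not} a polynomial extension $S_A[\del_0]$: already for $A=[1\ 2]$ one has $S_{\rho(A)}=\CC[\del_0,\del_1,\del_2]/(\del_1^2-\del_0\del_2)$ versus $S_A[\del_0]=\CC[\del_0,\del_1,\del_2]/(\del_1^2-\del_2)$. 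What does hold is that $\del_0$ is a nonzerodivisor on $S_{\rho(A)}$ with $S_{\rho(A)}/(\del_0-1)\cong S_A$; but $\del_0-1$ is not $\rho(A)$-homogeneous, so it does not induce a map of Euler--Koszul complexes in the $\ZZ\rho(A)$-graded category where those complexes live. The homogeneous sequence $0\to S_{\rho(A)}(-e_0)\xrightarrow{\del_0}S_{\rho(A)}\to S_{\rho(A)}/\del_0 S_{\rho(A)}\to 0$ is available, but its cokernel is the associated graded of $S_A$ for the degree filtration, not $S_A$ itself. Bridging this gap, and tracking how the quasidegrees of the local cohomology of $S_A$ and $S_{\rho(A)}$ correspond along the line $\{(\beta_0,\beta):\beta_0\in\CC\}$, is where the actual argument (and the strong convexity hypothesis) does its work.
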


We now give a proof for Theorem~\ref{ultimoteorema}. 

\begin{proof}[Proof of Theorem~\ref{ultimoteorema}]
Given $\beta_0$ a homogenizing value for $A$,
$\beta$ and $w$,
Proposition~\ref{cor:surj-case} states that
the spaces $\Fw(\HA))$ and $\mathscr{N}_{(0,w)}(\HrA))$ are isomorphic.
We may further assume that $\beta_0$ is sufficiently generic that
Theorem~\ref{thm:rank-lifting} holds.

As $I_\rho(A)$ is homogeneous, $\HrA$ is regular holonomic, 
and~\cite[Corollary~2.4.16]{SST} implies that  
$\dim_{\CC}(\mathscr{N}_{(0,w)}(\HrA)) = \rank (\HrA)$. So, 
\begin{align*}
\dim_{\CC}(\Fw(\HA))) & = \dim_{\CC}(\mathscr{N}_{(0,w)}(\HrA))) \\
& = \rank(\HrA)=\rank(\HA),
\end{align*} 
where the last equality is by Theorem~\ref{thm:rank-lifting}.

Since $w$ is a perturbation of $(1,\dots,1)$, if $\phi$ is a
basic Nilsson solution of $\HA$ in the direction of $w$, then $|u|\geq 0$
for all $u \in \supp(\phi)$. Therefore we can use
Theorem~\ref{ultimolema} to find $\be \in \RR_{>0}^{n-d}$ such that
all basic Nilsson solutions 
of $\HA$ in the direction of $w$ converge on $\cU_{w,\be}$.
\end{proof}

We are finally ready to show that, in the case when the columns of $A$
span a strongly convex cone, non homogeneous $A$-hyper\-geo\-metric
systems are irregular for all $\beta$, thus
generalizing the argument in Theorem~2.4.11 from~\cite{SST}, 
and providing an alternative proof of Corollary~3.16 from~\cite{SW}.

\begin{theorem} 
\label{coro:lower rank initial}
Assume that the columns  of $A$ span a strongly convex cone and $I_A$ is
not homogeneous. Then $\HA$ is not regular
holonomic for any $\beta \in \CC^d$.
\end{theorem}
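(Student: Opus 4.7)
The plan is to argue by contradiction. Assuming that $\HA$ is regular holonomic for some $\beta \in \CC^d$, I will produce a weight vector $w$ for $\HA$ for which the space $\Fw(\HA)$ of formal Nilsson solutions in the direction of $w$ has dimension strictly smaller than $\rank(\HA)$; this will contradict the general fact that the formal Nilsson solutions of a regular holonomic $A$-hypergeometric $D$-ideal in the direction of any weight vector span a space of dimension equal to the rank (cf.\ Corollary~2.4.16 in~\cite{SST}).

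Since $I_A$ is not homogeneous, the vector $(1,\dots,1)$ is not in the $\QQ$-rowspan of $A$. I would first apply Lemma~\ref{lematriang} to obtain a weight vector $w$ for $\HA$ such that the coherent triangulation $\Delta_{(0,w)}$ of $\rho(A)$ has a maximal simplex that does not pass through zero. Next, I would choose $\beta_0 \in \CC$ simultaneously a homogenizing value for $A$, $\beta$, $w$ (Definition~\ref{defi:homog-value}) and sufficiently generic for Theorem~\ref{thm:rank-lifting} to apply, so that $\rank(\HA) = \rank(\HrA)$.

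The key step is to exhibit an element of $\mathscr{N}_{(0,w)}(\HrA)$ outside the image of the injective map $\rho: \Fw(\HA) \to \mathscr{N}_{(0,w)}(\HrA)$ from Theorem~\ref{thm:image of rho}. By construction of $w$, some top-dimensional standard pair $(\del^{\alpha}, \sigma)$ of $\inow(I_{\rho(A)})$ has $0 \notin \sigma$, and the argument of Proposition~\ref{coro:non-surj} (replacing the associated fake exponent by one of minimal negative support if necessary) produces a basic Nilsson solution of $\HrA$ that does not lie in the image of $\rho$, by the description of that image in Theorem~\ref{thm:image of rho}. Consequently
\[
\dim_\CC(\Fw(\HA)) < \dim_\CC(\mathscr{N}_{(0,w)}(\HrA)).
\]
Since $I_{\rho(A)}$ is homogeneous, $\HrA$ is regular holonomic, and hence $\dim_\CC(\mathscr{N}_{(0,w)}(\HrA)) = \rank(\HrA) = \rank(\HA)$, whence $\dim_\CC(\Fw(\HA)) < \rank(\HA)$.

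To close the argument, if $\HA$ were regular holonomic, Corollary~2.4.16 of~\cite{SST} applied to $\HA$ would yield $\dim_\CC(\Fw(\HA)) = \rank(\HA)$, contrary to the strict inequality above. The main technical subtlety I anticipate is verifying that the simultaneous genericity constraints on $\beta_0$ (homogenizing for $A,\beta,w$ and generic for rank-lifting) can indeed be met uniformly; each such condition excludes only a locally finite arithmetic/algebraic subset of $\CC$, so a suitable $\beta_0 \notin \ZZ$ always exists.
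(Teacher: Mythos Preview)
Your proposal is correct and follows essentially the same approach as the paper: both arguments choose $w$ via Lemma~\ref{lematriang}/Proposition~\ref{coro:non-surj} so that $\rho$ fails to be surjective, combine the resulting strict inequality $\dim_\CC(\Fw(\HA)) < \dim_\CC(\mathscr{N}_{(0,w)}(\HrA))$ with the regularity of $\HrA$ (Corollary~2.4.16 in~\cite{SST}) and Theorem~\ref{thm:rank-lifting} to obtain $\dim_\CC(\Fw(\HA)) < \rank(\HA)$, and then invoke Corollary~2.4.16 of~\cite{SST} once more to conclude. Your remark about the compatibility of the genericity conditions on $\beta_0$ matches the paper's parenthetical observation that making $\beta_0$ more generic for Theorem~\ref{thm:rank-lifting} does not disturb Proposition~\ref{coro:non-surj}.
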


\begin{proof}

Choose $w$ a weight vector for $\HA$ as in
Proposition~\ref{coro:non-surj} and $\beta_0$ a homogenizing value
for $A$, $\beta$ and $w$.
Then
\begin{align*}
\dim_{\CC}(\Fw(\HA))    & < \dim_{\CC}({\mathscr{N}}_{(0,w)}(\HrA)) \\
                  & = \rank(\HrA) \\
                  & = \rank(\HA).
\end{align*}
The equality in the second line
follows from Corollary~2.4.16 in~\cite{SST} because the system $\HrA$ is
regular holonomic, as $I_{\rho(A)}$ is homogeneous.
The last equality is by
Theorem~\ref{thm:rank-lifting} (we may have to make $\beta_0$ more
generic for this result to hold, but this does not affect
Proposition~\ref{coro:non-surj}). Now Corollary~2.4.16 in~\cite{SST} and
$\dim_{\CC}(\Fw(\HA)) < \rank(\HA)$ imply that $\HA$ is
not regular holonomic. 
\end{proof}


\def\cprime{$'$}

\end{document}